\pdfoutput=1
\documentclass[11pt, a4paper, oneside, reqno]{elsarticle}
\usepackage[english]{babel}
\linespread{1.00}
\usepackage[T1]{fontenc}
\usepackage{soul}

\usepackage[usenames,dvipsnames]{xcolor}
\usepackage[colorlinks=true,linkcolor=NavyBlue,urlcolor=RoyalBlue,citecolor=PineGreen,
	hypertexnames=false]{hyperref}
\usepackage{amsfonts}
\usepackage{amsmath}
\usepackage{amsthm}
\usepackage{stmaryrd}
 \usepackage{mathpazo}
 \usepackage[scaled]{beramono}
\usepackage{mathtools}
\usepackage{bm}
\usepackage{mathbbol}
\usepackage{shuffle}
\usepackage{rotating}
\usepackage{tikz}
\usepackage{tikz-cd} \usepackage{subfigure}
\usepackage{verbatim}
\usepackage[margin=0.7in]{geometry}
\usepackage[font=small]{caption}
\usepackage{adjustbox}
\usepackage{enumerate}
\usepackage{cleveref}
\usepackage{svg}
\usepackage{mathabx}
\usepackage{tcolorbox}

\def\cS{\mathcal{S}}


\def\bS{ {\bf S}}
\def\bT{{\bf T}}

\def\1{\mathbf{1}}

\def\a{\mathfrak{a}}

\newcommand{\x}{{\bf x}}
\theoremstyle{theorem}
\newtheorem{theorem}{Theorem}[subsection]
\newtheorem{example}[theorem]{Example}
\newtheorem{proposition}[theorem]{Proposition}
\newtheorem*{proposition*}{Proposition}
\newtheorem{corollaire}[theorem]{Corollary}
\newtheorem*{theorem*}{Theorem}
\newtheorem{lemma}[theorem]{Lemma}
\newtheorem{remark}[theorem]{Remark}
\newtheorem{definition}[theorem]{Definition}

\newtheorem{definitionproposition}[theorem]{Definition-Proposition}

\newcommand{\unit}{{\rm I}}
\newcommand{\op}{{\mathcal{P}}}
\newcommand{\CC}{{\mathbb{C}}}
\newcommand{\algop}{{\mathbb{C}\!\left[\op\right]}}

\newcommand{\mult}{\centerdot}

\newcommand{\liediff}[2]{\llbracket #1, #2\rrbracket}
\newcommand{\lieinv}[2]{[#1, #2]_{\mult}}

\newcommand{\End}{\rm End}

\newcommand{\coloni}{\,\colon}

\newcommand{\g}{\mathfrak{g}}
\newcommand{\univalgebra}{{\mathcal{U}(\g)}}
\newcommand{\preliediff}{\vartriangleleft\!\!\!\vartriangleleft}

\newcommand{\antipode}{\mathcal{S}}
\usepackage{amssymb}

\makeatletter
\newcommand{\subjclass}[2][2020]{%
  \let\@oldtitle\@title%
  \gdef\@title{\@oldtitle\footnotetext{#1 \emph{Mathematics subject classification:} #2}}%
}
\newcommand{\keywords}[1]{%
  \let\@@oldtitle\@title%
  \gdef\@title{\@@oldtitle\footnotetext{\emph{Key words and phrases:} #1.}}%
}
\makeatother

\title{Post-Hopf algebras and non-commutative probability theory}

\author[1]{Nicolas Gilliers}
\affiliation[1]{organization={New York University Abu Dhabi, Division of Science, Mathematics}, city = {Abu Dhabi}, country = { United Arab Emirates}}
\ead{nag9000@nyu.edu}
\date{\vspace{-5ex}}

\subjclass[]{17B60,46L53}
\keywords{Non-commutative probability, post-Lie algebras, Post-Groups, Kupershmidt's operators}

\begin{document}

\begin{abstract}
We study $\mathcal{O}$-operators and post-Lie products over the same Lie algebra compatible in a certain sense. We prove that the group product corresponding to the formal integration of the Lie algebra, which is adjacent to the sum of two compatible post-Lie products, can be factorized in a way that is reminiscent of the classical Semenov-Tian-Shanskii factorization. In the second part, we explore applications in non-commutative probability. We introduce new transforms that facilitate the computation of conditionally free and conditionally monotone multiplicative convolutions involving operator-valued non-commutative distributions.
\end{abstract}

\maketitle

\tableofcontents

\section{Introduction}

This article explores relations between operator-valued (conditionally) free multiplicative convolution of non-commutative distributions and the theory of operated sets. We prove a factorisation that is satisfied by matching (Hopf, group) $\mathcal{O}$-operators (whose definitions are original to this work), which is a generalisation of the well-known \emph{Semenov-Tian-Shanskii factorisation} and discuss \emph{its implications to non-commutative probability theory}. This work continues the research presented in \cite{ebrahimi2021twisted} and builds upon the observation that a post-Lie structure arises in the computation of the free multiplicative convolution of the two non-commutative distributions.

Voiculescu, in his 1987 paper \cite{voiculescu1987multiplication}, introduced the $T$-transform as a non-commutative analogue to the classical Fourier-Melin transform. The goal was to give an algorithmic solution to computing the spectral distribution of the self-adjoint operator $\sqrt{a}b\sqrt{a}$, where $a \geq 0$ and $b \geq 0$ are two free operators. Dykema extended Voiculescu's work in 2006 \cite{dykema2006stransform} and defined the operator-valued $T$-transform of a non-commutative distribution. 
Dykema observed a curious factorisation of this transform over the multiplication of two random variables free with amalgamation, known as the twisted factorisation of the $T$-transform \cite{dykema2006stransform,ebrahimi2021twisted}, which was interpretated in \cite{ebrahimi2021twisted} as a group product adjacent to a post-Lie operation. The second observation that we make is the peculiar structure of this post-Lie operation; it is the sum of two other post-Lie operations \emph{compatible in a certain sense}. To the best of our knowledge, this is the first example of such pair of post-Lie operations and we start their study in the first part of the present work. We explain how we can use an abstract extension of Dykema's $T$-transform to perform computations in the realm of deformations of post-Lie algebras. 

In the second part, we then use this abstract formalism to study operator-valued conditionally free and operator-valued conditionally monotone multiplicative convolutions of non-commutative distributions. We refer to the works of Młotkowski and Bożejko \cite{mlotkowski2002operator,bozejko1996convolution} on operator-valued conditionally free convolutions and the work of Popa \cite{popa2012realization} on operator-valued conditionally monotone multiplicative convolutions.

Our work also delves into Rota-Baxter operators and their generalisations introduced by Kupershmidt, known as $\mathcal{O}$-operators. Baxter initially introduced these operators in the realm of probability theory, following the work by Spitzer. Rota formalised the notion of what is nowadays known as Rota-Baxter operator on an associative algebra. This ties in with the Rota-Baxter operators on a Lie algebra already considered in \cite{semenov1983classical} in the context of the classical Yang--Baxter equation. Kupershmidt studied a class of generalised Rota--Baxter operators defined over Lie modules, which we refer to as Lie $\mathcal{O}$-operators and are of prime interest in this work. Recently, there has been interest in developing a theory of operated groups, starting with the work by Guo et al. \cite{guo2021integration}. This theory was further developed in \cite{bardakov2021rota} and \cite{bardakov2022rota}, drawing connections with the quantum Yang-Baxter equation using the theory of (skew-left) braces. These structures appear also in our work. Various algebraic structures accompany a Lie $\mathcal{O}$-operator, which we refer to as adjacent structures. For instance, to any  Lie $\mathcal{O}$-operator corresponds a post-Lie algebra \cite{curry2019post}. Post-Lie algebras are transversal to many fields in mathematics and are in particular central to the theory of numerical integration of differential equations on homogeneous spaces, with vanishing torsion. It is our intention here to demonstrate their utility in the realm of non-commutative probability as well.

Let us now be a bit more precise about our contributions.
A post-Lie algebra yields a second Lie algebra besides the "controlling'' one of the post-Lie operation (which is the standard Jacobi bracket of vector fields in differential geometry and this additional bracket equals the curvature). One notable feature is the possibility of realising the universal envelope of this additional bracket over the universal envelope of the controlling Lie algebra (see for example \cite{jacques2023post} for an important application to multiindices). For free pre- and post-Lie algebras, the duals of the resulting Hopf algebras are known as Butcher--Connes--Kreimer (BCK) and Munthe-Kaas--Wright (MKW) Hopf algebras, respectively. Building the envelope of a post-Lie algebra implies extending the post-Lie product to polynomials on Lie elements; this is done by unfolding the \emph{Guin--Oudom recursion} \cite{oudom2008lie}. It results in an operation defined on the universal envelope of the controlling Lie algebra constitutive of a \emph{$D$-algebra} \cite{al2022algebraic} -- or \emph{Post-Hopf algebra} \cite{li2022post, li2024sub, catoire2024cartier}. The author in \cite{goncharov2021rota} introduced and studied the notion of Rota--Baxter operator on a cocommutative Hopf algebra; we call them Rota--Baxter--Hopf operators. Following Kupershmidt, we propose the notion of a Hopf $\mathcal{O}$-operator. In a sense, it is the operator form of a Post-Hopf algebra like a Lie $\mathcal{O}$-operator is the operator form of a post-Lie algebra and group $\mathcal{O}$-operator is the operator form of a Post-Group. In this operator form, and this is the primary observation of the present work, \emph{unfolding the Guin--Oudom recursion is equivalent to exponentiating with respect to an associative product} (denoted $\#$ in this work; this is not to be confused with the compositional product of \cite{agrachev1980chronological})) the Lie $\mathcal{O}$-operator. 

We propose a definition of a matching Lie $\mathcal{O}$-operator. It is, in particular, a pair of a Lie $\mathcal{O}$-operator satisfying to some compatibility, implying that the sum of these two operators is a Lie $\mathcal{O}$-operator. We prove that such a pair is a commutative pair with respect to $\#$ and consequently that their exponential factorises; and we explain how this factorisation can be viewed as an abstract form of the Semenov-Tian-Shanskii factorisation.

We explore applications in non-commutative probability of this product $\#$ and matching $\mathcal{O}$-operators, focusing on free and monotone conditional probability. We aim to identify a matching group $\mathcal{O}$-operator that governs the relations between the various transforms used to calculate non-commutative convolutions. 
In particular, we manage to cast almost all relations between the various transforms in non-commutative probability as \emph{double inversions with respect to Post-Group products}. By using our new formalism and the concept of crossed product of groups, we propose a definition for the conditional operator-valued $T$-transform and prove that it factorises over the product of conditionally free with amalgamation random variables.

Let $(\a, \vartriangleleft, \g)$ be a Lie module. 
We list our contributions, first for the abstract part, 
\begin{enumerate}
\item We define a product $\#$ (see Section \ref{sec:gogroup}) and prove that its restriction to co-algebraic maps (this yields a group which we call the Guin--Oudom group) between the envelopes of $\a$ and $\g$ yields an algebraic Lie group, which turns out to be itself a $\mathcal{O}$-group. We give properties of the subset of Hopf $\mathcal{O}$-operators with respect to this product.
\item We show how to recover the Semenov--Tian--Shanskii (see Theorem \ref{thm:semenovtianshanskii}) factorisation from the commutativity of two Rota--Baxter--Hopf operators)
\item We define the notion of \emph{matching Hopf $\mathcal{O}$-operators} (see Section \ref{sec:MatchOope}) and provide several examples.
\end{enumerate}
Secondly, for applications to the operator-valued non-commutative probability theory:
\begin{enumerate}
\item We define the conditional $T$-transform and prove the transform factorises over the product of conditional free random variables in Theorem \ref{thm:factorisationconditionalttransform},
\item We define the conditional $H$-transform and prove the transform factorises over the product of conditional monotone random variables in Theorem \ref{thm:conditionalhtransform}
\item We state subordination equations in Theorem \ref{thm:subordinatoin}.
\end{enumerate}

\section{Background}
\label{sec:bckgnd}

\subsection{Notations}
\label{sec:notation}

In this work, the letters $H$ and $K$ consistently denote \emph{co-commutative Hopf algebras}. We use subscripts to distinguish between structural morphisms in the case where there is more than one Hopf algebra at stake, for example $\eta_H$ (unit), $\Delta_H$ (coproduct), $\varepsilon_H$ (counit),\ldots We use ${\rm Prim}(H)$ to denote the Lie algebra of the \emph{ primitive elements of a Hopf algebra $H$},
$$
{\rm Prim} (H) = \{ X \in H\colon \Delta_H(x)=x\otimes \eta_{H}+\eta_H \otimes x \}.
$$
We generally reserve small-case letters to denote "infinitesimal" objects, such as Lie--Rota--Baxter operators (defined further down) or Lie algebras. Uppercase letters denote "integrated" objects, such as universal envelopes, Hopf algebras, etc.


\subsection{Basic settings} 
\label{sec:setting}

For completeness, several basic definitions are recalled in the following.

\begin{enumerate}[\hspace{0.65cm} $\bullet$]

\item A \emph{right Lie module} is a triple $(\mathfrak{g},\vartriangleleft, \mathfrak{a})$ where

\begin{enumerate}

 \item $\mathfrak{g}$ and $\mathfrak{a}$ are Lie algebras,

 \item $\vartriangleleft$ is an action of the Lie algebra $\mathfrak{a}$ on the Lie algebra $\mathfrak{g}$ by derivations:
$$
\begin{aligned}
	[x,y]_{\g} \vartriangleleft a 
    &= [x \vartriangleleft a,y]_{\g} + [x,y \vartriangleleft a]_{\g},\quad
    x \vartriangleleft [a,b]_{\mathfrak{a}} 
    &= [(\cdot)\vartriangleleft a, (\cdot)\vartriangleleft b]_{\textrm{End}(\g)}(x),
\end{aligned}
$$
for any $x,y \in \g, a,b \in \mathfrak{a}$.
\end{enumerate}
A morphism between two right Lie modules $(\g,\vartriangleleft,\a)$ and $(\g^{\prime}, \vartriangleleft^{\prime},\a^{\prime})$ is a pair $(\varphi, \theta)$ of morphisms of Lie algebras $\varphi:\g\to\g^{\prime}$ and $\theta:\a\to\a^{\prime}$ such that for any $x\in \g$ and $a\in\a$:
$$
    \varphi(x) \vartriangleleft' \theta(a) = \varphi(x\vartriangleleft a).
$$

\item A \emph{right group module} is a triple $(G,\cdot,\Gamma)$ where
\begin{enumerate}
    \item $G$ and $\Gamma$ are groups,
    \item $\cdot$ is a right action of $\Gamma$ on $G$ by automorphisms:
        \begin{align*}
	&(xy) \cdot a 
        = (x\cdot a)(y\cdot a), \quad (x\cdot a) \cdot b = x \cdot (ab), \quad x \cdot 1 = x
        \end{align*}
for any $x,y \in G,~ a,b \in \Gamma$.
\end{enumerate}
A morphism between two right group modules $(G,\cdot,\Gamma)$ and $(G^{\prime}, \cdot^{\prime},\Gamma^{\prime})$ is a pair $(\Phi, \Theta)$ of group morphisms $\Phi:G\to G^{\prime}$ and $\Theta:\Gamma\to\Gamma^{\prime}$ such that for any $x \in G$ and $k \in \Gamma$:
$$
    \Phi(h) \cdot^{\prime} \Theta(k) = \Phi(h\cdot k).
$$    \item A \emph{right Hopf module} is a triple $(H,\preliediff,K)$ where
    \begin{enumerate}
	\item $(H,\Delta_{H},\eta_{H},\mathcal{S}_H)$ and $(K,\Delta_{K},\eta_{K},\mathcal{S}_K)$ are \emph{cocommutative} Hopf algebras.
	\item $\preliediff\colon H\otimes K \to H$ is a right Hopf action of $K$ on $H$, which means,  that $\preliediff$ is a morphism of coalgebras, using Sweedler's notations,
	\begin{align}
	(h_{(1)} \preliediff k_{(1)})\otimes(h_{(2)} \preliediff k_{(2)}) 
            &=(h\preliediff k)_{(1)} \otimes (h \preliediff k)_{(2)} \label{eqn:rightPost-Hopfone}\\
	\varepsilon_{H}(h \preliediff k) &=  \varepsilon_H(h)\varepsilon_K(k)\label{eqn:rightPost-Hopftwo}
	\end{align}
	and in addition, we require:
	    \begin{align}
	    (h \preliediff k_1) \preliediff k_2 &= h \preliediff (k_1k_2)  \label{eqn:rightPost-Hopfsix}\\
	(h_1h_2) \preliediff k &= (h_1 \preliediff k_{(1)})(h_2\preliediff k_{(2)}) \label{eqn:rightPost-Hopfthree}\\
        		h \preliediff 1 &= h \label{eqn:rightPost-Hopffour}\\ 
	(1 \preliediff k) &=  1 \preliediff \varepsilon_K(k)  	 \label{eqn:rightPost-Hopffive}
	\end{align}
	for any $h,h_1,h_2 \in H,~ k,k_1,k_2 \in K$.
    \end{enumerate}
\begin{remark}
The axioms of a right Hopf module together with \ref{eqn:rightPost-Hopfthree} imply that, for any $h_1, h_2 \in H$: 
$$
\antipode_H(h_1 \preliediff h_1) = \antipode_H(h_1) \preliediff h_2
$$
And also that the right multiplication map $\alpha_{\preliediff} \in {\rm End}(K,{\rm End}(H,H))$ is invertible respectively to the convolution product with inverse $\beta_{\preliediff}$ given by
\begin{equation}
\beta_{\preliediff}(k)(h) = h \preliediff S_{K}(k), \quad h,k \in H.
\end{equation}
It is not difficult to prove that our set our axioms is in fact equivalent to requiring that $\preliediff$ is a morphism of coalgebras that \eqref{eqn:rightPost-Hopfsix}, \eqref{eqn:rightPost-Hopfthree} hold and $\alpha_{\preliediff}$ is invertible \cite{catoire2024cartier}. 
\end{remark}
A morphism between right Hopf modules, $(H,\preliediff,K)$ and $(H^{\prime}, \preliediff^{\prime},K^{\prime})$, is a pair $(\Phi, \Theta)$ of Hopf algebra morphisms, $\Phi:H\to H^{\prime}$ and $\Theta:K\to K^{\prime}$, such that for any $h\in H$ and $k\in K$,
$$
    \Phi(h) \preliediff^{\prime} \Theta(k) 
    = \Phi(h \preliediff k).
$$
\end{enumerate}

We explain the notion of \emph{universal envelope of a right Lie module} $(\g,\vartriangleleft, \a)$. This is an object $(H,\preliediff, K)$ in the category of right Hopf modules together with an injection $(\iota_\g,\iota_\a)$ of right Lie modules:
$$
    \iota_\g:\g \to {\rm Prim}(H),
    \qquad
    ~\iota_\a :\a \to {\rm Prim}(K),
$$
which is the solution to the following universal problem: let $(H^{\prime},\preliediff,K^{\prime})$ be a right Hopf module and $(\varphi,\theta)$ a pair a morphism of right Lie modules, $\varphi : \g \to {\rm Prim}(H^{\prime})$ and $\theta : \a \to {\rm Prim}(K^{\prime})$. Then there exists a morphism $(\Phi,\Theta)$ of right Hopf modules, $\Phi:H\to H^{\prime}$ and $\Theta:K\to K^{\prime}$, such that
$$
    \Phi \circ \iota_{H} 
    = \varphi,
    \qquad 
    \Theta \circ \iota_{K} 
    = \theta.
$$
See Fig.~\ref{fig:universalproblem} for a diagrammatic formulation.
\begin{figure}
	\[\begin{tikzcd}
		H && {H^{\prime}} & K && {K^{\prime}} \\
		{{\rm Prim}(H)} && {{\rm Prim}(H^{\prime})} & {{\rm Prim}(K)} && {{\rm Prim}(K^{\prime})} \\
		& {\mathfrak{g}} &&& {\mathfrak{a}}
		\arrow["\Phi", from=1-1, to=1-3]
		\arrow["{\iota_{\mathfrak{g}}}", from=3-2, to=2-1]
		\arrow[from=2-1, to=1-1]
		\arrow[from=2-3, to=1-3]
		\arrow["\varphi"', from=3-2, to=2-3]
		\arrow["\Theta", from=1-4, to=1-6]
		\arrow[from=2-4, to=1-4]
		\arrow[from=2-6, to=1-6]
		\arrow["{\iota_{a}}", from=3-5, to=2-4]
		\arrow["\theta"', from=3-5, to=2-6]
	\end{tikzcd}\]
\caption{\label{fig:universalproblem} Envelope of a right Lie module.}
\end{figure}
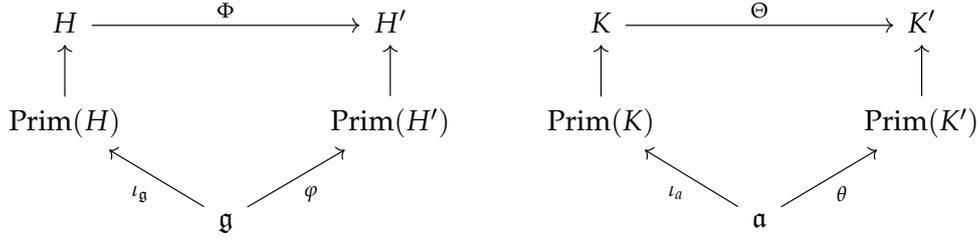
It is not difficult to build a universal envelope of a right Lie module $(\g,\vartriangleleft,\a)$. We call $\mathcal{U}(\g)$ and $\mathcal{U}(\a)$ the universal enveloping algebras of $\g$ and $\a$, respectively. We "extend/co-extend" the action of $\a$ on $\g$ to an action of $\mathcal{U}(\a)$ on $\mathcal{U}(\g)$. First, pick $a\in \a$ and set for any $x_1 \cdots x_n \in  \mathcal{U}(\g)$:
$$
    x_1\cdots x_n \preliediff a 
    := \sum_{i=1}^n x_1\cdots x_{i-1}(x_i \vartriangleleft a)x_{i+1}\cdots x_n.
$$
We need to check that this is well-defined, which amounts to verifying the equality
$$
    [x_1,x_2]_{\g} \preliediff a = x_1 x_2 \preliediff a - x_{2}x_1 \preliediff a.
$$
It is implied by the fact that $\a$ acts by derivations on $\g$. Notice that $\preliediff$ is a morphism from the Lie algebra $\a$ to the Lie algebra ${\rm End}_{\g}(\mathcal{U}(\g))$. Therefore, we extend $\preliediff$ to an algebra morphism from $\mathcal{U}(\a)$ to ${\rm End}(\mathcal{U}(\g))$. A simple induction shows that $X_1X_2 \preliediff A = (X_1 \preliediff A_{(1)})(X_2 \preliediff A_{(2)})$, for any $A \in \mathcal{U}(\a)$ and $X_1,X_2 \in \mathcal{U}(\g)$.

We let $(\varphi,\theta)$ be a morphism of right Lie modules, $\iota_{\mathcal{U}(\g)}$, $\iota_{H^{\prime}}$ and $\iota_{\mathcal{U}(\mathfrak{a})}$, $\iota_{K^{\prime}}$ are the canonical injections of the Lie algebras of primitive elements of $\mathcal{U}(\g)$, $H^{\prime}$, $\mathcal{U}(\a)$, and $K^{\prime}$, respectively. Since $\iota_{H^{\prime}} \circ \varphi$ and $\iota_{K^{\prime}} \circ \theta$  are Lie algebra morphisms, we extend them to algebra morphisms $\Phi: \mathcal{U}(\g)\to H^{\prime}$ and $\Theta: \mathcal{U}(\a)\to K'$. With these definitions, the diagrams in Fig.~\ref{fig:universalproblem} are commutative.

We denote by LieM, GroupM, and HopfM the categories of Lie modules, Group modules, and Hopf modules, respectively. The constructions exposed above produce a functor
$$
    \mathcal{U}: {\rm LieM} \to {\rm HopfM},
$$
which is the left adjoint of a functor ${\rm{HLM}}$ that associates with any Hopf module a Lie module via a restriction to primitive elements:
$$
    {\rm HLM} : {\rm HopfM} \to {\rm LieM}.
$$

\subsection{On the Semenov--Tian-Shanskii factorisation}
We refer the reader to \cite{semenov2003classical}.
We state in this section the Semenov--Tian--Shanskii factorisation for \emph{factorizable Lie bialgebras}.
We recall that a factorizable Lie bialgebra is the data of a Lie algebra $\g$ together with a tensor $\bm{r} \in g \otimes g$ such that: 
\begin{enumerate}
\item the symmetric tensor $I := \bm{r}_{12} + \bm{r}_{21}$ is \emph{invertible} and \emph{ad-invariant},
\item $\bm{r}$ satisfies the classical Yang--Baxter equation:
\begin{equation}
\label{eqn:cyb}
[r_{12},r_{13}] + [r_{12},r_{23}]+[r_{13},r_{23}]=0.
\end{equation}
\end{enumerate}
Recall that \eqref{eqn:cyb} is equivalent to:
\begin{equation*}
[\bm{r}(x^{\star}),\bm{r}(y^{\star})]_{\g}=\bm{r}(ad^{\star}(\bm{r}(x^{\star}))(y^{\star})-ad^{\star}(\bm{r}^{\star}(y))(x^{\star})).
\end{equation*}
for any $x^{\star},y^{\star} \in \g^{\star}$. The above equation implies that $[-,-]_{\star}\colon \g^{\star}\otimes\g^{\star}\to\g^{\star}$ defined by 
$$
[-,-]_{\star}(x^{\star},y^{\star})= ad^{\star}(\bm{r}(x^{\star}))(y^{\star})-ad^{\star}(\bm{r}^{\star}(y))(x^{\star}),~x^{\star},y^{\star} \in \g^{\star}
$$
is a Lie bracket, and therefore that $\bm{r}$ is a morphism of Lie algebras.
In the following, $\bm{r}_+ = \bm{r}$ and $\bm{r}_-=-r_{21}$. Therefore, $\bm{r}_+ = I + \bm{r}_-$ and $\bm{r}_-$ satisfy the classical Yang--Baxter equation.
We use the same symbols $\bm{r}_+$ (resp. $\bm{r}_-$) for the extension and coextension of $\bm{r}_+$ (resp. $\bm{r}_-$) to the envelope $\mathcal{U}(\g_+, [-,-]_+)$ (resp. $\mathcal{U}(\g),[-,-]_-$) with values in $\mathcal{U}(\g)$.
We denote by $G$ the group-like elements of $\mathcal{U}(\g)$ and $G_+$ (resp. $G_-$) the group-like elements of $\mathcal{U}(\g_+, [-,-]_+)$ (resp. $\mathcal{U}(\g_-, [-,-]_-)$)).

\begin{theorem}[Semenov-Tian-Shanskii factorisations \cite{semenov2003classical}]
Let $(\mathfrak{g},\bm{r})$ be a factorizable Lie bialgebra. 
\begin{enumerate}
\item Then, for any $x \in \g$, there exists a unique couple $(x_-,x_+)$ with $x_- \in {\rm Im}(\bm{r}_-)$ and $x_+ \in {\rm Im}(\bm{r}_+)$ such that:
$$
x = x_+ - x_-.
$$
\item Then any $X \in \mathcal{U}(\g)$ admits an unique representation:
$$
X = \sum_{i} X^{(i)}_+ \otimes \mathcal{S}_{\mathcal{U}(\g)}(X^{(i)}_-),
$$
such that:
$$
\sum_i X^{(i)}_+ \otimes X^{(i)}_- =  (\bm{r}_+ \otimes \bm{r}_-) \circ \Delta \circ (I^{-1}(X)).
$$
\item Let $g \in G$ be a group-like element in $\mathcal{U}(\g)$. Then $g$ admits a unique representation:
$$
g = g_+g_-^{-1},
$$
where $g_+ \in \tilde{G}_+={\rm Im}(\bm{r}_+ : G_+\to G)$ and $g_- \in \tilde{G}_- = {\rm Im}(\bm{r}_-: G_- \to G)$.
\end{enumerate}
\end{theorem}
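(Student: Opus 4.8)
The plan is to prove the three assertions in sequence, lifting one linear-algebraic factorization from $\g$ up to $\mathcal{U}(\g)$ and then down to the group-like elements. For (1) I would stay at the level of $\g$ and $\g^{\star}$: since $I=\bm{r}_+-\bm{r}_-$ is invertible by hypothesis, put $\xi:=I^{-1}(x)\in\g^{\star}$ and set $x_+:=\bm{r}_+(\xi)\in{\rm Im}(\bm{r}_+)$, $x_-:=\bm{r}_-(\xi)\in{\rm Im}(\bm{r}_-)$; then $x_+-x_-=(\bm{r}_+-\bm{r}_-)(\xi)=I(\xi)=x$, which gives existence. This canonical pair is determined by $\xi$, and $\xi$ is determined by $x$ through the injectivity of $I$, so it is unique; it is exactly this canonical choice that (2) encodes at the level of the envelope.

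For (2) the idea is to transport the factorization along the functor $\mathcal{U}$ of Section~\ref{sec:setting}. Since, as recalled above, $\bm{r}$ (hence $\bm{r}_+$, and likewise $\bm{r}_-=-r_{21}$) is a morphism of Lie algebras $\g^{\star}\to\g$, I extend $\bm{r}_\pm$ to Hopf algebra morphisms $\mathcal{U}(\g^{\star})\to\mathcal{U}(\g)$, and I extend the linear isomorphism $I^{-1}\colon\g\to\g^{\star}$ to a coalgebra isomorphism $\mathcal{U}(\g)\to\mathcal{U}(\g^{\star})$ via PBW symmetrization. Then $\Psi:=(\bm{r}_+\otimes\bm{r}_-)\circ\Delta\circ I^{-1}$ is a coalgebra morphism, the tensor $\sum_i X_+^{(i)}\otimes X_-^{(i)}=\Psi(X)$ is well defined (which already forces the uniqueness clause), and the asserted factorization is exactly the identity $m\circ(\mathrm{id}\otimes\mathcal{S})\circ\Psi=\mathrm{id}_{\mathcal{U}(\g)}$, where $m$ is the product of $\mathcal{U}(\g)$. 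On a primitive $x$ this reads $\bm{r}_+(I^{-1}x)-\bm{r}_-(I^{-1}x)=x$, recovering (1).

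I expect the passage from primitives to all of $\mathcal{U}(\g)$ to be the one genuine obstacle: the reconstruction $m\circ(\mathrm{id}\otimes\mathcal{S})\circ\Psi$ is only a coalgebra morphism, so it is not determined by its restriction to ${\rm Prim}$ and cannot be checked on algebra generators directly. My plan is to pass to the associated graded of the PBW filtration, where $\mathcal{U}(\g)$ and $\mathcal{U}(\g^{\star})$ become the symmetric bialgebras $S(\g)$ and $S(\g^{\star})$ and all the maps in sight, including $\overline{I^{-1}}=S(I^{-1})$, become graded morphisms of symmetric bialgebras. The convolution of bialgebra morphisms into the commutative algebra $S(\g)$ is again a bialgebra morphism, hence determined by its restriction to $\g$, on which the reconstruction acts as $\bm{r}_+I^{-1}-\bm{r}_-I^{-1}=\mathrm{id}$; thus the graded map is $\mathrm{id}_{S(\g)}$, and a filtered endomorphism with identity associated graded is the identity, which proves (2).

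For (3) I specialize (2) to a group-like $g\in G$. A coalgebra isomorphism preserves group-likes, so $\gamma:=I^{-1}(g)$ is group-like in $\mathcal{U}(\g^{\star})$, whence $\Delta(\gamma)=\gamma\otimes\gamma$ and $\Psi(g)=\bm{r}_+(\gamma)\otimes\bm{r}_-(\gamma)$. Setting $g_\pm:=\bm{r}_\pm(\gamma)$, these are group-like because Hopf morphisms preserve group-likes, and they lie in $\tilde{G}_\pm={\rm Im}(\bm{r}_\pm\colon G_\pm\to G)$; statement (2) then gives $g=g_+\,\mathcal{S}(g_-)=g_+g_-^{-1}$, since the antipode inverts group-likes. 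Uniqueness descends from the uniqueness in (2) together with the injectivity of $I^{-1}$ on group-likes. Once the graded argument of (2) is secured, both (1) and (3) are short, so that is where I would concentrate the work.
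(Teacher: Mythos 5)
The paper states this theorem as recalled background, with a pointer to the literature and no proof of its own, so I can only assess your argument on its merits. Your part (1) and your associated-graded analysis are fine as far as they go, but the proof of (2) — which you correctly identify as the crux — breaks down at the last step, and in fact the identity you are trying to prove is false under your reading of $I^{-1}$. The step ``a filtered endomorphism with identity associated graded is the identity'' is simply not true: $\mathrm{gr}(\phi)=\mathrm{id}$ only says $\phi(X)-X$ drops filtration degree. And here the defect is genuinely nonzero. Write $J:=\bm{r}_+ * (\mathcal{S}\circ\bm{r}_-)$ for your reconstruction map $m\circ(\mathrm{id}\otimes\mathcal{S})\circ(\bm{r}_+\otimes\bm{r}_-)\circ\Delta$ and let $I_{\mathcal{U}}$ be the PBW-symmetrized extension of $I$. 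On a product of two primitives $\xi\eta$, setting $a=\bm{r}_+(\xi)$, $b=\bm{r}_+(\eta)$, $c=\bm{r}_-(\xi)$, $d=\bm{r}_-(\eta)$, one computes $J(\xi\eta)=ab-ad-bc+dc$ while $I_{\mathcal{U}}(\xi\eta)=\tfrac12(ab+ba)-\tfrac12(ad+da)-\tfrac12(bc+cb)+\tfrac12(cd+dc)+\tfrac12([a,b]-[c,d])$, so
$$
J(\xi\eta)-I_{\mathcal{U}}(\xi\eta)=-\tfrac12\bigl([a,d]+[b,c]\bigr),
$$
which is nonzero already for $\g=\mathfrak{sl}_2$ with the standard $r$-matrix ($\xi=f^{\star}$, $\eta=e^{\star}$ gives $\tfrac12 h$). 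Equivalently, on group-likes your recipe would yield $\exp(x)=\exp(\bm{r}_+I^{-1}x)\exp(-\bm{r}_-I^{-1}x)$, which fails because $\bm{r}_+I^{-1}x$ and $\bm{r}_-I^{-1}x$ do not commute. So $J\circ I_{\mathcal{U}}^{-1}\neq\mathrm{id}$, and no repair of the last step can save this route.

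The missing idea is that $I^{-1}$ in statements (2) and (3) must be read not as the PBW extension of the linear map $I^{-1}\colon\g\to\g^{\star}$, but as the inverse of the \emph{global} coalgebra morphism $J=\bm{r}_+ * (\mathcal{S}\circ\bm{r}_-)\colon\mathcal{U}(\g^{\star},[-,-]_{\star})\to\mathcal{U}(\g)$ itself; with that reading the displayed formula in (2) is the tautology $J\circ J^{-1}=\mathrm{id}$, and the entire content of the theorem is the \emph{bijectivity} of $J$. Ironically, your associated-graded argument is exactly the right tool for that: $\mathrm{gr}(J)=S(\bm{r}_+)*S(-\bm{r}_-)$ is a bialgebra endomorphism of $S(\g)$ restricting to $I$ on $\g$, hence equals $S(I)$, which is invertible, so $J$ is a filtered coalgebra isomorphism. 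Part (3) then follows because a coalgebra isomorphism preserves group-likes, so $J^{-1}(g)$ is group-like and $g_{\pm}:=\bm{r}_{\pm}(J^{-1}(g))$ do the job — not your $\bm{r}_{\pm}(\exp(I^{-1}x))$. Finally, in (1) your uniqueness argument only rules out other pairs of the form $(\bm{r}_-(\eta),\bm{r}_+(\eta))$ for a common $\eta$; since ${\rm Im}(\bm{r}_+)\cap{\rm Im}(\bm{r}_-)$ is typically nonzero (the Cartan subalgebra in $\mathfrak{sl}_2$), uniqueness must be understood in that constrained sense, and you should say so explicitly rather than let it pass as unrestricted uniqueness.
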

\section{Kupershmidt's $\mathcal{O}$-operators and beyond}
\label{sec:Kupershmidt}
We recall the definitions of an Lie $\mathcal{O}$-operator on a Lie module $(\g, \vartriangleleft, \a)$ \cite{Kupershmidt1999} and that of a group $\mathcal{O}$-operator \cite{guo2021integration} on a right group module $(G,\cdot,\Gamma)$ (see the previous section). We also \emph{formalize} the notion of Hopf $\mathcal{O}$-operators  in the context of a right Hopf module $(H,\preliediff, K)$ already suggested in \cite{goncharov2021rota}. 
\subsection{Types of  $\mathcal{O}$-operators}
\label{sec:typesooperators}
\begin{definition}[Lie $\mathcal{O}$-operator \cite{Kupershmidt1999}]
\label{def:LieOop}
Let $(\mathfrak{g}, \vartriangleleft, \mathfrak{a})$ be a right Lie module. We call a Lie $\mathcal{O}$-operator a map ${\bf t} : \mathfrak{g} \to \mathfrak{a}$ satisfying for any $x,y \in \g$:
\begin{equation}
\label{eqn:olie}
    [{\bf t}(x),{\bf t}(y)]_{\mathfrak{a}} = {\bf t}(x \vartriangleleft {\bf t}(y)) - {\bf t}(y \vartriangleleft {\bf t}(x)) + {\bf t}([x,y]_\mathfrak{g}).
\end{equation}
We call \emph{$\mathcal{O}$ Lie algebra} the tuple $(\mathfrak{g}, \vartriangleleft, \mathfrak{a}, {\bf t})$. A morphism between two Lie $\mathcal{O}$-operator algebras, $(\mathfrak{g}, \vartriangleleft, \mathfrak{a}, {\bf t})$ and $(\mathfrak{g}^{\prime},\vartriangleleft^{\prime}, \mathfrak{g}^{\prime}, {\bf t}^{\prime})$, is a pair $(\varphi,\theta)$ of Lie algebra morphisms, $\varphi: \g \to \g^{\prime}$ and $\theta : \a \to a^{\prime}$, such that:
$$
    \theta \circ {\bf t} 
    = {\bf t^{\prime}} \circ \varphi.
$$
\end{definition}

\begin{remark}
Note that equation \eqref{eqn:olie} is not linear; the sum of two Lie $\mathcal{O}$-operators is, in general, not a Lie $\mathcal{O}$-operator.
\end{remark}

\begin{example}
If $\mathfrak{a} = \mathfrak{g}$ and $\vartriangleleft$ is the adjoint action of the Lie algebra $\g$ over itself:
\begin{equation*}
    x \vartriangleleft y =[x,y],~x,y \in \g,
\end{equation*}
then a Lie $\mathcal{O}$-operator is a Rota--Baxter operator (of weight one) on the Lie algebra $\g$.
\end{example}

\begin{definition}[group $\mathcal{O}$-operator \cite{guo2021integration}] Let $(G,\cdot, \Gamma)$ be a right group module. We call \emph{an group $\mathcal{O}$-operator } a map ${\bf T}:G \to \Gamma$ satisfying for any $h,g \in G$,
\begin{equation}
    {\bf T}(g){\bf T}(h) 
    = {\bf T}((g \cdot {\bf T}(h))h).
\end{equation}
We call $\mathcal{O}$ group the tuple $(G,\cdot,\Gamma, {\bf T})$. A morphism between two $\mathcal{O}$-groups, $(G,\cdot,\Gamma, {\bf T})$ and $(G^{\prime},\cdot^{\prime},\Gamma^{\prime}, {\bf T}^{\prime})$, is a pair $(\Phi,\Theta)$ of group morphisms $\Phi : G \to G^{\prime}$ and $\Theta : \Gamma \to \Gamma^{\prime}$ such that
$$
    \Theta \circ {\bf T}^{\prime} 
    = {\bf T}\circ \Phi.
$$
\end{definition}

\begin{example}
If $\Gamma = G$ and $\cdot$ is the right adjoint action $Ad$ of $G$ on itself, then an group $\mathcal{O}$-operator  is a Rota--Baxter group operator and $(G,Ad,G,{\bf T})$ is a Rota--Baxter group (of weight one) \cite{guo2021integration}.
\end{example}

\begin{definition}[Hopf $\mathcal{O}$-operator \cite{li2022post}]
Let $(H, \preliediff, K)$ be a right Hopf module. We call an Hopf $\mathcal{O}$-operator a \emph{coalgebra map} ${\bf T}: H \to K$ satisfying for any $e,f \in H$,
\begin{equation}
\label{def:Ohopfoperator}
    {\bf T}(e){\bf T}(f)
    ={\bf T}((e \preliediff {\bf T}(f_{(1)}))f_{(2)}).
\end{equation}
and 
\begin{equation}
\varepsilon_{H} = \varepsilon_{K} \circ \bT 
\end{equation}
We call  $\mathcal{O}$ Hopf algebra the tuple $(H,\preliediff, K, {\bf T})$. A morphism between two Hopf $\mathcal{O}$ algebras $(H,\preliediff,K, {\bf T})$ and $(H^{\prime},\preliediff^{\prime},K^{\prime}, {\bf T}^{\prime})$, is a pair $(\Phi,\Theta)$ of Hopf algebra morphisms, $\Phi : H \to H^{\prime}$ and $\Theta : K \to K^{\prime}$, such that
$$
    \Theta \circ {\bf T}^{\prime} 
    = {\bf T}\circ \Phi.
$$
\end{definition}
\subsection{Adjacent structures}
\label{ssec:adjstructure}
We propose here to review algebraic structures adjacent to an $\mathcal{O}$-operator of any type, that is, Lie, Hopf, group. Recall first the definition of a post-Lie algebra; see, for example, \cite{curry2019post}.
\begin{definition} 
\label{def:post-Lie}
A post-Lie algebra $(\g,\vartriangleleft)$ consists of a Lie algebra $(\g,[-,-])$ and a binary operation $\vartriangleleft: \g \otimes \g \to \g$ satisfying for any $x,y,z \in \g$:
\begin{align}
    [x,y]\vartriangleleft z 
    &= [x\vartriangleleft z,y]+[x,y\vartriangleleft z] \label{post-Lie1}\\ 
    x \vartriangleleft [y,z]
    &= (x \vartriangleleft y) \vartriangleleft z - x \vartriangleleft (y\vartriangleleft z) 
    - ((x \vartriangleleft z) \vartriangleleft y - x \vartriangleleft (z \vartriangleleft y)) \label{post-Lie2}
\end{align}
\end{definition}

The following proposition is well known.
\begin{proposition}
\label{prop:post-Lie}
	Let ${\bf t} : \g \to \a$ be an Lie $\mathcal{O}$-operator over $(\g,\vartriangleleft,\a)$. The binary operation $\vartriangleleft_{\bf t}\colon \g\otimes\g \to \g$ defined by  
	\begin{equation*}
	x \vartriangleleft_{\bf t} y = x \vartriangleleft {\bf t}(y), \quad x,y \in \g,
	\end{equation*}
  yields a post-Lie algebra $(\g,\vartriangleleft_{\bf t})$.
\end{proposition}
The post-Lie axioms imply that the bracket defined by
$$
    [x,y]_{\bf t} 
    := x \vartriangleleft_{\bf t} y - y \vartriangleleft_{\bf t} x  + [x,y]_{\g},\quad x,y \in \g,
$$
 is a Lie bracket. We denote by $\g_{\bf t}$ the Lie algebra with the same vector space as $\g$ but with bracket $[-,-]_{\bf t}$. Then, $(\g, \vartriangleleft_{\bf t})$ is post-Lie if and only if $(\g,\vartriangleleft_{\bf t}, \g_{\bf t})$ is a right Lie module. In that case,
$$
    ({\rm id}_{\g},{\bf t}) : (\g,\vartriangleleft_{\bf t},\g_{\bf t}) \to (\g,\vartriangleleft,\a)
$$
is a morphism of right Lie modules. 
The converse statement of Proposition \ref{prop:post-Lie} is also true; any post-Lie algebra is a right Lie module of the form $(\g, \vartriangleleft_{\bf t}, \g_{\mathfrak{t}})$ where ${\bf t} = \rm{id}_{\mathfrak{g}}$. It will be convenient to call such a triple $(\g, \vartriangleleft_{\bf t}, \g_{\bf t})$ \emph{ a right post-Lie module}. 
Now we describe structures adjacent to a Hopf $\mathcal{O}$-operator $\bT$ on a right Hopf module $(H,\preliediff,K)$. See \cite{li2022post} for the proofs of the following propositions.

\begin{proposition}
\label{prop:ohopfm}
Consider the right Hopf module $(H,\preliediff,K)$ with Hopf $\mathcal{O}$-operator $\bT$.
Define the bilinear operation $\star_{\bT}$ between elements of $H$ by, for any $e,f \in H$:
\begin{equation}
\label{eqn:productohopfoperator}
    e \star_{\bf T} f 
    := (e \preliediff {\bf T} (f_{(1)}))f_{(2)}.
\end{equation}
Then $\star_{\bf T}$ is associative and the coalgebra $H_{\bf T} := (H, \star_{\bT}, \eta_{H}, \Delta_{H}, \varepsilon_{H})$ is a Hopf algebra. We call $\mathcal{S}_{\bf T}$ its antipode. Additionally, the binary operation $\vartriangleleft_{\bT}\colon H \otimes H \to H$ defined by
\begin{equation*}
    e \vartriangleleft_{\bf T} f = e \preliediff {\bf T}(f),\quad e,f \in H
\end{equation*}
yields a right Hopf module $(H,\preliediff_{\bT},H_{\bT})$ and
$$
    ({\rm id_H,\bT}) : (H,\preliediff_{\bT},H_{\bT}) \to (H, \preliediff, K)
$$
is a morphism between right Hopf modules.
\end{proposition}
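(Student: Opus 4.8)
The plan is to organize everything around a single reinterpretation of the defining identity \eqref{def:Ohopfoperator}: since $e \star_{\bT} f = (e \preliediff \bT(f_{(1)}))f_{(2)}$, the $\mathcal{O}$-Hopf axiom says exactly that $\bT(e \star_{\bT} f) = \bT(e)\bT(f)$, i.e. $\bT$ is \emph{multiplicative} for the new product. I would first record the elementary facts that $\bT$, being a coalgebra map, satisfies $\varepsilon_K \circ \bT = \varepsilon_H$ and sends the group-like $\eta_H$ to a group-like element; specializing \eqref{def:Ohopfoperator} to $e=f=\eta_H$ and using the module-algebra unit relation $\eta_H \preliediff k = \varepsilon_K(k)\eta_H$ (part of the right $K$-module-algebra structure carried by $H$) shows $\bT(\eta_H)$ is idempotent, hence $\bT(\eta_H)=\eta_K$. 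Combined with the same unit relation this already gives that $\eta_H$ is a two-sided unit for $\star_{\bT}$ and that $\varepsilon_H$ is a $\star_{\bT}$-algebra map.

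Next I would show $(H,\star_{\bT},\Delta)$ is a bialgebra. The key step is that $\star_{\bT}$ is a coalgebra morphism: expanding $\Delta(e\star_{\bT} f)$ with the two compatibility axioms of $\preliediff$ (the coproduct axiom and $\Delta(f_{(2)})$) and the coalgebra property of $\bT$, one lands on $(e_{(1)}\preliediff \bT(f_{(1)}))f_{(3)}\otimes (e_{(2)}\preliediff \bT(f_{(2)}))f_{(4)}$, which equals $(e_{(1)}\star_{\bT} f_{(1)})\otimes(e_{(2)}\star_{\bT} f_{(2)})$ only after transposing $f_{(2)}$ and $f_{(3)}$ — this is the one genuine use of \emph{cocommutativity} of $H$, and I expect it is the step most easily overlooked. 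Associativity then becomes short: using that $\preliediff$ is an associative action of the algebra $K$, both $(e\star_{\bT} f)\star_{\bT} g$ and $e\star_{\bT}(f\star_{\bT} g)$ reduce to $\bigl(e\preliediff(\bT(f_{(1)})\bT(g_{(1)}))\bigr)(f_{(2)}\preliediff \bT(g_{(2)}))\,g_{(3)}$, where on the right I invoke the just-proved coalgebra-morphism property together with the multiplicativity $\bT(f_{(1)}\star_{\bT} g_{(1)})=\bT(f_{(1)})\bT(g_{(1)})$.

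The only non-formal point is the existence of the antipode $\mathcal{S}_{\bf T}$, and this is where I expect the main difficulty to lie. Two routes are available: either exhibit a closed formula — the natural guess, generalizing the group inverse $h^{-1}\!\cdot\bT(h)^{-1}$ of the $\mathcal{O}$-group case, is $\mathcal{S}_{\bf T}(h)=\mathcal{S}_H(h_{(1)})\preliediff \mathcal{S}_K(\bT(h_{(2)}))$, whose two-sided inverse property one then checks by a Sweedler computation using \eqref{def:Ohopfoperator} and axiom $3$ for $\preliediff$ — or, when $H$ is connected/graded (as for the envelopes of Lie modules that we actually use), observe that $\star_{\bT}$ preserves the filtration so that $H_{\bT}$ is a connected bialgebra and therefore automatically a Hopf algebra.

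Finally, the statements about $\vartriangleleft_{\bT}$ are immediate once the above is in place. The three right-Hopf-module axioms for $(H,\preliediff_{\bT},H_{\bT})$ follow by substituting $e\preliediff_{\bT} f = e\preliediff \bT(f)$ into the corresponding axioms for $(H,\preliediff,K)$ and using that $\bT$ is a coalgebra map; that $\preliediff_{\bT}$ is an \emph{associative} $H_{\bT}$-action is precisely the multiplicativity $\bT(f\star_{\bT} g)=\bT(f)\bT(g)$ together with the action-associativity of $\preliediff$, and unitality uses $\bT(\eta_H)=\eta_K$. For the morphism claim, $\bT\colon H_{\bT}\to K$ is a bialgebra (hence Hopf) map by the multiplicativity just used together with $\bT$ being a coalgebra map, while the defining compatibility $h\preliediff \bT(f)=h\preliediff_{\bT} f$ holds by definition; hence $(\mathrm{id}_H,\bT)$ is a morphism of right Hopf modules.
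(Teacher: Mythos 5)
Your argument is correct, but it cannot be compared line-by-line with ``the paper's proof'' because the paper does not prove Proposition \ref{prop:ohopfm} at all: it defers to \cite{li2022post}. So what you have produced is a self-contained substitute, and it hangs together. The skeleton is right: reading \eqref{def:Ohopfoperator} as multiplicativity $\bT(e\star_{\bT}f)=\bT(e)\bT(f)$, deducing $\bT(\eta_H)=\eta_K$ from idempotency of a group-like, proving the coalgebra-morphism property of $\star_{\bT}$ first (you correctly isolate the transposition of $f_{(2)}$ and $f_{(3)}$ as the one indispensable use of cocommutativity), and then getting associativity by feeding that property plus multiplicativity back into $e\star_{\bT}(f\star_{\bT}g)$. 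The module axioms for $\preliediff_{\bT}$ and the morphism claim are, as you say, immediate. Two points deserve flagging. First, your candidate antipode $\mathcal{S}_{\bT}(h)=\mathcal{S}_H(h_{(1)})\preliediff\mathcal{S}_K(\bT(h_{(2)}))$ is the correct closed form: it is exactly what the paper's later Theorems \ref{thm:inversion} and \ref{thm:tttransformtwoantipodes} give, since $\mathcal{S}_{H_{\bT}}=\mathcal{S}_H\circ e_{\bT}^{-1}=\mathcal{S}_H\circ(\mathrm{id}\preliediff\bT^{-1^{*}})$ and $\bT^{-1^{*}}=\mathcal{S}_K\circ\bT$ for a coalgebra map, combined with the axiom $\mathcal{S}_H(h\preliediff k)=\mathcal{S}_H(h)\preliediff k$; so your direct construction actually anticipates results the paper only establishes afterwards via the transform $e_{\bT}$. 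Second, the ``Sweedler computation'' is genuinely asymmetric: the identity $\mathcal{S}_{\bT}(h_{(1)})\star_{\bT}h_{(2)}=\varepsilon(h)\eta_H$ follows in one line from the action axiom and $\mathcal{S}_K(k_{(1)})k_{(2)}=\varepsilon_K(k)\eta_K$, but the other side $h_{(1)}\star_{\bT}\mathcal{S}_{\bT}(h_{(2)})=\varepsilon(h)\eta_H$ requires knowing $\bT\circ\mathcal{S}_{\bT}=\mathcal{S}_K\circ\bT$ first; this is not automatic but follows from a short interpolation: multiplicativity gives $(\bT\circ\mathcal{S}_{\bT})*\bT=\eta_K\circ\varepsilon_H$, and a left convolution inverse of $\bT$ must coincide with its two-sided convolution inverse $\mathcal{S}_K\circ\bT$. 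You should make that step explicit. Your fallback via connectedness of the filtered bialgebra $H_{\bT}$ is also valid for the enveloping-algebra situations the paper actually uses, but the closed formula is preferable since the general definition of a right Hopf module in Section \ref{sec:setting} does not assume connectedness.
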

\begin{proof}
The only difficult part is the existence of the antipode $\bS_{\bT}$. To that end, we will show that the right translation $R\colon H\to {\rm End}(H,H)$ is invertible as an elements of ${\rm End}(H, \End(H,H))$ endowed with the convolution product. In fact, set for any $E,F \in H$, $\beta_{\preliediff E}(F) =  F\preliediff \antipode_{K} \circ \bT(E)$. Then 
\begin{align*}
\beta_{\preliediff E_{(1)}} \circ \alpha_{\preliediff E_{(2)}} (F) &= F \preliediff \bT(E_{(1)}) \antipode_{K} \bT(E_{(2)}) \\
&= F\preliediff \bT(E)_{(1)}\antipode_{K}\bT(E)_{(2)} \\
&=F\preliediff \varepsilon_{K}\circ \bT(E) \\
&=F\preliediff \varepsilon_{H} \\
&=\varepsilon_{H}
\end{align*}
The relation $\alpha_{\preliediff E_{(1)}} \circ \beta_{\preliediff E_{(2)}}$ follows fron the same computations. We use the Theorem 2.5 in \cite{li2022post}.
\end{proof}
\begin{remark} 
A remark about the terminology: the Hopf algebra $H_{\bT}=(H, \star_{\bT},\eta_{H}, \Delta, \varepsilon, S_{\bT})$ is called the sub-adjacent Hopf algebra \cite{jiang2021lie, li2024sub} to the Post-Hopf algebra $(H, \cdot, \Delta, \varepsilon, \antipode_{H}, \preliediff_{\bT})$ as defined in \cite{li2024sub}.  
\end{remark}

It goes also by the name of $D$-algebra; see, for instance, \cite{al2022algebraic}. 

\begin{proposition}Let ${\bf T} : G \to \Gamma$ be an group $\mathcal{O}$-operator over $(G,\cdot,\Gamma)$. Define for any $f,e \in G$ the product
$$
    f \star_{\bT} e 
    := (f\cdot {\bf T}(e))e.
$$
Then $\star_{\bT}$ is associative and gives a group $(G,\cdot_{\bT})$.
Additionally, the operation $\cdot_{\bf T}\colon G \times G \to G$ defined for any $f,e \in G$ by:
\begin{equation*}
    f \cdot_{\bT} e 
    = f \cdot \bT(e)
\end{equation*}
yields a right group module $(G,\cdot_{\bT},G_{\bT})$ and
$$
    ({\rm id_G}, {\bf T}) : (G,\cdot_{\bT},G_{\bT}) \to (G, \cdot, \Gamma).
$$
is a morphism of right group-modules. We call the tuple $(G, \Gamma, \cdot_{\bT})$ a \emph{post-Group}.
\end{proposition}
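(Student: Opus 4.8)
The plan is to notice that the single governing identity is the defining relation of the $\mathcal{O}$-group operator, ${\bf T}(g){\bf T}(h)={\bf T}\big((g\cdot{\bf T}(h))h\big)$, which I will read as the statement that ${\bf T}$ is \emph{multiplicative} from $(G,\star_{\bf T})$ to $\Gamma$; every assertion in the proposition then follows by feeding this relation into the two right-group-module axioms $(xy)\cdot a=(x\cdot a)(y\cdot a)$ and $(x\cdot a)\cdot b=x\cdot(ab)$. Before anything else I would record the normalization ${\bf T}(1_G)=1_\Gamma$: setting $g=h=1_G$ in the defining identity and using that $\cdot$ acts by automorphisms (so $1_G\cdot a=1_G$) gives ${\bf T}(1_G){\bf T}(1_G)={\bf T}(1_G\cdot{\bf T}(1_G))={\bf T}(1_G)$, hence ${\bf T}(1_G)=1_\Gamma$.

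Next I would prove associativity of $\star_{\bf T}$ by expanding both bracketings. On one side, $(f\star_{\bf T}e)\star_{\bf T}g=\big(((f\cdot{\bf T}(e))e)\cdot{\bf T}(g)\big)g$, which after applying the automorphism axiom and then the action-compatibility axiom becomes $\big(f\cdot({\bf T}(e){\bf T}(g))\big)(e\cdot{\bf T}(g))\,g$. On the other side, $f\star_{\bf T}(e\star_{\bf T}g)=\big(f\cdot{\bf T}((e\cdot{\bf T}(g))g)\big)(e\cdot{\bf T}(g))\,g$, and the defining identity (with $g\mapsto e$, $h\mapsto g$) collapses ${\bf T}((e\cdot{\bf T}(g))g)$ to ${\bf T}(e){\bf T}(g)$, so the two expressions coincide. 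For the unit, $1_G$ works on both sides: $e\star_{\bf T}1_G=(e\cdot{\bf T}(1_G))1_G=e\cdot 1_\Gamma=e$ and $1_G\star_{\bf T}e=(1_G\cdot{\bf T}(e))e=1_G\cdot e=e$, again using ${\bf T}(1_G)=1_\Gamma$ and $1_G\cdot a=1_G$.

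For inverses I would exhibit the explicit candidate $e^{-1}\cdot{\bf T}(e)^{-1}$ and check it is a left inverse: $\big(e^{-1}\cdot{\bf T}(e)^{-1}\big)\star_{\bf T}e=\big((e^{-1}\cdot{\bf T}(e)^{-1})\cdot{\bf T}(e)\big)e=(e^{-1}\cdot 1_\Gamma)e=1_G$, where the penultimate step uses $(x\cdot a)\cdot b=x\cdot(ab)$. Since $\star_{\bf T}$ is now known to be an associative monoid in which every element has a left inverse, the standard monoid lemma (if $ba=1$ and $cb=1$ then $ab=(cb)(ab)=c(ba)b=cb=1$) upgrades left inverses to two-sided inverses, so $G_{\bf T}:=(G,\star_{\bf T})$ is a group. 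I expect this inverse/monoid step to be the only point that is not an immediate one-line consequence of the defining identity, and hence the main thing to get right.

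Finally I would dispatch the module and morphism claims. That $x\mapsto x\cdot_{\bf T}a=x\cdot{\bf T}(a)$ is an automorphism of $(G,\cdot)$ is inherited verbatim from $\cdot$ being an action by automorphisms, with inverse given by $\cdot\,{\bf T}(a)^{-1}$ in $\Gamma$; the action law $(x\cdot_{\bf T}a)\cdot_{\bf T}b=x\cdot({\bf T}(a){\bf T}(b))=x\cdot{\bf T}(a\star_{\bf T}b)=x\cdot_{\bf T}(a\star_{\bf T}b)$ is exactly the defining identity again, so $(G,\cdot_{\bf T},G_{\bf T})$ is a right group module. For the morphism $({\rm id}_G,{\bf T})$, the map ${\rm id}_G$ is trivially a group morphism of $(G,\cdot)$, while ${\bf T}(a\star_{\bf T}b)={\bf T}((a\cdot{\bf T}(b))b)={\bf T}(a){\bf T}(b)$ together with ${\bf T}(1_G)=1_\Gamma$ shows ${\bf T}:G_{\bf T}\to\Gamma$ is a group morphism; the compatibility square ${\rm id}_G(x)\cdot{\bf T}(a)=x\cdot{\bf T}(a)={\rm id}_G(x\cdot_{\bf T}a)$ holds by the very definition of $\cdot_{\bf T}$, completing the proof. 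The whole argument runs in close parallel to Proposition~\ref{prop:ohopfm}, but is purely group-theoretic and avoids Sweedler calculus.
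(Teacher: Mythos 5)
Your proof is correct and complete: the paper itself gives no argument for this proposition (it is stated as an adjacent-structure fact, with proofs of the analogous Hopf statement deferred to the literature), so there is nothing to compare against beyond noting that your direct verification is the standard one. All the key points are handled properly — the normalization ${\bf T}(1_G)=1_\Gamma$, associativity via the defining identity read as multiplicativity of ${\bf T}$ on $(G,\star_{\bf T})$, the left inverse $e^{-1}\cdot{\bf T}(e)^{-1}$ upgraded by the monoid lemma (and matching the explicit inverse recorded in the paper's remark), and the module and morphism axioms reduced to the two right-group-module identities.
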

\begin{remark}
    The inverse of an element $g\in G_{\bT}$ is $g^{-1}\cdot \bT(g)^{-1}$. In the remaining part of the paper, to lift ambiguities, we will refer to the product in the notations for inverses, for example
    $$
    g^{-1^{\star_{\bT}}} = g^{-1^{\centerdot}}\cdot \bT(g)^{-1^{\circ}}
    $$
    with $\centerdot$ the internal product on $G$ and $\circ$ the internal product on $\Gamma$. We will also allow ourselves to forget replace $\star_{\bT}$ by $\bT$ to keep the notations contained.
\end{remark}

We explain how a Lie $\mathcal{O}$-operator ${\bf t}$ over a right Lie module $(\g,\vartriangleleft,\mathfrak{a})$ yields an Hopf $\mathcal{O}$-operator ${\bf T}$ on the universal envelope $(\mathcal{U}(\g),\preliediff, \mathcal{U}(\mathfrak{a}))$ of the Lie module $(\g,\vartriangleleft,\a)$. Given $E \in \mathcal{U}(\g)$ and $h \in \g$,
we define the map $[{\bf t}] : \mathcal{U}(\g) \to \mathcal{U}(\a)$ recursively by the system:
\begin{align}
\tag{GO}
\begin{aligned}
\label{eqn:RBop}
    &[{\bf t}](Eh) := [{\bf t}](E) \mathbf{t}(h) - [{\bf t}](E\preliediff \mathbf{t}(h)), \\
    &[{\bf t}](h)  := \mathbf{t}(h) \\
    &\bf T(\mathbf{1})=\mathbf{1}.
\end{aligned}
\end{align}
Inspired  \cite{oudom2008lie}, we refer to \eqref{eqn:RBop} as the \emph{ Guin--Oudom recursion} with initial values ${\bf t}$.

\begin{remark} Note that the above recursion is well-defined only for Lie $\mathcal{O}$-operators of weight $1$. In particular, it is not possible to extend to $\mathcal{U}(\g)$ a Lie $\mathcal{O}$-operator of weight $\lambda \in \mathbb{R}$ by using this recursion.
\end{remark}

\begin{proposition}
\label{prop:coalgebrarelative}
For any Lie $\mathcal{O}$-operator $\mathbf{t}$ on the right Lie module $(\mathfrak{g},\vartriangleleft, \mathfrak{a})$ the operator $\bf T$ obtained by the Guin--Oudom recursion \eqref{eqn:RBop} with initial condition ${\bf t}$ is a coalgebra morphism.
\end{proposition}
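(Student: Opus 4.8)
The plan is to prove, by induction on the PBW (Poincar\'e--Birkhoff--Witt) degree of $\mathcal{U}(\g)$ --- equivalently, on the length of a word $x_1\cdots x_n$ with $x_i\in\g$ --- the two identities defining a coalgebra morphism,
\[
\Delta_{\mathcal{U}(\a)}\circ\mathbf{T}=(\mathbf{T}\otimes\mathbf{T})\circ\Delta_{\mathcal{U}(\g)},
\qquad
\varepsilon_{\mathcal{U}(\a)}\circ\mathbf{T}=\varepsilon_{\mathcal{U}(\g)}.
\]
The base cases are immediate from the initial values of \eqref{eqn:RBop}: the grouplike unit gives $(\mathbf{T}\otimes\mathbf{T})\Delta(\1)=\1\otimes\1=\Delta(\mathbf{T}(\1))$, while for $h\in\g$ the element $\mathbf{T}(h)=\mathbf{t}(h)$ lies in $\a\subset{\rm Prim}(\mathcal{U}(\a))$, so $\Delta\mathbf{t}(h)=\mathbf{t}(h)\otimes\1+\1\otimes\mathbf{t}(h)=(\mathbf{T}\otimes\mathbf{T})\Delta(h)$ and $\varepsilon(\mathbf{t}(h))=0=\varepsilon(h)$.

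For the inductive step I would write a degree-$(n+1)$ element as $Eh$ with $E$ of degree $n$ and $h\in\g$, and expand $\Delta_{\mathcal{U}(\a)}\mathbf{T}(Eh)$ using \eqref{eqn:RBop}. Three structural inputs feed the computation: (i) $\Delta_{\mathcal{U}(\a)}$ is an algebra morphism and $\mathbf{t}(h)$ is primitive; (ii) the induction hypothesis applies to both $\mathbf{T}(E)$ and $\mathbf{T}(E\preliediff\mathbf{t}(h))$, which is legitimate because $\preliediff\mathbf{t}(h)$ acts on $\mathcal{U}(\g)$ as a derivation and hence preserves the PBW degree, keeping $E\preliediff\mathbf{t}(h)$ at degree $n$; and (iii) the Hopf-module coalgebra compatibility $\Delta(E\preliediff k)=(E_{(1)}\preliediff k_{(1)})\otimes(E_{(2)}\preliediff k_{(2)})$, specialised to the primitive $k=\mathbf{t}(h)$ and combined with $E\preliediff\1=E$, which gives
\[
\Delta(E\preliediff\mathbf{t}(h))=(E_{(1)}\preliediff\mathbf{t}(h))\otimes E_{(2)}+E_{(1)}\otimes(E_{(2)}\preliediff\mathbf{t}(h)).
\]
Assembling these and regrouping the four resulting terms yields
\[
\Delta_{\mathcal{U}(\a)}\mathbf{T}(Eh)
=\bigl(\mathbf{T}(E_{(1)})\mathbf{t}(h)-\mathbf{T}(E_{(1)}\preliediff\mathbf{t}(h))\bigr)\otimes\mathbf{T}(E_{(2)})
+\mathbf{T}(E_{(1)})\otimes\bigl(\mathbf{T}(E_{(2)})\mathbf{t}(h)-\mathbf{T}(E_{(2)}\preliediff\mathbf{t}(h))\bigr).
\]

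The decisive step is then to recognise each parenthesis, via \eqref{eqn:RBop} read with $E_{(1)}$ (resp. $E_{(2)}$) in the role of $E$, as $\mathbf{T}(E_{(1)}h)$ (resp. $\mathbf{T}(E_{(2)}h)$). This collapses the right-hand side to $\mathbf{T}(E_{(1)}h)\otimes\mathbf{T}(E_{(2)})+\mathbf{T}(E_{(1)})\otimes\mathbf{T}(E_{(2)}h)$, which is exactly $(\mathbf{T}\otimes\mathbf{T})\Delta(Eh)$ because $\Delta(Eh)=E_{(1)}h\otimes E_{(2)}+E_{(1)}\otimes E_{(2)}h$ ($\Delta$ an algebra morphism, $h$ primitive). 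The counit identity follows from a shorter parallel induction using that $\varepsilon$ is an algebra morphism and $\varepsilon(\mathbf{t}(h))=0$.

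I expect the main obstacle to be the legitimacy of that recognition step: invoking \eqref{eqn:RBop} with the arbitrary Sweedler components $E_{(1)},E_{(2)}$ --- rather than honest words --- as the factor from which a primitive tail is peeled off presupposes that the Guin--Oudom recursion is well defined and behaves multiplicatively in its last generator independently of how one splits off that generator. This well-definedness, which underlies the very definition of $\mathbf{T}$, is the only non-formal ingredient; granting it (together with the degree-preservation of $\preliediff\mathbf{t}(h)$ that licenses the induction hypothesis on $E\preliediff\mathbf{t}(h)$), the remainder is routine Sweedler-notation bookkeeping.
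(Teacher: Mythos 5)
Your proposal is correct and follows essentially the same route as the paper's proof: induction on the PBW degree, expanding $\Delta\mathbf{T}(Eh)$ via the recursion, using primitivity of $\mathbf{t}(h)$, the Hopf-module compatibility of $\preliediff$ with the coproducts, and the induction hypothesis on both $E$ and $E\preliediff\mathbf{t}(h)$ before regrouping the four terms and recognising the recursion applied to the Sweedler components. Your added remarks on the counit identity and on the well-definedness underlying the recognition step are sound but do not change the argument, which the paper carries out identically.
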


\begin{proof}
We proceed by induction. For an integer $N \geq 1$, we denote by $\mathcal{U}(\g)^{(\leq N)}$ the image of $\bigoplus_{k \leq N} V^{\otimes k} \subset T(\g)$ by the natural projection. Since $\bf T$ equals $\mathbf{t}$ on primitive elements, we see that for any $h \in \g$, $[{\bf t}] \otimes [{\bf t}] \circ \Delta(h) = \Delta [{\bf t}](h)$. Let $n \geq 1$. We assume that $\bf T$ satisfies for any $E \in \mathcal{U}(\g)^{(\leq N)}:$
\begin{equation*}
    [{\bf t}] \otimes [{\bf t}] \circ \Delta (E) 
    = {\Delta} [{\bf t}](E).
\end{equation*}
Now pick a -- primitive -- element $h \in \g$, then
\begin{align*}
    \Delta [{\bf t}](Eh)
    &=\Delta\big([{\bf t}](E) [{\bf t}](h)-[{\bf t}](E\preliediff [{\bf t}](h)) \big) \\
    &=[{\bf t}](E_{(1)}) [{\bf t}](h)_{(1)} \otimes ([{\bf t}](E_{(2)}) [{\bf t}](h)_{(2)}) - [{\bf t}](E_{(1)}\preliediff [{\bf t}](h)_{(1)}) \otimes [{\bf t}](E_{(2)}\preliediff [{\bf t}](h)_{(2)}) \\
    &=[{\bf t}](E_{(1)}) [{\bf t}](h)\otimes [{\bf t}](E_{(2)}) + [{\bf t}](E_{(1)})\otimes [{\bf t}](E_{(2)}) [{\bf t}](h) \\
    &\hspace{2cm}-\big([{\bf t}](E_{(1)} \preliediff [{\bf t}](h)) \otimes [{\bf t}](E_{(2)}) + [{\bf t}](E_{(1)})\otimes [{\bf t}](E_{(2)}\preliediff [{\bf t}](h))\big)\\
    &=\big([{\bf t}](E_{(1)}) {\bf t}(h)-[{\bf t}](E_{(1)} \preliediff {\bf t}(h))\big)\otimes [{\bf t}](E_{(2)})\\
    &\hspace{2cm} + [{\bf t}](E_{(1)})\otimes \big([{\bf t}](E_{(2)}) {\bf t}(h)-[{\bf t}](E_{(2)}\preliediff {\bf t}(h))\big) \\
    &= {[{\bf t}]} \otimes {[{\bf t}]} \circ \Delta (Eh).
    \end{align*}
\end{proof}

\begin{proposition}
\label{prop:hopffromgo}
Let $\mathbf{t}$ be an Lie $\mathcal{O}$-operator on the Lie module $(\g,\vartriangleleft,\mathfrak{a})$. Then the operator $[{\bf t}]$ following from the Guin--Oudom recursion \eqref{eqn:RBop} is a Hopf $\mathcal{O}$-operator on $(\mathcal{U}(\g), \preliediff, \mathcal{U}(\mathfrak{a}))$:
\begin{equation}
\label{eqn:secondcompat}
    [{\bf t}](E)[{\bf t}](F) = [{\bf t}](E \star_{[{\bf t}]} F),
    \quad E,F \in \univalgebra.
    \end{equation}
\end{proposition}

\begin{proof}
We set for $E\otimes F \in \mathcal{U}(\g) \otimes \mathcal{U}(\g)$
\begin{align*}
    \alpha(E\otimes F) := [{\bf t}](E) [{\bf t}](F),
    \quad
    \beta(E\otimes F) := [{\bf t}](E\star_{[{\bf t}]}F)
\end{align*}
We proceed by induction. The formula holds whenever $F$ is primitive. We assume $\alpha(E\otimes F) = \beta(E\otimes F)$ whenever $F \in \mathcal{U}(\g)^{\leq (N)}$ for a certain $N\geq 1$. Pick $h \in \g$ and $E,F \in Ker(\varepsilon)$. Then
\begin{align*}
    [{\bf t}](E\star_{[{\bf t}]}(Fh))
    & = [{\bf t}](E\star_{[{\bf t}]}(F\star_{[{\bf t}]}h)) - [{\bf t}](E\star_{[{\bf t}]}(F\preliediff \mathbf{t}(h))) \\
    &=[{\bf t}](E\star_[{\bf t}] F) {\bf t}(h) - [{\bf t}](E)[{\bf t}](F\preliediff \mathbf{t}(h))\\
    &=[{\bf t}](E) \big([{\bf t}](F)[{\bf t}](h) - [{\bf t}](F\preliediff {\bf t}(h)) \big)\\
    &=[{\bf t}](E)\preliediff [{\bf t}](F\star_{[{\bf t}]}h) - [{\bf t}](F\preliediff \mathbf{t}(h))\\
    &=[{\bf t}](E)[{\bf t}](Fh).
\end{align*}
\end{proof}

\begin{corollaire} 
Any Hopf $\mathcal{O}$-operator on $(\mathcal{U}(\g), \preliediff, \mathcal{U}(\mathfrak{a}))$ satisfies the recursion \eqref{eqn:RBop} with $\bf t$ the restriction of ${\bf T}$ to $\g$; $[\bf t] = \bT$. Therefore, two Hopf $\mathcal{O}$-operators coincide if their restrictions to $\g$  coincide.
\end{corollaire}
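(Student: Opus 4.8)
The plan is to show that an arbitrary $\mathcal{O}$-Hopf operator $\bT$ on $(\univalgebra, \preliediff, \mathcal{U}(\a))$ is \emph{forced} to obey the Guin--Oudom recursion \eqref{eqn:RBop} with initial datum $\mathbf{t} := \bT|_{\g}$, and then to invoke that \eqref{eqn:RBop} has at most one solution once the initial data are fixed. First I would record that $\mathbf{t}$ does map $\g$ into $\a$: since $\bT$ is a coalgebra map and every $h \in \g$ is primitive, $\bT(h)$ is primitive in $\mathcal{U}(\a)$, hence lies in $\a = \mathrm{Prim}(\mathcal{U}(\a))$. (As a side remark, restricting the defining identity \eqref{def:Ohopfoperator} to $e,f \in \g$ and antisymmetrizing recovers \eqref{eqn:olie}, so $\mathbf{t}$ is genuinely an $\mathcal{O}$-Lie operator and the earlier propositions apply to $[\mathbf{t}]$.)

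The base case $\bT(\mathbf{1}) = \mathbf{1}$ is the first point requiring care. Evaluating \eqref{def:Ohopfoperator} at $e = f = \mathbf{1}$ and using $\Delta(\mathbf{1}) = \mathbf{1} \otimes \mathbf{1}$ gives $\bT(\mathbf{1})\,\bT(\mathbf{1}) = \bT(\mathbf{1} \preliediff \bT(\mathbf{1}))$. Because $\bT$ is a coalgebra map, $\bT(\mathbf{1})$ is group-like, so $\varepsilon(\bT(\mathbf{1})) = 1$; combined with the identity $\mathbf{1} \preliediff A = \varepsilon(A)\mathbf{1}$ — which follows from extending $\preliediff$ to an algebra morphism $\mathcal{U}(\a) \to \End(\univalgebra)$, each $\preliediff a$ acting as a derivation annihilating $\mathbf{1}$ — the right-hand side collapses to $\bT(\mathbf{1})$. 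Thus $\bT(\mathbf{1})$ is a group-like idempotent, hence equal to $\mathbf{1}$ since group-likes are invertible.

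The heart of the argument is the recursion itself. Taking $e = E \in \univalgebra$ and $f = h \in \g$ primitive in \eqref{def:Ohopfoperator}, the coproduct $\Delta(h) = h \otimes \mathbf{1} + \mathbf{1} \otimes h$ together with $\bT(\mathbf{1}) = \mathbf{1}$ and $E \preliediff \mathbf{1} = E$ yields $E \star_{\bT} h = Eh + (E \preliediff \mathbf{t}(h))$. Applying $\bT$ and using its linearity gives $\bT(E)\,\mathbf{t}(h) = \bT(Eh) + \bT(E \preliediff \mathbf{t}(h))$, that is, the recursion $\bT(Eh) = \bT(E)\,\mathbf{t}(h) - \bT(E \preliediff \mathbf{t}(h))$, which is precisely \eqref{eqn:RBop}.

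Finally I would close by a filtration induction establishing uniqueness. On the PBW filtration degree, degrees $0$ and $1$ are handled by the previous two steps. For the inductive step, writing a degree-$n$ element as a combination of products $Eh$ with $E$ of degree $n-1$ and $h \in \g$, the element $E \preliediff \mathbf{t}(h) = \sum_i x_1 \cdots (x_i \vartriangleleft \mathbf{t}(h)) \cdots x_{n-1}$ is again of degree $\le n-1$; hence the recursion expresses $\bT(Eh)$ purely in terms of values of $\bT$ in degree $\le n-1$. Since $[\mathbf{t}]$ satisfies the same recursion with the same initial data, the induction forces $\bT = [\mathbf{t}]$. The concluding assertion is then immediate: if $\bT, \bT'$ are $\mathcal{O}$-Hopf operators with $\bT|_{\g} = \bT'|_{\g} =: \mathbf{t}$, then $\bT = [\mathbf{t}] = \bT'$. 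I expect the only genuine subtleties to be the base case $\bT(\mathbf{1}) = \mathbf{1}$ (the group-like idempotent argument, resting on $\mathbf{1} \preliediff A = \varepsilon(A)\mathbf{1}$) and the observation that the derivation-type action $\preliediff \mathbf{t}(h)$ does not raise the filtration degree, which is exactly what makes the recursion well-founded.
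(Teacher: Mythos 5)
Your proposal is correct and is exactly the argument the paper intends: the corollary is stated without proof as an immediate consequence of the preceding propositions, and the expected verification is precisely yours — specialize the defining identity \eqref{def:Ohopfoperator} to a primitive second argument to recover the recursion \eqref{eqn:RBop}, check the unit via $\mathbf{1}\preliediff A=\varepsilon(A)\mathbf{1}$, and conclude uniqueness by induction on the PBW filtration since $E\preliediff \mathbf{t}(h)$ does not raise the filtration degree. Nothing further is needed.
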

Following the above corollary, we can forget about the brackets $[~]$ for the extension of a Lie $\mathcal{O}$-operator ${\bf t}$.
\begin{corollaire}
\label{cor:envelope}
    The envelope of the Lie algebra $\mathfrak{g}_{\bf t}$ is isomorphic to $H_{\bT}$
\end{corollaire}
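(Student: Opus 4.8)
The plan is to exhibit the isomorphism via the universal property of $\mathcal{U}(\g_{\bf t})$, and then promote the resulting morphism to an isomorphism by a Poincar\'e--Birkhoff--Witt comparison of associated graded algebras. The starting observation is that $H_{\bT} = (\univalgebra, \star_{\bT}, \Delta)$ carries the \emph{same} coproduct $\Delta$ as $\univalgebra$, so that ${\rm Prim}(H_{\bT}) = {\rm Prim}(\univalgebra) = \g$. The first task is therefore to determine the Lie bracket that the commutator of $\star_{\bT}$ induces on these primitives.

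First I would carry out the key computation. For $x,y \in \g$, expand
\[
    x \star_{\bT} y = (x \preliediff \bT(y_{(1)}))\,y_{(2)}.
\]
Since $y$ is primitive, $\Delta(y) = y \otimes \mathbf{1} + \mathbf{1} \otimes y$; using $\bT(\mathbf{1}) = \mathbf{1}$, the recursion value $\bT(y) = {\bf t}(y)$, and $x \preliediff \mathbf{1} = x$, this collapses to
\[
    x \star_{\bT} y = x\,y + x \preliediff {\bf t}(y) = x\,y + x \vartriangleleft_{\bf t} y,
\]
where $x\,y$ denotes the original product of $\univalgebra$. Antisymmetrizing and using $x\,y - y\,x = [x,y]_{\g}$ in $\univalgebra$ gives
\[
    x \star_{\bT} y - y \star_{\bT} x = [x,y]_{\g} + x \vartriangleleft_{\bf t} y - y \vartriangleleft_{\bf t} x = [x,y]_{\bf t}.
\]
Thus the inclusion $\g \hookrightarrow H_{\bT}$ is a Lie algebra morphism from $\g_{\bf t}$ into the commutator Lie algebra of $(H_{\bT}, \star_{\bT})$, with image in the primitives. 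The universal property of $\mathcal{U}(\g_{\bf t})$ then yields a unique morphism of cocommutative Hopf algebras $\Phi \colon \mathcal{U}(\g_{\bf t}) \to H_{\bT}$ extending this inclusion.

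It remains to prove $\Phi$ is bijective, and here I would use the increasing filtration $\univalgebra^{(\leq N)}$ from the proof of Proposition~\ref{prop:coalgebrarelative}. The point is that $\star_{\bT}$ is a filtered product whose top-degree part recovers the symmetric multiplication: in $e \star_{\bT} f = (e \preliediff \bT(f_{(1)}))\,f_{(2)}$ the summand with $f_{(1)} = \mathbf{1}$ equals $e\,f$ and is of maximal degree, while every other summand has $f_{(2)}$ of strictly smaller degree, since $\preliediff$ preserves the word-length (degree) in the $e$-variable. Consequently $\mathrm{gr}(H_{\bT}) \cong S(\g)$, exactly as $\mathrm{gr}(\univalgebra) \cong S(\g)$; by PBW one also has $\mathrm{gr}(\mathcal{U}(\g_{\bf t})) \cong S(\g_{\bf t}) = S(\g)$. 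As $\Phi$ is filtered and is the identity on generators, $\mathrm{gr}(\Phi)$ is the identity of $S(\g)$, hence an isomorphism, and therefore $\Phi$ is an isomorphism of Hopf algebras.

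The hard part will be the filtration step: one must verify carefully that $\star_{\bT}$ genuinely respects $\univalgebra^{(\leq N)}$ and that its associated graded is the commutative algebra $S(\g)$ rather than a twisted version. This reduces to two facts already at hand, namely that $\bT = [{\bf t}]$ is a filtered coalgebra map (Proposition~\ref{prop:coalgebrarelative} together with the recursion~\eqref{eqn:RBop}) and that $\preliediff$, being built from a derivation action, preserves the $e$-degree; granting these, the identification on associated graded is automatic. Alternatively, one could avoid the explicit degree estimate by recognizing the postHopf module $(H, \preliediff_{\bT}, H_{\bT})$ of Proposition~\ref{prop:ohopfm} as the universal envelope of the postLie module $(\g, \vartriangleleft_{\bf t}, \g_{\bf t})$ and invoking uniqueness of universal envelopes, which identifies its second component $H_{\bT}$ with $\mathcal{U}(\g_{\bf t})$.
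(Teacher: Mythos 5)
Your argument is correct, but it is a genuinely more self-contained route than the one the paper takes. The paper's own proof is a two-line verification that the pair $(\star_{\bT},\vartriangleleft_{\bT})$ satisfies the defining identities of the Guin--Oudom construction applied to the postLie algebra $(\g,\vartriangleleft_{\bf t})$, namely $E\star_{\bT}F=(E\vartriangleleft_{\bT}F_{(1)})F_{(2)}$ together with the recursion $E\vartriangleleft_{\bT}(Ff)=(E\vartriangleleft_{\bT}F)\vartriangleleft_{\bT}f-E\vartriangleleft_{\bT}(F\vartriangleleft_{\bT}f)$, and then cites \cite{EBRAHIMIFARD201719} for the fact that this construction realizes the envelope of the adjacent Lie algebra on the underlying coalgebra of $\univalgebra$; this is essentially the ``alternative'' you sketch in your closing sentence. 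What you do instead is reprove the outsourced result from scratch: the commutator computation $x\star_{\bT}y-y\star_{\bT}x=[x,y]_{\bf t}$ on primitives (which is correct, using $\bT(\mathbf{1})=\mathbf{1}$ and $\bT|_{\g}={\bf t}$) identifies the Lie morphism, the universal property of $\mathcal{U}(\g_{\bf t})$ produces $\Phi$, and the associated-graded comparison forces bijectivity. Your degree estimate is sound: $\preliediff$ preserves length in the left variable because it is built from a derivation action, and the only length-preserving summand of $e\star_{\bT}f$ is the one with $f_{(1)}=\mathbf{1}$, so $\mathrm{gr}(H_{\bT})\cong S(\g)$ and $\mathrm{gr}(\Phi)$ is the identity on $S(\g)$. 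Your version buys self-containedness and makes explicit exactly where cocommutativity, the recursion \eqref{eqn:RBop}, and PBW enter; the paper's version buys brevity by reducing the statement to the known characterization of the Guin--Oudom envelope in the cited reference.
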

\begin{proof} The proof follows from \cite{EBRAHIMIFARD201719} and the observation that, with $\bT$ the Hopf $\mathcal{O}$-operator obtained by unfolding \ref{eqn:RBop} and $\star_{\bT}$ and $\vartriangleleft_{\bT}$ defined as above, for any $E,F \in \mathcal{U}(\g)$ and $f\in \g$
\begin{align*}
    &E \star_{\bT}(F) = E \vartriangleleft_{\bT}(F_{(1)}) F_{(2)} \\ 
    & E \vartriangleleft_{\bT} (Ff) = (E \vartriangleleft_{\bT} F) \vartriangleleft_{\bT}f)-E \vartriangleleft_{\bT} (F \vartriangleleft_{\bT}f)
\end{align*}
\end{proof}
\begin{remark}
Note that the recursion \eqref{eqn:RBop} is equivalent to the closed formula proved in Theorem 5.2 of \cite{li2022post} (the operator $\bar{T}$ in the notations of \cite{li2022post} is equal to $[\bf t]$) (Propositions \ref{cor:envelope},\ref{prop:hopffromgo} follows from this Theorem also).
\end{remark}
\subsection{On the antipode of a Post-Hopf algebra}
\label{sec:antipodePost-Hopf}
We fix a Lie algebra module $(\g,\vartriangleleft,\a)$ for the entire section. Uppercase letters will be used for co-algebra morphisms, e.g $T\colon\mathcal{U}(\g)\to\mathcal{U}(\a)$. For the remaining part of the paper, bold letters ($\bm{t}, \bm{T}$ \ldots) will be used to denote $\mathcal{O}$-operators. In this Section, we define a linear mapping between coalgebra morphisms from $\mathcal{U}(\g)$ to $\mathcal{U}(\a)$ and coalgebra diffeomorphisms of $\mathcal{U}(\g)$. We pinpoint a fixed-point equation satisfied by the antipode of ${\mathcal{U}(\mathfrak{g})}_{\bf T}$ when $\bm{T}$ is a Hopf $\mathcal{O}$-operator.

 \begin{definition}[Transform $e$]
Let $T\coloni \mathcal{U}(\g)\to \mathcal{U}(\a)$ be a coalgebra morphism. We define $e_{T} \colon \mathcal{U}(\g) \to \mathcal{U}(\g)$ as the unique linear map satisfying:
\begin{equation*}
    \varepsilon_{\mathcal{U}(\g)} \circ e_{T} 
    = \varepsilon_{\mathcal{U}(\g)},
\end{equation*}
and for any element $f \in \mathcal{U}(\g)$:
	\begin{align*}
	&e_{T}(f) = f_{(1)} \preliediff T \circ e_{T}(f_{(2)}),
	\end{align*}
using Sweedler's notation $\Delta_{ \mathcal{U}(\g)}(f)=f_{(1)} \otimes f_{(2)}$. 
\end{definition}
\begin{example}
Let $x,y,z \in\g$ primitive elements of $\univalgebra$ then
\begin{align*}
    e_{T}(x) 
    &= x\\
    e_{T}(xy)
    &=xy + x \preliediff T e_T (y) + y \preliediff T\circ e_T(x) \\ 
    &= xy  
        + x \preliediff {T}(y) 
        + y \preliediff { T}(x).\\
	e_{T}(xyz) 
    &= xyz 
        + xy \preliediff {T}(z)    
        + xz \preliediff {T}(y) 
        + yz \preliediff {T}(x)\\ 
    & \quad
        + x \preliediff  Te_T(yz) 
        + y \preliediff {T}e_T(xz) 
        + z \preliediff {T}e_T(xy).
\end{align*}
When $T$(=$\bT$) is a Hopf $\mathcal{O}$-operator, implying that 
$$
    \bT(xy)=\bT(x)\bT(y)-\bT(x\preliediff \bT(y)),
$$
one gets:
\begin{align*}
    e_\bT(xyz)
    &= xyz
        +(x \preliediff {\bf T}(z))y 
        +x(y \preliediff {\bf T}(z)) \\
    &\quad + (x \preliediff {\bf T}(y))z 
        + x (z \preliediff {\bf T}(y)) \\
    &\quad + (y \preliediff {\bf T}(x))z 
        + y(z \preliediff {\bf T}(x)) \\
    &\quad + (x \preliediff {\bf T}(y)) \preliediff {\bf T}(z) 
        + x \preliediff {\bf T}(z \preliediff {\bf T}(y)) \\
    &\quad + (y \preliediff {\bf T}(x)) \preliediff {\bf T}(z) 
    + y \preliediff {\bf T}(z \preliediff {\bf T}(x)) \\
    &\quad + (z \preliediff {\bf T}(x)) \preliediff {\bf T}(y) 
     + z \preliediff {\bf T}(y \preliediff {\bf T}(x)).
\end{align*}
If $T$ is only assumed to be a coalgebra morphism, one gets instead:
\begin{align*}
    e_T(xyz) 
    &= xyz 
        +(x \preliediff { T}(z))y 
        +x(Y \preliediff {T}(z)) \\
	&\quad + (x \preliediff {T}(y))z 
        + x (z \preliediff {T}(y)) \\
	&\quad + (y \preliediff {T}(x))z 
        + y(z \preliediff {T}(x)) \\
        &\quad +(x \preliediff {T}(yz)+x\preliediff T(y \preliediff T(z)) 
        + x \preliediff T(z \preliediff T(y))  \\
        &\quad + (y \preliediff {T}(xz)+ y \preliediff T(x\preliediff T(z)) 
        + x \preliediff  T(z \preliediff  T(y)) \\
        &\quad + (z \preliediff {T}(xy)+z\preliediff  T(x\preliediff T(y)) 
        + x \preliediff  T(z \preliediff T(y)) .
\end{align*}
\end{example}
In the following propositions, we collect the basic properties of the transform $e_T$ of a coalgebra morphism.

\begin{proposition} The map $e_{T}$ is an isomorphism of coalgebras.
\end{proposition}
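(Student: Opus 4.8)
The plan is to establish two facts separately: that $e_T$ is a morphism of coalgebras, and that it is bijective; the conclusion then follows from the elementary fact that the set-theoretic inverse of a bijective coalgebra morphism is automatically a coalgebra morphism. Both facts I would prove by induction along the length filtration $\mathcal{U}(\g)^{(\leq N)}$ already used above, exploiting that the defining recursion expresses $e_T(f)$ in terms of $e_T$ evaluated on the second leg $f_{(2)}$ of the coproduct.

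For \emph{bijectivity}, I would first record the boundary values: $e_T(1)=1$ (since $1\preliediff A=\varepsilon(A)1$ forces $e_T(1)=\varepsilon(T(1))\,1$, consistent with the counit normalization) and $T(1)=1$. The key structural observation is that the action $\preliediff$ preserves the length filtration: the extended action of $\mathcal{U}(\a)$ is built from derivations distributed over the tensor factors, so $f_{(1)}\preliediff B$ has the same length as $f_{(1)}$. Consequently, in the expansion $e_T(f)=\sum_{(f)} f_{(1)}\preliediff (T e_T)(f_{(2)})$ the only top-length contribution is the diagonal term $f_{(2)}=1$, which equals $f\preliediff T(1)=f$, while every other summand has $\mathrm{length}(f_{(1)})<\mathrm{length}(f)$. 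Thus $e_T=\mathrm{id}+N$ with $N$ strictly lowering the filtration degree, hence locally nilpotent; $e_T$ is therefore a filtered linear isomorphism, with inverse $\sum_{k\geq 0}(-N)^k$ terminating on each $\mathcal{U}(\g)^{(\leq N)}$.

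For the \emph{coalgebra morphism} property, the counit identity $\varepsilon\circ e_T=\varepsilon$ holds by definition. For the coproduct I would induct on length, the base case ($f$ primitive, $e_T(f)=f$) being immediate. Applying $\Delta$ to $e_T(f)=f_{(1)}\preliediff (Te_T)(f_{(2)})$ and using the Hopf-module compatibility $\Delta(h\preliediff k)=(h_{(1)}\preliediff k_{(1)})\otimes(h_{(2)}\preliediff k_{(2)})$ together with $T$ and, by induction, $e_T$ being coalgebra morphisms on lower degrees, I would obtain, after reindexing by coassociativity, $\Delta e_T(f)=(f_{(1)}\preliediff (Te_T)(f_{(3)}))\otimes(f_{(2)}\preliediff (Te_T)(f_{(4)}))$. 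I would then compute $(e_T\otimes e_T)\Delta f=(f_{(1)}\preliediff (Te_T)(f_{(2)}))\otimes(f_{(3)}\preliediff (Te_T)(f_{(4)}))$ and match the two expressions via the swap $f_{(2)}\leftrightarrow f_{(3)}$, which is legitimate precisely because $\mathcal{U}(\g)$ is cocommutative.

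Combining bijectivity with the coalgebra morphism property, and invoking that the inverse of a bijective coalgebra map is again a coalgebra map, yields that $e_T$ is an isomorphism of coalgebras. I expect the main obstacle to be the bookkeeping in the coproduct computation: keeping Sweedler's indices straight through coassociativity and pinpointing exactly where cocommutativity is needed to reconcile $\Delta\circ e_T$ with $(e_T\otimes e_T)\circ\Delta$. By comparison, the filtration/unitriangularity argument for invertibility is routine.
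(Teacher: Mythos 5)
Your proposal is correct, and the half of it that carries the real content --- the verification that $\Delta\circ e_{T}=(e_{T}\otimes e_{T})\circ\Delta$ --- is essentially the paper's argument: the paper also shows that both sides satisfy the same recursive (fixed-point) equation obtained by pushing $\Delta$ through the defining relation $e_{T}(f)=f_{(1)}\preliediff T\circ e_{T}(f_{(2)})$ via the Hopf-module compatibility of $\preliediff$ with the coproduct, and then reconciles the Sweedler legs using cocommutativity, exactly as you do. Where you genuinely diverge is on invertibility: the paper's proof of this proposition never addresses it (the word ``isomorphism'' is only justified by the \emph{next} proposition, which exhibits the explicit compositional inverse $e_{T}^{-1}=\mathrm{id}\preliediff T^{-1^{*}}$), whereas you give a self-contained filtration argument, writing $e_{T}=\mathrm{id}+N$ with $N$ strictly lowering the length filtration because $\preliediff$ preserves it and the only top-length term in the expansion is $f\preliediff T(1)=f$. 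Your route buys a proof that does not presuppose the convolution-invertibility of $T$ and, as a bonus, justifies the existence and uniqueness of $e_{T}$ itself; the paper's route buys a closed formula for the inverse that is then reused later (e.g.\ in Theorem \ref{thm:inversion}). One small point to keep in mind when you write out the induction: in the recursion the leg $f_{(2)}$ ranges over \emph{all} of $\Delta(f)$, including the term $1\otimes f$, so the inductive hypothesis does not literally apply there; that term is harmless because $1\preliediff k=\varepsilon(k)1$ collapses it, but it deserves a sentence.
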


\begin{proof}
For any $f \in \mathcal{U}(\g)$, one has: 
$$ 
    e_{T}(f) = f_{(1)} \preliediff T\circ e_{T}(f_{(2)})
    =(\text{id} \preliediff T\circ e_{T}) \Delta(f).
$$
The strategy of the proof consists in showing that
$e_{T}(f)_{(1)} \otimes e_{T}(f)_{(2)}$ and $e_{T}(f_{(1)})\otimes e_{T}(f_{(2)})$ satisfy the same fixed point equation (this will be sufficient since both equal $1\otimes 1$ if $f=1$).
To this end, we should extend $\preliediff $ to $\mathcal{U}(\g)\otimes \mathcal{U}(\g)$ by setting
$$
    f \otimes g \preliediff  c\otimes d = f \preliediff  c \otimes g \preliediff d, \quad  f,g,c,d \in \univalgebra.
$$
From $[f \preliediff T(g)]_{(1)} \otimes [f \preliediff T(g)]_{(2)}$ = $(f_{(1)} \preliediff T\circ g_{(1)}) \otimes (f_{(2)} \preliediff T(g_{(2)}))$,  for $f,g \in \univalgebra$, it is not difficult to see that
$$
    \alpha \coloni f \mapsto e_{T}(f)_{(1)} \otimes e_{T}(f)_{(2)}
$$
solves the fixed point equation:
$$
    \alpha (f) 
    = (f_{(1)} \otimes f_{(2)}) \preliediff T\circ 	\alpha (f_{(3)}).
$$
In fact, for any $f \in \univalgebra$:
\begin{align*}
    \alpha (f)
    =e_{T}(f)_{(1)} \otimes e_{ T}(f)_{(2)} 
    &= [ f_{(1)} \preliediff T\circ e_{ T}(f_{(2)})]_{(1)} \otimes  [f_{(1)} \preliediff T\circ e_{T}(f_{(2)})]_{(2)}\\
    &=f_{(1)} \otimes f_{(2)} \preliediff T\circ e_{T}(f_{(3)})_{(1)} \otimes e_{T}(f_{(3)})_{(2)} \\
    &= (\text{id} \preliediff T \circ \alpha)(\Delta \otimes \text{id})\Delta(f).
\end{align*}
On the other hand, from the cocommutativity of $\univalgebra$, we get:
\begin{align*}
    e_{T}(f_{(1)}) \otimes e_{T}(f_{(2)}) 
   &= [ f_{(1)} \preliediff T\circ e_{T}(f_{(2)})] \otimes  [f_{(3)} \preliediff T\circ e_{T}(f_{(4)})]\\
    &=f_{(1)} \otimes f_{(3)} \preliediff T\circ e_{T}(f_{(2)}) \otimes e_{T}(f_{(4)}) \\
    &=f_{(1)} \otimes f_{(2)} \preliediff T\circ e_{T}(f_{(3)}) \otimes e_{T}(f_{(4)}).
\end{align*}
This ends the proof.
\end{proof}
In the following proposition, we denote by $T^{-1^*}$ the inverse the inverse of the coalgebra morphism $T$ with respect to the convolution product $*$ .
\begin{proposition}
\label{prop:inversett}
 Let ${T} : \mathcal{U}(\g) \to \mathcal{U}(\a)$ be a coalgebra morphism. The inverse of $e_T$ for the compositional product $\circ$ on ${\rm Hom}_{{\rm Vect}}(\mathcal{U}(\g),\mathcal{U}(\g))$ admits the following closed formula:
\begin{align*}
    e_{{T}}^{-1} 
    = \mathrm{id} \preliediff {T}^{-1^{*}}.
\end{align*}
\end{proposition}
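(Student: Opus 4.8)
The plan is to recognise $e_T$ as one member of a whole family of operators and to establish a composition law inside that family. For a linear map $S\colon\mathcal{U}(\g)\to\mathcal{U}(\a)$, write $R_S:=\mathrm{id}\preliediff S$ for the endomorphism of $\mathcal{U}(\g)$ given by $R_S(f)=f_{(1)}\preliediff S(f_{(2)})$. With this notation the defining recursion reads exactly $e_T=R_{T\circ e_T}$, and the identity to be proved is $e_T^{-1}=R_{T^{-1^{*}}}$.

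First I would prove the composition law
$$
R_{S_1}\circ R_{S_2}=R_{(S_1\circ R_{S_2})*S_2},
$$
valid whenever $S_2$ is a coalgebra morphism (the map $S_1$ may be an arbitrary linear map, which is important since $T^{-1^{*}}$ need not be one). The computation runs as follows: setting $g=R_{S_2}(f)=f_{(1)}\preliediff S_2(f_{(2)})$, the coalgebra compatibility of $\preliediff$ together with $S_2$ being a coalgebra morphism gives $\Delta g=(f_{(1)}\preliediff S_2(f_{(3)}))\otimes(f_{(2)}\preliediff S_2(f_{(4)}))$; applying $R_{S_1}$ and using the associativity of the $\mathcal{U}(\a)$-action (namely $(h\preliediff k)\preliediff k'=h\preliediff(k'k)$, which holds because $\preliediff$ extends to an algebra morphism $\mathcal{U}(\a)\to\mathrm{End}(\mathcal{U}(\g))$) collapses the two actions into one; finally cocommutativity of $\mathcal{U}(\g)$ lets me permute the Sweedler legs so that the inner $\mathcal{U}(\a)$-factor reassembles into the convolution $(S_1\circ R_{S_2})*S_2$ evaluated on a single leg.

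Then I would apply this law with $S_2=T\circ e_T$ — a coalgebra morphism, being the composite of $T$ and of $e_T$, the latter by the preceding proposition — and $S_1=T^{-1^{*}}$, so that $R_{S_2}=e_T$ and
$$
R_{T^{-1^{*}}}\circ e_T=R_{(T^{-1^{*}}\circ e_T)*(T\circ e_T)}.
$$
The inner convolution simplifies because $e_T$ is a coalgebra morphism: one computes $(T^{-1^{*}}\circ e_T)*(T\circ e_T)=(T^{-1^{*}}*T)\circ e_T=(\eta\varepsilon)\circ e_T=\eta\varepsilon$, using that $T^{-1^{*}}$ is the two-sided convolution inverse of $T$ and that $e_T$ preserves the counit. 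Since $R_{\eta\varepsilon}=\mathrm{id}$ (because $f_{(1)}\varepsilon(f_{(2)})=f$ and $h\preliediff 1_{\mathcal{U}(\a)}=h$), this yields $R_{T^{-1^{*}}}\circ e_T=\mathrm{id}$. As $e_T$ is a coalgebra \emph{isomorphism}, a left $\circ$-inverse is automatically the two-sided inverse, whence $e_T^{-1}=\mathrm{id}\preliediff T^{-1^{*}}$.

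The main obstacle is the bookkeeping in the composition law: one must carry a fourfold coproduct, track the order of multiplication in the \emph{noncommutative} algebra $\mathcal{U}(\a)$ (where the action-associativity $(h\preliediff k)\preliediff k'=h\preliediff(k'k)$ reverses the order), and use cocommutativity of $\mathcal{U}(\g)$ to repair the Sweedler indices so that the expression reassembles as a convolution. Everything else is formal; in particular $S_1=T^{-1^{*}}$ enters only as a linear map, so no coalgebra property of the convolution inverse is required, and the noncommutativity of $\mathcal{U}(\a)$ is harmless precisely because the convolution inverse is two-sided.
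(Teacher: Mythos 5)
Your proof is correct and follows essentially the same route as the paper: both verify that $\mathrm{id}\preliediff T^{-1^{*}}$ is a left $\circ$-inverse of $e_T$ by substituting the fixed-point equation $e_T=\mathrm{id}\preliediff(T\circ e_T)$, collapsing the two actions via the algebra-morphism property of $\preliediff$, and using that $e_T$ is a coalgebra morphism to rewrite the inner factor as $(T*T^{-1^{*}})\circ e_T=\eta\circ\varepsilon$. Your packaging of the key step as a general composition law $R_{S_1}\circ R_{S_2}=R_{(S_1\circ R_{S_2})*S_2}$ is only a mild generalization of the paper's one-line computation, not a different argument.
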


\begin{proof}
One has
\begin{align*}
    (\mathrm{id}\preliediff T^{-1^{*}})\circ e_{{T}} 
    &= e_{{T}} \preliediff {T}^{-1^*} e_{{ T}} \\
    &=\mathrm{id} \preliediff {T} e_{{T}} * {T}^{-1^{*}}e_{{T}} \\
    &= \mathrm{id}\preliediff ({T} * {T}^{-1^*} )\circ e_{T} \\
    &=\mathrm{id} \preliediff \eta_{\mathcal{U}(\a)}\circ\varepsilon_{\mathcal{U}(\g)} \\
    &=\mathrm{id}.
    \end{align*}
\end{proof}

\begin{theorem}
\label{thm:inversion} Let ${\bf T}: \mathcal{U}(\g) \to \mathcal{U}(\a)$ be a Hopf $\mathcal{O}$-operator, then $$ \mathrm{id}\preliediff {\bf T}^{-1^{*}} 
    = e_{{\bf T}}^{-1} = \antipode_{\mathcal{U}(\a)} \circ e_{\bf T} \circ \antipode_{\mathcal{U}(\g)} = e_{{\bf T} \circ \mathcal{S}_{\mathcal{U}(\g)}}.
$$
\end{theorem}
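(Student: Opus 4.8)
The plan is to read the statement as a chain of three equalities and to prove them in the order \emph{first}, \emph{last}, \emph{middle}, isolating the single place where the $\mathcal{O}$-Hopf hypothesis is genuinely used. (Since $e_{\bf T}$ maps $\mathcal{U}(\g)$ to $\mathcal{U}(\g)$, I read the outer antipode in $\mathcal{S}_{\mathcal{U}(\a)}\circ e_{\bf T}\circ \mathcal{S}_{\mathcal{U}(\g)}$ as $\mathcal{S}_{\mathcal{U}(\g)}$, so that the composition lands in $\mathrm{End}(\mathcal{U}(\g))$; write $S:=\mathcal{S}_{\mathcal{U}(\g)}$.) The \textbf{first equality} $\mathrm{id}\preliediff {\bf T}^{-1^{*}} = e_{\bf T}^{-1}$ is immediate: an $\mathcal{O}$-Hopf operator is in particular a coalgebra morphism $\mathcal{U}(\g)\to\mathcal{U}(\a)$, so the closed formula of Proposition \ref{prop:inversett} applies verbatim and I would simply invoke it.

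For the \textbf{last equality} $S\circ e_{\bf T}\circ S = e_{{\bf T}\circ S}$, the plan is to use that $e_{(\cdot)}$ is characterised by its defining recursion together with the counit normalisation, so any linear map satisfying the same recursion must coincide with it. Setting $g := S\circ e_{\bf T}\circ S$ and $h=S(f)$, I would expand $e_{\bf T}(h)=h_{(1)}\preliediff {\bf T}(e_{\bf T}(h_{(2)}))$ and push $S$ inside via the Hopf-module antipode axiom $\mathcal{S}_H(h\preliediff k)=\mathcal{S}_H(h)\preliediff k$. Cocommutativity of $\mathcal{U}(\g)$ gives $\Delta\circ S=(S\otimes S)\circ\Delta$, hence $h_{(1)}=S(f_{(1)})$ and $h_{(2)}=S(f_{(2)})$, and involutivity $S^{2}=\mathrm{id}$ turns the expression into $g(f)=f_{(1)}\preliediff ({\bf T}\circ S)(g(f_{(2)}))$, which is exactly the recursion defining $e_{{\bf T}\circ S}$. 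With $\varepsilon\circ g=\varepsilon$ this yields $g=e_{{\bf T}\circ S}$. Note that this step uses only the coalgebra-morphism property of ${\bf T}$, not the $\mathcal{O}$-Hopf identity.

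The \textbf{middle equality} $e_{\bf T}^{-1}=S\circ e_{\bf T}\circ S$ is the crux. By the first and last equalities it amounts to $\psi:=\mathrm{id}\preliediff {\bf T}^{-1^{*}}=e_{{\bf T}\circ S}$, which I would once more establish by the uniqueness of the recursion, verifying that $\psi$ satisfies $\psi(f)=f_{(1)}\preliediff ({\bf T}\circ S)(\psi(f_{(2)}))$. Unfolding $\psi(f_{(2)})=f_{(2)}\preliediff {\bf T}^{-1^{*}}(f_{(3)})$ and pushing $S$ through $\preliediff$ as before, the right-hand side becomes $f_{(1)}\preliediff {\bf T}\bigl(S(f_{(2)})\preliediff {\bf T}^{-1^{*}}(f_{(3)})\bigr)$. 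At this point I would invoke the $\mathcal{O}$-Hopf identity \eqref{def:Ohopfoperator}, which says precisely that ${\bf T}$ is multiplicative for $\star_{\bf T}$, i.e. ${\bf T}\colon H_{\bf T}\to\mathcal{U}(\a)$ is a morphism of Hopf algebras; hence it intertwines antipodes, giving ${\bf T}^{-1^{*}}=\mathcal{S}_{\mathcal{U}(\a)}\circ{\bf T}={\bf T}\circ \mathcal{S}_{\bf T}$ with $\mathcal{S}_{\bf T}$ the antipode of $H_{\bf T}$. Rewriting ${\bf T}^{-1^{*}}(f_{(3)})={\bf T}(\mathcal{S}_{\bf T}(f_{(3)}))$, splitting $\mathcal{S}_{\bf T}(f_{(3)})$ by its coproduct, and using \eqref{def:Ohopfoperator} to pull ${\bf T}$ out of $\preliediff$, the inner expression collapses; cocommutativity and the antipode relations in $H_{\bf T}$ then reduce the whole right-hand side to $f_{(1)}\preliediff {\bf T}^{-1^{*}}(f_{(2)})=\psi(f)$, and uniqueness closes the chain.

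I expect the \textbf{main obstacle} to lie entirely in this middle step, specifically in organising the Sweedler bookkeeping so that \eqref{def:Ohopfoperator} becomes applicable to $S(f_{(2)})\preliediff {\bf T}^{-1^{*}}(f_{(3)})$: the identity produces a trailing factor $g_{(2)}$ that is not visibly present, so one must reintroduce it through coassociativity and the antipode relations of $H_{\bf T}$ (equivalently, exploit that ${\bf T}$ intertwines $\mathcal{S}_{\bf T}$ and $\mathcal{S}_{\mathcal{U}(\a)}$) before the collapse goes through. Everything else is forced, either by a direct citation of Proposition \ref{prop:inversett} or by uniqueness of the $e$-recursion.
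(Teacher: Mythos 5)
Your proposal is correct, but it takes a genuinely different route from the paper. The paper reduces everything to the single identity $e_{\bT}\circ e_{\bT\circ\antipode_{\mathcal{U}(\g)}}=\mathrm{id}$ and proves it by one convolutional computation: it threads the fixed-point equation for $e_{\bT}$ through the $\mathcal{O}$-Hopf identity (to turn the convolution $(\bT\circ\antipode\circ e_{\bT\circ\antipode})*(\bT\circ e_{\bT}\circ e_{\bT\circ\antipode})$ into $\bT$ of a $\star_{\bT}$-convolution) and then collapses $(\antipode_{\mathcal{U}(\g)}\circ e_{\bT})\cdot e_{\bT}=\eta\circ\varepsilon$ by the antipode axiom; the first equality is quoted from Proposition \ref{prop:inversett} and the remaining identifications are left implicit. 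You instead prove the three equalities separately, with the outer two obtained from the uniqueness of the defining recursion of $e_{(\cdot)}$; this correctly isolates the fact that $\antipode\circ e_{\bT}\circ\antipode=e_{\bT\circ\antipode}$ needs only that $\bT$ is a coalgebra morphism, it handles the third expression in the chain explicitly (which the paper's proof, modulo the typo $\antipode_{\mathcal{U}(\a)}$ for $\antipode_{\mathcal{U}(\g)}$ that you rightly correct, does not), and it pinpoints the middle equality as the unique place where \eqref{def:Ohopfoperator} is used. Your middle step does still need its ``collapse'' carried out: the cleanest closure is to prove the lemma $\bT\bigl(\antipode(g_{(1)})\preliediff\bT^{-1^{*}}(g_{(2)})\bigr)=\bT^{-1^{*}}(g)$ not by reintroducing the trailing factor of $\star_{\bT}$, but by convolving the left-hand side with $\bT$ on the right: one application of \eqref{def:Ohopfoperator} plus the module axiom $(h\preliediff k)\preliediff k'=h\preliediff(kk')$ gives
\begin{equation*}
\bT\bigl(\antipode(g_{(1)})\preliediff\bT^{-1^{*}}(g_{(2)})\bigr)\,\bT(g_{(3)})
=\bT\bigl((\antipode(g_{(1)})\preliediff(\bT^{-1^{*}}(g_{(2)})\bT(g_{(3)})))\,g_{(4)}\bigr)
=\bT(\antipode(g_{(1)})g_{(2)})=\eta_{\mathcal{U}(\a)}\circ\varepsilon_{\mathcal{U}(\g)}(g),
\end{equation*}
so the map is the convolution inverse of $\bT$; substituting this lemma into your recursion for $\psi=\mathrm{id}\preliediff\bT^{-1^{*}}$ finishes the argument. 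With that refinement your plan is complete, and arguably more transparent than the paper's single computation.
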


\begin{proof}
It is enough to prove $e_{\bT \circ \antipode_{\mathcal{U}(\g)}} = e_{\bT}^{-1}$. We compute $e_{\bT} \circ e_{\bT \circ \antipode_{\mathcal{U}(\g)}}$.
\begin{align*}
    e_{\bT} \circ e_{\bT \circ \antipode_{\mathcal{U}(\g)}} 
    &= e_{\bT \circ \antipode_{\mathcal{U}(\g)}} \preliediff (\bT \circ e_{\bT}) * e_{\bT \circ \antipode_{\mathcal{U}(\g)}} \\
    &= \mathrm{id} \preliediff(\bT \circ \antipode_{\mathcal{U}(\g)} \circ e_{\bT\circ \antipode_{\mathcal{U}(\g)}}) * (\bT \circ e_{\bT} \circ e_{\bT \circ \antipode_{\mathcal{U}(\g)}}) \\
    &= \mathrm{id} \preliediff \bT \circ (\antipode_{\mathcal{U}(\g)} *_{\bT} e_{\bT}) \circ e_{\bT \circ \antipode_{\mathcal{U}(\g)}} \\
    &= \mathrm{id} \preliediff \bT \circ ((\antipode_{\mathcal{U}(\g)} \preliediff e_{\bT})\cdot e_{\bT}) \circ e_{\bT \circ \antipode_{\mathcal{U}(\g)}} \\
    &= \mathrm{id} \preliediff \bT \circ ((\antipode_{\mathcal{U}(\g)} (\mathrm{id} \preliediff e_{\bT}))\cdot e_{\bT}) \circ e_{\bT \circ \antipode_{\mathcal{U}(\g)}}, \\
    &= \mathrm{id} \preliediff \bT \circ ((\antipode_{\mathcal{U}(\g)}\circ e_{\bT}) \cdot e_{\bT}) \circ e_{\bT \circ \antipode_{\mathcal{U}(\g)}  } \\
    &=\mathrm{id} \preliediff \bT \circ \eta_{\mathcal{U}(\a))}\circ\varepsilon_{\mathcal{U}(\g)} = \mathrm{id} \preliediff \eta_{\mathcal{U}(\a)}\circ\varepsilon_{\mathcal{U}(\g)} = \mathrm{id}.
\end{align*}
\end{proof}

\begin{theorem}
\label{thm:tttransformtwoantipodes}
Let ${\bf T}: \mathcal{U}(\g) \to \mathcal{U}(\a)$ be an Hopf $\mathcal{O}$-operator. The antipode $\mathcal{S}_{H_{{\bT}}}$ of the Hopf algebra $H_{{\bT}}$ satisfies
\begin{equation}
\label{eqn:antipodebt}
    \mathcal{S}_{H_{{\bf T}}} = \mathcal{S}_{\mathcal{U}(\g)}\circ e_{{\bf T} \circ \antipode_{\mathcal{U}(\g)}} = e_{\bf T}\circ \antipode_{\mathcal{U}(\g)}   
\end{equation}
\end{theorem}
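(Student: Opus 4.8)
The plan is to identify $\mathcal{S}_{H_{\bT}}$ (the antipode provided by Proposition \ref{prop:ohopfm}) through its defining property as the $\star_{\bT}$-convolution inverse of the identity, to translate that property into an equation in the convolution algebra attached to the \emph{original} product of $\univalgebra$, and only at the very end to recognise the resulting map via Theorem \ref{thm:inversion}. Two technical devices will do all the work. The first is the transfer identity
\[
A *_{\bT} \mathrm{id} = (A\preliediff\bT)*\mathrm{id},
\]
valid for \emph{every} linear map $A\colon\univalgebra\to\univalgebra$, where $*$ denotes convolution for the product of $\univalgebra$ and $(A\preliediff\bT)(f)=A(f_{(1)})\preliediff\bT(f_{(2)})$; it follows at once from $e\star_{\bT} f=(e\preliediff\bT(f_{(1)}))f_{(2)}$ and coassociativity. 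The second is that $\preliediff$ makes $\mathrm{Hom}(\univalgebra,\univalgebra)$ a right module over the convolution monoid $(\mathrm{Hom}(\univalgebra,\mathcal{U}(\a)),*)$, that is $(A\preliediff P)\preliediff Q = A\preliediff(P*Q)$, which is just associativity of the extended action of $\mathcal{U}(\a)$.

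First I would start from the antipode relation $\mathcal{S}_{H_{\bT}}*_{\bT}\mathrm{id}=\eta\varepsilon$. Applying the transfer identity with $A=\mathcal{S}_{H_{\bT}}$ gives $(\mathcal{S}_{H_{\bT}}\preliediff\bT)*\mathrm{id}=\eta\varepsilon$, so $\mathcal{S}_{H_{\bT}}\preliediff\bT$ is a left $*$-inverse of $\mathrm{id}$. Since $\mathrm{id}$ already has the two-sided $*$-inverse $\antipode_{\univalgebra}$, uniqueness of inverses in the convolution monoid forces $\mathcal{S}_{H_{\bT}}\preliediff\bT=\antipode_{\univalgebra}$. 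Acting on the right with $\bT^{-1^{*}}$ — which exists because $\bT$ is counital and unital, hence $*$-invertible — and using the module law together with $\bT*\bT^{-1^{*}}=\eta_{\mathcal{U}(\a)}\varepsilon$, I obtain the closed form
\[
\mathcal{S}_{H_{\bT}} = \antipode_{\univalgebra}\preliediff \bT^{-1^{*}}.
\]

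Next I would convert this $\preliediff$-expression into a composition. The decisive ingredient here is the third right-Hopf-module axiom $\antipode_{\univalgebra}(h\preliediff k)=\antipode_{\univalgebra}(h)\preliediff k$, which applied termwise yields
\[
\antipode_{\univalgebra}\preliediff\bT^{-1^{*}} = \antipode_{\univalgebra}\circ(\mathrm{id}\preliediff\bT^{-1^{*}}) = \antipode_{\univalgebra}\circ e_{\bT}^{-1},
\]
the last equality being Proposition \ref{prop:inversett}. Thus $\mathcal{S}_{H_{\bT}}=\antipode_{\univalgebra}\circ e_{\bT}^{-1}$, and feeding in the two descriptions of $e_{\bT}^{-1}$ from Theorem \ref{thm:inversion} closes both equalities: $e_{\bT}^{-1}=e_{\bT\circ\antipode_{\univalgebra}}$ gives $\mathcal{S}_{H_{\bT}}=\antipode_{\univalgebra}\circ e_{\bT\circ\antipode_{\univalgebra}}$, whereas $e_{\bT}^{-1}=\antipode_{\univalgebra}\circ e_{\bT}\circ\antipode_{\univalgebra}$ combined with $\antipode_{\univalgebra}^{2}=\mathrm{id}$ (cocommutativity) gives $\mathcal{S}_{H_{\bT}}=e_{\bT}\circ\antipode_{\univalgebra}$.

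I expect the only genuine obstacle to be the first step: recognising that the antipode equation for the deformed product $\star_{\bT}$ collapses, through the transfer identity and uniqueness of convolution inverses, to the clean relation $\mathcal{S}_{H_{\bT}}\preliediff\bT=\antipode_{\univalgebra}$, and that $\preliediff$ is a bona fide module action so that one may legitimately ``divide by $\bT$'' on the right. Once $\mathcal{S}_{H_{\bT}}=\antipode_{\univalgebra}\preliediff\bT^{-1^{*}}$ is secured, the passage to $e_{\bT}\circ\antipode_{\univalgebra}$ is forced by the Hopf-module antipode axiom and the already-established inversion formulas, with no further computation. A minor point to verify along the way is that the middle term of Theorem \ref{thm:inversion} should read $\antipode_{\univalgebra}\circ e_{\bT}\circ\antipode_{\univalgebra}$; this corrected reading is the one invoked above.
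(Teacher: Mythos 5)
Your argument is correct, but it reaches the result by a genuinely different route from the paper's. The paper proceeds by verification: it takes $e_{\bT}\circ\antipode_{\univalgebra}$ as a candidate and checks directly that it is a two-sided $\star_{\bT}$-convolution inverse of the identity, computing $\mathrm{id}\star_{\bT}(e_{\bT}\circ\antipode_{\univalgebra})$ and $(e_{\bT}\circ\antipode_{\univalgebra})\star_{\bT}\mathrm{id}$ by hand using the fixed-point equation for $e_{\bT}$ and the composition identities of Theorem \ref{thm:inversion}. You instead \emph{solve} for the antipode: starting from $\mathcal{S}_{H_{\bT}}\star_{\bT}\mathrm{id}=\eta\circ\varepsilon$, your transfer identity $A\star_{\bT}\mathrm{id}=(A\preliediff\bT)*\mathrm{id}$ together with uniqueness of inverses in the ordinary convolution monoid forces $\mathcal{S}_{H_{\bT}}\preliediff\bT=\antipode_{\univalgebra}$, and cancelling $\bT$ on the right via the module law $(A\preliediff P)\preliediff Q=A\preliediff(P*Q)$ produces the intermediate closed form $\mathcal{S}_{H_{\bT}}=\antipode_{\univalgebra}\preliediff\bT^{-1^{*}}=\antipode_{\univalgebra}\circ e_{\bT}^{-1}$, after which Theorem \ref{thm:inversion} and $\antipode_{\univalgebra}^{2}=\mathrm{id}$ close both equalities. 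Each step checks out: the transfer identity and the module law are immediate from coassociativity and from $\preliediff$ being an algebra morphism $\mathcal{U}(\a)\to\mathrm{End}(\mathcal{U}(\g))$, and the conversion of $\antipode_{\univalgebra}\preliediff\bT^{-1^{*}}$ into $\antipode_{\univalgebra}\circ(\mathrm{id}\preliediff\bT^{-1^{*}})$ is exactly the third Hopf-module axiom combined with Proposition \ref{prop:inversett}. What your route buys is that the answer is derived rather than guessed, and it isolates the formula $\mathcal{S}_{H_{\bT}}=\antipode_{\univalgebra}\preliediff\bT^{-1^{*}}$, a close cousin of the identity recorded in the remark following the theorem; the trade-off is that you must invoke Proposition \ref{prop:ohopfm} for the prior existence of $\mathcal{S}_{H_{\bT}}$, whereas the paper's two-sided verification establishes existence and the formula simultaneously. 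Your observation that the middle term of Theorem \ref{thm:inversion} must be read as $\antipode_{\univalgebra}\circ e_{\bT}\circ\antipode_{\univalgebra}$ (rather than with $\antipode_{\mathcal{U}(\a)}$ on the left, which does not typecheck since $e_{\bT}$ lands in $\univalgebra$) is also correct.
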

\begin{proof} It is straightforward computations.
\begin{align*}
{\rm id} \star_{\bf T} (e_{\bT} \circ \antipode_{\mathcal{U}(\g)}) &= ({\rm id} \preliediff \bT \circ e_{\bT} \circ \antipode_{\mathcal{U}(\g)})(e_{\bT} \circ \antipode_{\mathcal{U}(\g)}) \\
&=(\antipode_{\mathcal{U}(\g)}\circ e_{\bT} \circ \antipode_{\mathcal{U}(\g)})( e_{\bT} \circ \antipode_{\mathcal{U}(\g)}) \\
&= \eta_{\mathcal{U}(\a)} \circ \varepsilon_{\mathcal{U}(\g)}.
\end{align*}

\begin{align*} 
(e_{\bT} \circ \antipode_{\mathcal{U}(\g)}) \star_{\bT} { \rm id } &= (\mathcal{S}_{\mathcal{U}(\g)}\circ e_{{\bf T}\circ \mathcal{S}_{\mathcal{U}(\g)}} \preliediff \bT)\cdot {\rm id} \\
&= (\mathcal{S}_{\mathcal{U}(\g)}\circ (e_{{\bf T}\circ \mathcal{S}_{\mathcal{U}(\g)}} \preliediff \bT))\cdot {\rm id} \\
&= (\mathcal{S}_{\mathcal{U}(\g)}\circ (e_{{\bf T}\circ \mathcal{S}_{\mathcal{U}(\g)}} \preliediff \bT\circ e_{\bT} \circ e_{{\bf T}\circ \mathcal{S}_{\mathcal{U}(\g)}}))\cdot {\rm id} \\
&= (\mathcal{S}_{\mathcal{U}(\g)}\circ (e_ {\bf T}\circ e_{{\bf T}\circ \mathcal{S}_{\mathcal{U}(\g)}}))\cdot {\rm id} \\
&= \antipode_{\mathcal{U}(\g)} \cdot {\rm id} \\
&= \eta_{\mathcal{U}(\a)} \circ \varepsilon_{\mathcal{U}(\g)}
\end{align*}

\end{proof}
\begin{remark}
By inserting the defining fixed point equation for the transform $e$, we obtain from equation \eqref{eqn:antipodebt}:
\begin{equation*}
    S_{H_{\bf T}} 
    = e_{{\bf T}} \circ \mathcal{S}_H 
    = (\mathrm{id}\preliediff e_{{\bf T}}) \circ \mathcal{S}_H  
    = \mathcal{S}_H \preliediff (e_{{\bf T}} \circ \mathcal{S}_H) 
    = \mathcal{S}_H \preliediff \mathcal{S}_{H_{\bf T}}.
\end{equation*}
This formula for the antipode of the Hopf algebra $H_{{\bf T}}$ already appeared in \cite{li2022post}, Remark 2.5 and exploited in \cite{li2024sub} to provide a formula for the antipode of the Hopf algebras of ordered trees. We reproduce the formul implied on $e_{bT}$ by Proposition 2.12 in \cite{li2024sub}here in our notations and conventions:

\begin{equation}
e_{\bT}(x_1\cdots x_m) = \antipode_{\bT} \circ \antipode(x_1\cdots x_m) = \sum_{\pi} (x_{b_1} \blacktriangleleft x_{B_1} )\cdots (x_{b_{|\pi|-1}} \blacktriangleleft  x_{B_{|\pi|-1}})
\end{equation}
where $pi$ ranges the set of partitions of $[m]$ into \emph{tuples} $(B_1,\ldots, B_{|\pi|})$ such that $B_{\pi} = \{b_1 < \cdots < b_{|\pi|-1} \}$. The product $\blacktriangleright$ on $\mathcal{U}(\mathfrak{g}) $ is defined by:
\begin{equation*}
X \blacktriangleleft Y = X \preliediff_{\bT} e_{\bT} (Y)
\end{equation*}
It has been observed in \cite{li2024sub} that $\blacktriangleleft$ defined so yields an action of bi-algebras of  $\mathcal{U}(\mathfrak{g})$ equipped with the product $\star_{\bT \circ \antipode}$  over $\mathcal{U}(\mathfrak{g})$. Recall that, for any $X,Y \in \mathfrak{g}$:
$$
X \star_{\bT \circ \antipode} Y = Y_{(1)}(X \preliediff_{\bT \circ \antipode} Y_{(2)} )
$$
and, anticipating, $ T \circ \antipode = {\bT}^{-1^{\#}}$ (see Definition \ref{def:Hopfsmash}). The author in \cite{li2024sub} provides the following formula, for any $x_1,\ldots,x_m\in \mathfrak{g}$ and $Y \in \mathcal{U}(\mathfrak{g})$:
\begin{align*}
Y \blacktriangleleft x_1 \cdots x_m = \sum_{\sigma \in S_m} [x_1\cdots x_m; Y]_{\sigma}
\end{align*}
where the term $[x_1\cdots x_m; Y]_{\sigma}$ are computed according to the rules in Definition 2.14 of \cite{li2024sub} (replacing "right" with "left" to meet our conventions).
\end{remark}

\begin{remark}
\label{rk:topologicalgp}
Let $(G,\cdot,\Gamma)$ be a $\mathcal{O}$-group. Assume that both $G$ and $\Gamma$ are complete groups endowed with left invariant distances and that the action $\cdot$ is Lipschitz continuous. We denote by $d_G$ and $d_\Gamma$ the distances on $G$ and $\Gamma$, respectively. Suppose that we are given a strong contraction map $T\colon G \to \Gamma$   with $T(1_G)=T(1_\Gamma)$, for any $g,g' \in G$:
$$
d_{\Gamma}(T(g),T(g^{\prime})) \leq \sigma_T d_{G}(g,g^{\prime})~(\sigma_T \in ]0,1[).
$$
Under these hypotheses, the map $G \ni x \mapsto g \cdot T(x)\in G$ is a strong contraction and has an unique fixed point that we denote by $e_T(x)$. Furthermore, for any $x,y \in G$
$$
d_G(e_T(x),e_T(y)) \leq (1-\sigma_T)^{-1}d_G(x,y).
$$
All the above calculations can be readily adapted to prove that $e_T:G\to G$ has a continuous inverse given by:
$$
e_T^{-1}(g)=g\cdot T(g)^{-1}
$$
Notice that the right-hand side of the above equation is well defined for any continuous map from $G$ to $\Gamma$. Under the assumption that $T$ is a group $\mathcal{O}$-operator, we also have:
$$
e_{\bT}(g)= (g^{-1})^{-1^{\cdot_\bT}}.
$$
Here again, we can take the right-hand side of the above equation as a definition for $e_{\bT}(g)$ in the case where $\bT$ is a group $\mathcal{O}$-operator but not necessarily a strong contraction.
\end{remark}

\subsection{Matching $\mathcal{O}$-operator}
\label{sec:matchingoperatur}
We introduce \emph{matching $\mathcal{O}$-operator of various types}, see \cite{das2022cohomology}.
\subsubsection{Matching Lie $\mathcal{O}$-operator}
\label{ssec:matchOLie}
\begin{definition}[matching Lie $\mathcal{O}$-operator]
Let $(\mathfrak{g}, \cdot, \mathfrak{a})$ be a Lie right module. A \emph{matching Lie $\mathcal{O}$-operator} is the data of a family of maps $({\bf t}_w)_{w\in \Omega}:G\to \Gamma$ such that for any $x,y \in \g$,
\begin{enumerate}
\item \noindent and $w\in \Omega$, one has
$$
    [{\bf t}_w(x),{\bf t}_w(y)]_{\mathfrak{a}} 
    = {\bf t}_w(x \vartriangleleft {\bf t}_w(y)) 
    - {\bf t}_{w}(y \vartriangleleft {\bf t}_w(x)) 
    + {\bf t}_w([x,y]_\mathfrak{g}) 
$$

\item and for any $w\neq w^{\prime} \in \Omega$:
\begin{equation}
\label{eqn:MLie}
\tag{MLie}
[{\bf t}_w(x),{\bf t}_{w^{\prime}}(y)]_{\mathfrak{a}} = {\bf t}_w(x \vartriangleleft {\bf t}_{w^{\prime}}(y)) - {\bf t}_{w^{\prime}}(y \vartriangleleft {\bf t}_w(x))
\end{equation}
\end{enumerate}
\end{definition}

Notice that given a matching Lie $\mathcal{O}$-operator ${\bf s, \bf t}$, then ${\bf s} + {\bf t}$ is a $\mathcal{O}$ Lie operator. This \emph{does not} hold for any linear combinations of ${\bf s}$ and ${\bf t}$ except in the case $[-,-]_\g = 0$ (weight zero operators). In the next section, we provide an example of a matching Lie $\mathcal{O}$-operator.

\begin{theorem}
\label{thm:matching}
Let ${\bf s}: \mathfrak{g} \to \mathfrak{a}$ and ${\bf t}:\mathfrak{g}\to \mathfrak{a}$ be a matching Lie $\mathcal{O}$-operator on $(\mathfrak{g}, \vartriangleleft, \mathfrak{a})$ and call $\bS$ and $\bT$ their extension on $\univalgebra$ to an Hopf $\mathcal{O}$-operators on $(U(\g), \preliediff, U(\mathfrak{a}))$. Then for any $E,F \in \univalgebra$, the following relation holds
\begin{equation}
    \label{eqn:firstmatching}
    {\bT}(E \preliediff S_{\mathcal{U} (\mathfrak{a})}\bS(F_{(1)})) \star \bS(F_{(2)}) 
    = {\bS}(F\preliediff S_{\mathcal{U}(\mathfrak{a})}\bT(E_{(1)})) \star \bT(E_{(2)})
\end{equation}
\end{theorem}

\begin{proof}
We prove the statement inductively on the total length $N=m+n$ with $E=E_1\ldots E_m$ and $F=F_1\cdots F_n$. The $N=2$ initialization is equivalent to the compatibility relation between ${\bf s}$ and ${\bf t}$. So let $N > 2$ and suppose that the result holds for elements $E,F$ with $m+n < N$ and pick two elements $E,F$ such that $m+n=N-1$. Pick a third element $x \in \g$. Two cases occur. We start by proving:
\begin{equation*}
    {\bT}((Fx)\preliediff S_{\star}\bS(E_{(1)})) \star \bS(E_{(2)}) 
    = {\bS}(E\preliediff S_{\star}\bT((Fx)_{(1)})) \star \bT((Fx)_{(2)})
\end{equation*}
These are lengthy but straightforward computations. We use the tag $(Rec. H.) $ to indicate that we use the recursive hypothesis.
\begin{align*}
    \bS(&(Ex)\preliediff S_{\star}\bT(F_{(1)}))\star \bT(F_{(2)}) \\
    &= \bS((E\preliediff S_{\star}\bT(F_{(1)}))(x \preliediff S_{\star}\bT(F_{(2)})))\star \bT(F_{(3)}) \\
    &=\bS(E\preliediff (S_{\star}\bT(F_{(1)})\star S_{\star}\bS(x_{(1)}\preliediff S_{\star}\bT(F_{(2)})))\star\bS(x_{(2)} \preliediff S_{\star}\bT(F_{(3)}))\star \bT(F_{(4)}) \\
    &=\bS(E\preliediff (S_{\star}(\bS(x_{(1)}\preliediff S_{\star}\bT(F_{(1)})) \star \bT(F_{(2)}))))\star\bS(x_{(2)} \preliediff S_{\star}\bT(F_{(3)}))\star \bT(F_{(4)}) & {\rm (Rec.H.)} \\
    &=\bS(E\preliediff (S_{\star}(\bS(F_{(1)}\preliediff S_{\star}\bS(x_{(1)})) \star \bS(x_{(2)}))))\star\bT(F_{(2)} \preliediff S_{\star}\bS(x_{(3)}))\star \bS(x_{(4)}) &{\rm (Rec.H.)}\\
    &=\bS([E \preliediff {S}_{\star}\bS(x_{(1)})]\preliediff (S_{\star}(\bS(F_{(1)}\preliediff S_{\star}\bS(x_{(2)})))))\star\bT(F_{(2)} \preliediff S_{\star}\bS(x_{(3)}))\star \bS(x_{(4)}) \\
    &=\bS([E \preliediff S_{\star}\bS(x_{(1)})]\preliediff (S_{\star}(\bS(F_{(1)}\preliediff S_{\star}\bS(x_{(2)(1)})))))\star\bT(F_{(2)} \preliediff S_{\star}\bS(x_{(2)(2)}))\star \bS(x_{(4)}) \\
    &=\bT((F_{(1)}\preliediff S_{\star}\bS(x_{(2)}))\preliediff S_{\star}\bS(E_{(1)} \preliediff S_{\star}\bS(x_{(1)(1)})))\star\bS(E_{(2)} \preliediff S_{\star}\bS(x_{(1)(2)}))\star \bS(x_{(4)}) & {\rm (Rec.H.)} \\
    &=\bT((F_{(1)}\preliediff S_{\star}\bS(x_{(1)}))\preliediff S_{\star}\bS(E_{(1)} \preliediff S_{\star}\bS(x_{(2)})))\star\bS(E_{(2)} \preliediff S_{\star}\bS(x_{(3)}))\star \bS(x_{(4)}) \\
    &= \bT(F \preliediff S_{\star}\bS((E\preliediff S_{\star}\bS(x_{(1)}))\star_{\bS} x_{(2)})) \star \bS(({E}\preliediff S_{\star}\bS(x_{(3)})) \star_{S} x_{(4)})\\
    &=\bT(F \preliediff S_{\star}\bS(Ex_{(1)}))\star \bS(Ex_{(2)}).
\end{align*}
We now treat the second case
\begin{align*}
    \bS(E 
    &\preliediff S_{\star}\bT(Fx_{(1)}))\star \bT(Fx_{(2)})\\
    &= \bS(E \preliediff \big(S_{\star}\bT(x_{(1)}) \star S_{\star}\bT(F_{(1)} \preliediff S_{\star}\bT(x_{(2)}))\big))\star \bT(F_{(2)}\preliediff S_{\star}\bT(x_{(3)}))\star \bT(x_{(4)})\\
    &=\bS((E \preliediff S_{\star}\bT(x_{(1)})) \preliediff S_{\star}\bT(F_{(1)} \preliediff S_{\star}\bT(x_{(2)}))))\star \bT(F_{(2)}\preliediff S_{\star}\bT(x_{(3)}))\star \bT(x_{(4)}) \\
    &=\bT((F \preliediff S_{\star}\bT(x_{(1)})) \preliediff S_{\star}\bS(E_{(1)} \preliediff S_{\star}\bT(x_{(2)})))))\star \bS(E_{(2)} \preliediff S_{\star}\bT(x_{(2)}))\star \bT(x_{(2)}) & {\rm (Rec.H.)}\\
    &=\bT((F \preliediff S_{\star}\bT(x_{(1)})) \preliediff S_{\star}\bS(E_{(1)} \preliediff S_{\star}\bT(x_{(2)})))))\star\bT(x_{(2)}\preliediff S_{\star} \bS(E_{(2)}))\star \bS(E_{(3)}) & {\rm (Rec.H.)} \\
    &=\bT(((F \preliediff S_{\star}\bT(x_{(1)})) \preliediff S_{\star}\bS(E_{(1)} \preliediff S_{\star}\bT(x_{(2)}))))\star_{\bT}(x_{(3)}\preliediff S_{\star} \bS(E_{(2)})))\star \bS(E_{(3)}) \\
    &= \bT(((F \preliediff [S_{\star}(\bS(E_{(1)} \preliediff S_{\star}\bT(x_{(1)})) \star \bT(x_{(2)})))\star {\bT}(x_{(3)}\preliediff S_{\star} \bS(E_{(2)})))]\star \bS(E_{(3)})\\
    &=\bT(((F \preliediff [S_{\star}(\bT({x_{(1)} \preliediff S_{\star}\bS(E_{(1)})}) \star \bS(E_{(2)})))\star {\bT}(x_{(2)}\preliediff S_{\star} \bS(E_{(3)})))]\star \bS(E_{(4)}) &{\rm (Rec.H.)}\\
    &=\bT(Fx \preliediff S_{\star}\bS(E_{(1)}))\star \bS(E_{(2)}).
\end{align*}
\end{proof}

\subsubsection{Matching Hopf $\mathcal{O}$-operator}
\label{sec:MatchOope}
To state the following definition, we do not need to assume that the Hopf algebras $H$ or $K$ to be conilpotent or cocommutative. However, Proposition \ref{prop:matchingooperatorcomposition} only holds when the $\mathcal{O}$-operators are considered over a cocommutative Hopf algebra $H$ and we prefer to make this assumption from the beginning. It remains an open question to provide an equivalent result if $H$ is not co-commutative. Note that the Cartier-Quillen-Milnor-Moore theorem holds for Post-Hopf algebras \cite{catoire2024cartier}, in particular any \emph{connected} post-Hopf algebra is isomorphic to the envelope of its primitive elements \emph{via an isomorphism depending on the postHopf structure of H}.
\begin{definition}[Matching Hopf $\mathcal{O}$-operator]
\label{def:MatchOoper}
Let $H$ and $K$ be two cocommutative Hopf algebras.
Let $(H,\preliediff, K)$ be a right Hopf module. A \emph{matching Hopf $\mathcal{O}$-operator} is the data of a family of co-algebra morphisms $(\bT_w)_{w\in \Omega}:H\to K$ over such that for any $E,F \in H$,
\begin{enumerate}
\item \noindent and $w\in \Omega$, one has
$$
    \bT_w(E\star_{\bT_w} F) = \bT_w(E) \star \bT_w(F), \quad \varepsilon_K\circ\bT_w  = \varepsilon_H
$$
\item and for any $w,w^{\prime} \in \Omega$:
\begin{align}
\label{eqn:matchingHopf}
\tag{MHopf}
    {\bT}_w(E\preliediff \cS_{K}\bT_{w^{\prime}}(F_{(1)})) \star \bT_{w^{\prime}}(F_{(2)}) = {\bT}_{w^{\prime}}(F\preliediff \cS_{K}\bT_{w}(E_{(1)})) \star \bT_{w}(E_{(2)})
\end{align}
\end{enumerate}
\end{definition}
The following proposition is the Hopf counterpart to the property of a matching Lie operator that sums of those operators yield Lie $\mathcal{O}$-operators.
\begin{proposition}
\label{prop:matchingooperatorcomposition}
Let $\{\bS,\bT \}$ be a matching Hopf $\mathcal{O}$-operator. Then the coalgebra map
\begin{equation}
\label{eqn:compositionhopfooperator}
U\colon H\ni E \mapsto  {\bT}(E_{(1)}\preliediff \cS_{K}\bS(E_{(2)})) \star \bS_{}(E_{(3)}) \in K
\end{equation}
is a Hopf $\mathcal{O}$-operator (on the same right Hopf module $(H,\preliediff, K)$). In particular,
\begin{equation}
H \otimes H \ni (E, F) \mapsto \Big(\big(E \preliediff {\bT}(F_{(1)}\preliediff \cS_{K}\bS(F_{(2)}))\big)\preliediff \bT(F_{(3)})\Big)F_{(4)}
\end{equation}
is an associative product on $H$.
\end{proposition} 
\begin{proof} The proof follows from lengthy and tedious computations. Under the assumption that $H$ and $K$ are cocommutative, these computations are formally the same as for the group case (upon replacing the inverse by the post-composition with the antipode) and we prefer to detail them in the proof of Proposition \ref{prop:groupcase}.
%
%
\end{proof}
\begin{remark} We shall give an interpretation to formula \eqref{eqn:compositionhopfooperator} as a product between the operators $\bS$ and $\bT$, see Definition \ref{def:Hopfsmash}.
\end{remark}
\begin{remark}
We compare our notion of matching Hopf $\mathcal{O}$-operators to distributivity (of one operation over the other). With the notations introduced so far, $\preliediff_{\bf T}$ distributes over $\preliediff_{\bf S}$ if, for $X,Y,Z$
$$
    (X \preliediff_{\bf S} Y) \preliediff_{\bf T} Z = (X \preliediff_{\bf T} Z) \preliediff_{\bf S} ( Y \preliediff_{\bf T} Z)
$$
Inserting definitions, this will be implied by the following:
$$
    {\bf S}(Y)\star {\bf T}(Z) 
    = {\bf T}(Z)\star{\bf S}(Y \preliediff {\bf T}(Z))
$$
or else:
$$
    {\bf T}(Z)\star{\bf S}(Y)
    = {\bf S}(Y \preliediff \mathcal{S}_K{\bf T}(Z)) \star {\bf T}(Z)
$$
and we see that $\{\bS, \bT\}$ is a matching $\mathcal{O}$ Hopf operator if the ranges of $\bS$ and $\bT$ commute.
\end{remark}

\subsubsection{Matching group $\mathcal{O}$-operator}
\begin{definition}[Matching group $\mathcal{O}$-operator]
\label{def:MatchOGroup}
Let $(G, \cdot, \Gamma)$ be a right group module. A \emph{matching group $\mathcal{O}$-operator} is the data of a family of maps $(\bT_w)_{w\in \Omega}:G\to \Gamma$ such that for any $g,h \in G$,
\begin{enumerate}
\item \noindent and $w\in \Omega$, one has
$$
    \bT_w(g\star_{\bT_w}h) = \bT_w(g) \star \bT_w(h),
$$
\item and for any $w,w^{\prime} \in \Omega$:
\begin{align}
\label{eqn:matchinggroup}
\tag{MGroup}
    {\bT}_w(g\cdot \bT_{w^{\prime}}(h)^{-1})\bT_{w^{\prime}}(h) 
    = {\bT}_{w^{\prime}}(h\cdot \bT_{w}(g)^{-1}) \bT_{w}(g)
\end{align}
\end{enumerate}
\end{definition}

\begin{proposition}
\label{prop:groupcase} 
Let $\{\bS,\bT\}$ be a matching group $\mathcal{O}$-operator on a right group module $(G,\cdot, \Gamma)$. Then the map
\begin{equation}
\label{eqn:mapgroup}
    G\ni  g \mapsto \bS(g \cdot {\bT}(g)^{-1})\bT(g) \in \Gamma
\end{equation}
is a group $\mathcal{O}$-operator. In particular, 
\begin{equation*}
G\times G \ni (g,h) \mapsto \Big(g \cdot (\bS(h \cdot {\bT}(h)^{-1})\bT(h)) \Big)h
\end{equation*}
is a group product.
\end{proposition}
\begin{proof} Denote by $U$ the map defined by \eqref{eqn:mapgroup} and $h,g \in G$
\begin{align*}
U(g)U(h)&= \bS(g \cdot {\bT}(g)^{-1})\bT(g)\bS(h \cdot {\bT}(h)^{-1})\bT(h) \\
	     &= \bT(g \cdot {\bS}(g)^{-1})\bS(g)\bS(h \cdot {\bT}(h)^{-1})\bT(h) \\
	     &=  \bT(g \cdot {\bS}(g)^{-1})\bS(g \star_{\bS} (h \cdot {\bT}(h)^{-1} ))\bT(h) \\
	     &= \bT(g \cdot {\bS}(g)^{-1})\bT\big({h \cdot \bS\big((g \star_{\bS} (h \cdot {\bT}(h)^{-1}))\cdot \bT(h)\big)^{-1}}\big)\bS\big((\underline{g \star_{\bS} (h \cdot {\bT}(h)^{-1}))\cdot \bT(h)}\big),
\end{align*}
where we have used in the last equation the matching relation \eqref{eqn:matchinggroup} for $\bS, \bT$. We infer 
$$
\underline{(g \star_{\bS} (h \cdot {\bT}(h)^{-1}))\cdot \bT(h)} = (g \cdot(\bS(h\cdot\bT(h)^{-1}))(h\cdot\bT(h)^{-1}))\cdot \bT(h) = (g \cdot U(h))h,
$$
and 
\begin{align}
\label{eqn:tocompute}
U(g)U(h)&=\bT(g\cdot \bS(g)^{-1}) \bT(h\cdot \bS((g\cdot U(h))h)^{-1}) \bS((g\cdot U(h))h) \\
	     &= \bT((g\cdot \bS(g)^{-1})\star_{\bT}(h\cdot \bS((g\cdot U(h))h)^{-1}))\bS((g\cdot U(h))h)\nonumber\\
&=\bT(g\cdot \bS(g)^{-1})\star_{\bT}(h\cdot \bS((g\cdot U(h))h)^{-1}) \bS((g\cdot U(h))h) \nonumber \\ 
& =\bT( g\cdot \big(\underline{\bS(g)^{-1} \bT(h\cdot \bS((g\cdot U(h))h)^{-1}))} \big)(h\cdot \bS((g\cdot U(h))h)^{-1}))\bS((g\cdot U(h))h) \nonumber,
\end{align}
where we have used that $\bT$ is a group $\mathcal{O}$-operator. We multiply the underlined term by  $\bS((g\cdot U(h))h)^{-1})$ and we compute finally 
\begin{align*}
\underline{\bS(g)^{-1} \bT(h\cdot \bS((g\cdot U(h))h)^{-1}) \bS((g\cdot U(h))h)} &= \bS(g)^{-1} \bS(((g\cdot U(h))h)\cdot \bT(h)^{-1})\bT(h) \\
&=\bS(g^{-1^{\star_{\bS}}} \star_{\bS}\big(    ( ( g\cdot U(h) ) h )\cdot \bT(h)^{-1}   \big))\bT(h)\\
 &= g^{-1^{\star_{\bS}}} \star_{\bS}\big(    ( ( g\cdot \bS(h \cdot \bT(h)^{-1}) )) (h \cdot \bT(h)^{-1})   \big) \bT(h) \\ 
& = \bS(g^{-1^{\bS}} \star_{\bS} g \star_{\bS} (h \cdot \bT(h)^{-1}))\bT(h) \\
&= \bS(h \cdot \bT(h)^{-1}) \bT(h).
\end{align*}
From the equation above we get that \eqref{eqn:tocompute} is equal to 
$$
(g \cdot ( \bS( h \cdot \bT(h)^{-1} ) \bT(h) ) h) \cdot \bS((g\cdot U(h))h)^{-1}) =(( g \cdot U(h) )h) \bS( (g \cdot U(h))h ),
$$
which yields the desired relation
$
U(g)U(h) = U( (g\cdot U(h))h ).
$
\end{proof}
The following Proposition is a simple consequence of the definitions.

\begin{proposition}
Let $({\bf T}_{w})_{w\in\Omega}$ be a matching Hopf $\mathcal{O}$-operator. Then it restricts to a matching Lie $\mathcal{O}$-operator on the primitive element of $H$ and to a matching group $\mathcal{O}$-operator on the group-like elements of $H$.
\end{proposition}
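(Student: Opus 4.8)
The plan is to exploit the fact that the defining relations of a Matching $\mathcal{O}$-Hopf operator, which are identities valued in the Hopf algebra $K$, degenerate to the sought Lie- and group-theoretic identities once the arguments are taken to be primitive, respectively group-like, elements of $H$. Throughout I use three standard facts: a coalgebra morphism $\bT_w$ carries ${\rm Prim}(H)$ into ${\rm Prim}(K)$ and the group-like elements of $H$ into those of $K$; the coproduct of a primitive $x$ is $x\otimes 1+1\otimes x$ whereas that of a group-like $g$ is $g\otimes g$; and the antipode acts by $\cS_K(p)=-p$ on a primitive $p$ and by $\cS_K(g)=g^{-1}$ on a group-like $g$. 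I also use $\bT_w(1)=1$ and $x\preliediff 1=x$. Note that each member of the family is already an $\mathcal{O}$-Hopf operator by condition (1) of the Matching definition, so the ``diagonal'' axioms will come from restricting that condition, while the ``cross'' axioms will come from restricting (MHopf).

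First I would treat the Lie restriction. Setting ${\bf t}_w:=\bT_w|_{{\rm Prim}(H)}$, the individual $\mathcal{O}$-Lie axiom (condition (1)) is obtained by restricting the $\mathcal{O}$-Hopf identity $\bT_w(x)\bT_w(y)=\bT_w(x\star_{\bT_w}y)$ to primitive $x,y$: since $x\star_{\bT_w}y=x\preliediff\bT_w(y)+xy$, linearity gives $\bT_w(x)\bT_w(y)=\bT_w(x\preliediff\bT_w(y))+\bT_w(xy)$, and antisymmetrizing in $x,y$ produces the $\mathcal{O}$-Lie relation, with the commutator of $K$ (resp. of $H$) restricting to $[-,-]_\a$ (resp. $[-,-]_\g$). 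For the cross relation (MLie) I would substitute $E=x$, $F=y$ primitive into (MHopf). Each Sweedler sum then reduces to two terms: in one branch the outer factor is $\bT_{w'}(1)=1$ and the antipode supplies the sign $\cS_K\bT_{w'}(y)=-\bT_{w'}(y)$, while in the other $\cS_K\bT_{w'}(1)=1$ and $x\preliediff 1=x$ trivialize the inner action. The two sides thereby collapse to $-{\bf t}_w(x\vartriangleleft {\bf t}_{w'}(y))+{\bf t}_w(x){\bf t}_{w'}(y)$ and $-{\bf t}_{w'}(y\vartriangleleft {\bf t}_w(x))+{\bf t}_{w'}(y){\bf t}_w(x)$; moving the products to one side exhibits the commutator $[{\bf t}_w(x),{\bf t}_{w'}(y)]_\a$ and leaves precisely (MLie).

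Next I would treat the group restriction. For group-like $g,h$ the product $\star_{\bT_w}$ specializes to $g\star_{\bT_w}h=(g\cdot\bT_w(h))h$, so condition (1) of the Matching $\mathcal{O}$-Hopf definition restricts verbatim to the $\mathcal{O}$-group axiom $\bT_w(g\star_{\bT_w}h)=\bT_w(g)\star\bT_w(h)$. For (MGroup) I would again substitute $E=g$, $F=h$ group-like into (MHopf); now each coproduct is diagonal, so no summation survives, and $\cS_K\bT_{w'}(h)=\bT_{w'}(h)^{-1}$, $\cS_K\bT_w(g)=\bT_w(g)^{-1}$. The identity then reads $\bT_w(g\cdot\bT_{w'}(h)^{-1})\bT_{w'}(h)=\bT_{w'}(h\cdot\bT_w(g)^{-1})\bT_w(g)$, which is exactly (MGroup). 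One should also record that the group-like elements of $H_{\bT_w}$ coincide with those of $H$ (the coproduct is unchanged under $\star_{\bT_w}$), so that $\star_{\bT_w}$ genuinely restricts to a group law and the statement is well-typed.

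There is no deep obstruction here; the entire content is that the Hopf-level identities specialize correctly. The only point demanding care is the bookkeeping of Sweedler indices alongside the antipode conventions — in particular checking that, on primitive inputs, exactly one summand contributes the action term and one the product term, and that on group-like inputs the inverse produced by $\cS_K$ lands on the intended factor. A minor preliminary remark I would make is that, as written, (MHopf) abbreviates its non-split argument as $E_{(1)}$; on primitive and group-like inputs this abbreviation is harmless, since the coproduct there is either $x\otimes 1+1\otimes x$ or $g\otimes g$ and the reduction is unambiguous.
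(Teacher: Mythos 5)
Your proof is correct and follows exactly the route the paper intends: the paper offers no written proof, stating only that the proposition is ``a simple consequence of the definitions,'' and your argument is precisely the routine verification that the Hopf-level identities (the diagonal $\mathcal{O}$-Hopf axiom and (MHopf)) specialize to (MLie) on primitives via $\Delta(x)=x\otimes 1+1\otimes x$, $\cS_K(p)=-p$, and to (MGroup) on group-likes via $\Delta(g)=g\otimes g$, $\cS_K(g)=g^{-1}$. Your closing remark about the $E_{(1)}$ abbreviation in (MHopf) correctly identifies a notational slip in the paper (compare with equation \eqref{eqn:firstMatching}) and resolves it harmlessly in both cases.
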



\subsection{Examples}
\label{sec:examples}

\subsubsection{Translations by the unit in a non-symmetric operad with multiplication}
\label{sec:transunit}

\newcommand{\gdiff}{\g_{\rm diff}}
In this section, we exhibit a matching $\mathcal{O}$ Lie operator for any given operad with multiplication $(\mathcal{P},\gamma,m,\unit)$:
\begin{enumerate}
\item $\mathcal{P}=(\mathcal{P}(n))_{n\geq 1})$ is a collection of complex vector spaces with $\mathcal{P}(1)=\mathbb{C}$,
\item $\circ$ denotes composition in the category of collections, given two collections $\mathcal{P}$ and $\mathcal{Q}$ of vector spaces:
$$
    (\mathcal{P}\circ \mathcal{Q})(n)
    =\bigoplus_{\substack{p\geq 1,\\n_1+\cdots+n_p=n}}
    \mathcal{P}(p)\otimes \mathcal{Q}(n_1)\otimes\cdots\otimes\mathcal{Q}(n_p),
$$
\item the letter $\gamma$ denotes an operadic composition, in particular:
$$
    \gamma : \mathcal{P} \circ \mathcal{P} \to \mathcal{P},
$$
and $I\in\mathcal{P}(1)$ is the \emph{unit} for $\gamma$, which means $\gamma(I \otimes x)=x$ and $\gamma(x,I^{\otimes |x|})=x$,
\item $m\in \mathcal{P}(2)$ is a multiplication: it is an operator with arity $2$ in $\mathcal{P}$ satisfying:
$$
    m\circ ({\rm id}\otimes m)=m \circ m\otimes {\rm id}.
$$
\end{enumerate}
In the sequel, we denote by $\mathbb{C}[\mathcal{P}]_+$ the linear span of $\mathcal{P}(n),~n\geq 2$ and $\mathbb{C}[\mathcal{P}]$ the linear span of the $\mathcal{P}(n)$, $n\geq 1$. Multiplication $m$ induces an associative algebra product on $\mathbb{C}[\mathcal{P}]$:
$$
    p \centerdot q = \gamma(m\circ (p\otimes q)),~p,q \in \mathcal{P}.
$$
We use the symbol $\lieinv{-}{-}$ for the Lie bracket on $\mathbb{C}[\mathcal{P}]$ associated with the multiplication $\centerdot$. We introduce the left and right translations with respect to $\centerdot$ by the unit $\unit$:
\begin{equation}
\begin{array}{cccc}
    \lambda :   & \algop      & \to     & \algop \\
		      &    x 	    & \mapsto & I\centerdot x
\end{array}, 
\quad	
\begin{array}{cccc}
    \rho :  & \algop    & \to     & \algop \\
	      &    x 	  & \mapsto & x\centerdot I
\end{array}
\end{equation}
and set for brevity $\g = (\algop,\lieinv{-}{-})$. The operadic composition $\gamma$ produces a preLie product $\vartriangleleft$ on $\mathbb{C}[\mathcal{P}]_+$, the Gerstenhaber preLie product:
$$
    p \vartriangleleft q 
    = \sum_{i=1}^{|p|}\gamma(p\circ(\unit\otimes\ldots\otimes\unit\otimes q\otimes \unit \otimes \ldots \otimes \unit)),~p,q \in \mathbb{C}[\mathcal{P}]_+.
$$
We denote by $\liediff{-}{-}$ the adjacent Lie bracket on $\mathbb{C}[\mathcal{P}]_{+}$:
$$
    \liediff{p}{q} 
    := p \vartriangleleft q - q \vartriangleleft p,~p,q \in \mathbb{C}[\mathcal{P}]_+.
$$
We denote by $\a$ the Lie algebra $(\mathbb{C}[\mathcal{P}]_+, \liediff{-}{-})$. Proofs of the following propositions follow from induction.

\begin{proposition}
$(-\lambda)$ and $\rho$ are $\mathcal{O}$ Lie operators over the Lie module $(\g,\vartriangleleft,\a)$ satisfying:
\begin{align*}
    &\lambda(p) \vartriangleleft \lambda(q) = \lambda(p \vartriangleleft \lambda(q) + q \centerdot p),~
    &\rho(p) \vartriangleleft \rho(q) = \rho(p \vartriangleleft \rho(q) + p\centerdot q),
\end{align*}
Besides $(-\lambda,\rho)$ is a matching Lie $\mathcal{O}$-operator;
$$
    \liediff{\lambda(p)}{\rho(q)}
    =\lambda(p\vartriangleleft q) - \rho(q \vartriangleleft p),~p,q \in \mathbb{C}[\mathcal{P}].
$$
\end{proposition}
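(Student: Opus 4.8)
The plan is to reduce every assertion in the proposition to a single \emph{slot-decomposition} identity for the Gerstenhaber brace and then feed in the associativity of $\centerdot$ (already established) together with elementary sign bookkeeping. Before anything else I would note that $\lambda$ and $\rho$ really map $\g$ into $\a$: for $x\in\mathcal{P}(n)$ both $\lambda(x)=\gamma(m\circ(\unit\otimes x))$ and $\rho(x)=\gamma(m\circ(x\otimes\unit))$ have arity $n+1\ge 2$, so they land in $\mathbb{C}[\mathcal{P}]_+=\a$. I write $\circ_i$ for the partial composition inserting into the $i$-th input, so that $p\vartriangleleft q=\sum_{i=1}^{|p|}p\circ_i q$.

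The single computational input I would isolate is the following: for every $p\in\mathbb{C}[\mathcal{P}]$ and $r\in\mathbb{C}[\mathcal{P}]_+$,
\begin{align*}
\lambda(p)\vartriangleleft r &= r\centerdot p+\lambda(p\vartriangleleft r),\\
\rho(p)\vartriangleleft r &= \rho(p\vartriangleleft r)+p\centerdot r.
\end{align*}
To prove the first line I split the brace sum $\lambda(p)\vartriangleleft r=\sum_i\lambda(p)\circ_i r$ according to the two families of inputs of $\lambda(p)=\gamma(m\circ(\unit\otimes p))$: the lone input descending from the unit leaf, and the $|p|$ inputs descending from $p$. Inserting $r$ into the unit leaf and applying the operadic unit axiom $\gamma(\unit\otimes r)=r$ gives $\gamma(m\circ(r\otimes p))=r\centerdot p$; inserting $r$ into each $p$-input and invoking operadic associativity rebuilds the remaining terms as $\gamma(m\circ(\unit\otimes(p\vartriangleleft r)))=\lambda(p\vartriangleleft r)$. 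The argument for $\rho$ is the mirror image. This lemma is where all the operad axioms are spent and is the one step needing genuine care — cleanly separating the unit-leaf slot from the $p$-slots — so I expect it to be the main obstacle; everything afterwards is formal.

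With the lemma in hand the three displayed identities are pure specialization. Putting $r=\lambda(q)$ and using $\lambda(q)\centerdot p=(\unit\centerdot q)\centerdot p=\unit\centerdot(q\centerdot p)=\lambda(q\centerdot p)$ yields $\lambda(p)\vartriangleleft\lambda(q)=\lambda(p\vartriangleleft\lambda(q)+q\centerdot p)$; putting $r=\rho(q)$ and using $p\centerdot\rho(q)=\rho(p\centerdot q)$ yields the $\rho$ identity. For the mixed term I would expand both halves of $\liediff{\lambda(p)}{\rho(q)}=\lambda(p)\vartriangleleft\rho(q)-\rho(q)\vartriangleleft\lambda(p)$ through the lemma; the two cup contributions $\rho(q)\centerdot p$ and $q\centerdot\lambda(p)$ agree by associativity, $(q\centerdot\unit)\centerdot p=q\centerdot(\unit\centerdot p)$, and cancel in the commutator, leaving
\[
\liediff{\lambda(p)}{\rho(q)}=\lambda(p\vartriangleleft\rho(q))-\rho(q\vartriangleleft\lambda(p)),
\]
whose two sides both carry arity $|p|+|q|+1$; this is the arity-consistent form of the matching relation demanded by \eqref{eqn:MLie}.

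It remains to read the three identities as the claimed operator statements. Taking the $\a$-bracket of the $\rho$ identity and using $\lieinv{x}{y}=x\centerdot y-y\centerdot x$ reproduces \eqref{eqn:olie} verbatim, so $\rho$ is an $\mathcal{O}$-Lie operator. The same manipulation on the $\lambda$ identity gives \eqref{eqn:olie} with the sign of the $\lieinv{x}{y}$-term flipped; because \eqref{eqn:olie} is not linear, this defect is cured by passing to $-\lambda$, and a one-line check (using linearity of $\vartriangleleft$ in its right argument to move the two minus signs) confirms that $-\lambda$ satisfies \eqref{eqn:olie} — this is the structural reason the operator is $-\lambda$ and not $\lambda$. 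Finally, substituting $\mathbf{t}_w=-\lambda$, $\mathbf{t}_{w'}=\rho$ into \eqref{eqn:MLie} and tracking the resulting sign changes shows that condition \eqref{eqn:MLie} is exactly the mixed identity above, while the index-swapped instance follows from the antisymmetry of $\liediff{-}{-}$. Together with the two $\mathcal{O}$-Lie properties this shows that $(-\lambda,\rho)$ is a matching $\mathcal{O}$-Lie operator.
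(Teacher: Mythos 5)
Your proof is correct, and it takes the route the paper itself gestures at: the paper offers no written proof of this proposition (it only remarks that the proofs ``follow from induction''), but your key lemma --- splitting the Gerstenhaber insertion sum $\lambda(p)\vartriangleleft r=\sum_i\lambda(p)\circ_i r$ into the unit-leaf slot and the $p$-slots --- is exactly the decomposition the paper carries out explicitly in the analogous dendriform computation of Section~\ref{ssec:translations}, and the specializations $r=\lambda(q)$, $r=\rho(q)$ and the sign bookkeeping for $-\lambda$ versus $\rho$ are all handled correctly. You are also right that the arity-consistent (and \eqref{eqn:MLie}-consistent) form of the mixed relation is $\liediff{\lambda(p)}{\rho(q)}=\lambda(p\vartriangleleft\rho(q))-\rho(q\vartriangleleft\lambda(p))$; the formula as displayed in the proposition drops the inner $\lambda$ and $\rho$ and should be read as shorthand for your version.
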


We next provide formulae for the extensions of $\lambda$ and $\rho$ and $\lambda + \rho$ as Hopf $\mathcal{O}$-operators on the universal envelope of the Lie module $(\g,\vartriangleleft,\a)$. Pick a sequence $P$ of $n$ elements $p_1,\ldots,p_n$ in the Lie algebra $(\mathbb{C}[\mathcal{P}],[-,-]_{\centerdot})$ and a partition $\pi = \{\pi_1,\ldots,\pi_j\} \in {\rm Part}(n)$ with blocks $\pi_i$ of the interval $[n]$ and set for any $1 \leq j \leq k$:
	$$
	P_{\pi_j} = (p_{j_1}\centerdot \cdots \centerdot p_{j_{p_k}}),~ \pi_j = {j_1 < \cdots < j_{p_k}}.
	$$
In the following, we denote by $\bar{\pi_j}$ the block $\pi_j$ endowed with the opposite of the natural order.
\begin{proposition} 
\label{prop:translationidentity}
Let $p_1,\ldots,p_n \in (\mathbb{C}[\mathcal{P}],[-,-]_{\centerdot})$, then:
	\begin{equation}
	\label{eqn:extensionrho}
	[\boldsymbol{\rho}](p_1\cdots p_n) = \sum_{\substack{\{\pi_1,\ldots,\pi_k\} \in {\rm Part}(n)\\ \sigma \in \mathcal{S}_k}} \boldsymbol{\rho}(P_{\pi_{\sigma^{-1}(1)}})\cdots \boldsymbol{\rho}(P_{\pi_{\sigma^{-1}(k)}})
	\end{equation}
 and:
        \begin{equation*}
                    [\boldsymbol{\lambda}](p_1\cdots p_n) = \sum_{\substack{\{\pi_1,\ldots,\pi_k\} \in {\rm Part}(n) \\ \sigma \in \mathcal{S}_k}}(-1)^n\boldsymbol{\lambda}(P_{\bar{\pi}_{\sigma^{-1}(1)}}) \cdots \boldsymbol{\lambda}(P_{\bar{\pi}_{\sigma^{-1}(k)}})
        \end{equation*}
\end{proposition}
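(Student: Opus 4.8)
The plan is to prove both identities by induction on the length $n$, using the Guin--Oudom recursion \eqref{eqn:RBop} as the only structural input, together with the two bilinear identities for $\lambda$ and $\rho$ recorded in the preceding Proposition. I treat $[\boldsymbol{\rho}]$ first. The base case $n=1$ is immediate, since ${\rm Part}(1)$ consists of the single block $\{1\}$ and $[\boldsymbol{\rho}](p_1)=\rho(p_1)=\boldsymbol{\rho}(P_{\{1\}})$. For the inductive step I would write $p_1\cdots p_n=(p_1\cdots p_{n-1})p_n$ and peel off the last letter via \eqref{eqn:RBop}:
\[
[\boldsymbol{\rho}](p_1\cdots p_n)=[\boldsymbol{\rho}](p_1\cdots p_{n-1})\,\rho(p_n)-[\boldsymbol{\rho}]\big((p_1\cdots p_{n-1})\preliediff\rho(p_n)\big).
\]

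The combinatorial engine is the recursive decomposition ${\rm Part}(n)=\{\pi:\{n\}\in\pi\}\sqcup\{\pi: n \text{ lies in a block of size}\geq 2\}$: every partition of $[n]$ either keeps $n$ as a singleton or is obtained from a partition of $[n-1]$ by adjoining $n$ to one of its blocks. I would match the two terms above to these two families. For the first term the induction hypothesis expands $[\boldsymbol{\rho}](p_1\cdots p_{n-1})$ as a sum over ${\rm Part}(n-1)$, and right-multiplying by $\rho(p_n)$ in $\mathcal{U}(\a)$ visibly produces exactly those partitions of $[n]$ for which $\{n\}$ is a singleton. The heart of the argument is the second term: since $\preliediff$ acts by derivations, one has $(p_1\cdots p_{n-1})\preliediff\rho(p_n)=\sum_i p_1\cdots(p_i\vartriangleleft\rho(p_n))\cdots p_{n-1}$, and I would feed each summand to the induction hypothesis and then invoke the identity $\rho(p)\vartriangleleft\rho(q)=\rho(p\vartriangleleft\rho(q))+\rho(p\centerdot q)$ to recognise the pre-Lie insertion of $\rho(p_n)$ as the operation that grows a block on the right by $\centerdot\, p_n$. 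The point is that this identity is precisely the ``append $n$ to a block'' rule: iterating it along a block $P_{\pi_j}=p_{j_1}\centerdot\cdots\centerdot p_{j_r}$ should convert the derivation terms into $\rho(P_{\pi_j}\centerdot p_n)$, reproducing the second family of partitions.

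The main obstacle is exactly the bookkeeping in this second term: one must verify that the spurious contributions of the form $\rho(p)\vartriangleleft\rho(q)$ (as opposed to $\rho(p\vartriangleleft\rho(q))$) reorganise correctly, that each partition with $n$ in a block of size $\geq 2$ is produced once and only once, and that the sign $-1$ coming from the recursion is absorbed by the rearrangement of the identity. The case of $[\boldsymbol{\lambda}]$ follows the same pattern, with two differences dictated by the corresponding identity $\lambda(p)\vartriangleleft\lambda(q)=\lambda(p\vartriangleleft\lambda(q)+q\centerdot p)$: the reversed factor $q\centerdot p$ forces each block to be read in the opposite order, whence the appearance of $P_{\bar{\pi}_j}$, while the fact that it is $-\lambda$, not $\lambda$, which is the weight-one $\mathcal{O}$-Lie operator governing \eqref{eqn:RBop} contributes one sign per letter and hence the global factor $(-1)^n$. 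As a consistency check independent of the induction, I note that a sum over set partitions of products of $\rho$-images of block contractions is the familiar combinatorial signature of an exponential (a ``moments-from-cumulants'' expansion); this matches the description of the Guin--Oudom extension as an exponentiation and can be used to confirm the formula in the first few degrees directly.
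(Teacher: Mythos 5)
Your strategy is genuinely different from the paper's, and the step you set aside as ``bookkeeping'' is exactly where it breaks. The paper does not induct on the partition formula itself: it introduces the first Eulerian idempotent $\mathrm{sol}_1$, sets $\rho^{(1)}:=\mathrm{sol}_1\circ\boldsymbol{\rho}$, and proves by induction on the Guin--Oudom recursion that $\rho^{(1)}(p_1\cdots p_n)=\rho(p_1\centerdot\cdots\centerdot p_n)$; there the derivation terms coming from $(p_1\cdots p_n)\preliediff\rho(h)$ cancel exactly against the insertion part of the identity $\rho(P)\vartriangleleft\rho(h)=\rho(P\vartriangleleft\rho(h))+\rho(P\centerdot h)$, leaving only the ``append $h$ to the $\centerdot$-product'' term. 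The sum over set partitions is then not re-derived combinatorially; it is the standard exponential (cumulant-to-moment) expansion applied to the single computed primitive component $\rho\circ\hat{m}$. So the paper's induction runs on the Eulerian-primitive part of $[\boldsymbol{\rho}]$, not on $[\boldsymbol{\rho}]$ itself.

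In your scheme, after splitting ${\rm Part}(n)$ according to whether $n$ is a singleton, the inductive step forces the identity
\begin{equation*}
-[\boldsymbol{\rho}]\big((p_1\cdots p_{n-1})\preliediff\rho(p_n)\big)
=\sum_{\pi\in{\rm Part}(n-1)}\sum_{j}\rho(P_{\pi_1})\cdots\rho(P_{\pi_j}\centerdot p_n)\cdots\rho(P_{\pi_k}).
\end{equation*}
Expanding the left-hand side with the inductive hypothesis and the fact that $\vartriangleleft$ acts by derivations of $\centerdot$ turns it into $-\sum_{\pi}\sum_j\rho(P_{\pi_1})\cdots\rho(P_{\pi_j}\vartriangleleft\rho(p_n))\cdots\rho(P_{\pi_k})$, so blockwise you need $\rho(P\vartriangleleft\rho(p_n))=-\rho(P\centerdot p_n)$, which by the identity above is equivalent to $\rho(P)\vartriangleleft\rho(p_n)=0$ --- false in general. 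Already at $n=2$ the recursion gives $[\boldsymbol{\rho}](p_1p_2)=\rho(p_1)\rho(p_2)-\rho(p_1\vartriangleleft\rho(p_2))$, which differs from the target $\rho(p_1)\rho(p_2)+\rho(p_1\centerdot p_2)$ by exactly the term $\rho(p_1)\vartriangleleft\rho(p_2)$. These leftover terms $\rho(P_{\pi_j})\vartriangleleft\rho(p_n)$ measure the discrepancy between multiplying by $\rho(p_n)$ in $\mathcal{U}(\a)$ and letting $\rho(p_n)$ act factorwise by $\vartriangleleft$; they do not ``reorganise correctly'' inside the concatenation product, and eliminating them is precisely the reason the paper projects onto primitives with $\mathrm{sol}_1$ before inducting. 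As written, your proposal therefore has a genuine gap: the deferred cancellation is not a verification left to the reader but the actual content of the proof, and the direct induction on the full formula does not close.
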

\begin{proof} Recall that $\mathrm{sol}_1:\mathcal{U}(\mathbb{C}[\mathcal{P}]_+,\llbracket-,-\rrbracket) \to \mathbb{C}[\mathcal{P}]_+$ is the projector onto $\mathbb{C}[\mathcal{P}]_+$ parallel to 
$$
\pi_{nat}(\sum_{n\geq 2} \mathcal{S}^{(n)}(\mathbb{C}[\mathcal{P}]_+)) \oplus \mathbb{C}\cdot 1.
$$ 
where $\mathcal{S}(\mathbb{C}[\mathcal{P}]_{+})$ is the coalgebra of symmetric polynomials on $\mathbb{C}[\mathcal{P}]_{+}$, $\mathcal{S}^{(n)}(\mathbb{C}[\mathcal{P}]_{+})$ is the vector space of homogeneous polynomials of degree $n\geq 0$. The coalgebra map $\pi_{\rm nat} \colon \mathcal{S}(\mathbb{C}[\mathcal{P}]_{+}) \to \mathcal{U}(\mathbb{C}[\mathcal{P}]_{+}, \llbracket -,- \rrbracket)$ is an isomorphism of coalgebras defined by 
$$
\pi_{{\rm nat}}(E_1\cdots E_n) = \sum_{\sigma\in\mathcal{S}_n} E_{\sigma^{-1}(1)} \cdots E_{\sigma^{-1}(n)}
$$
Since $\mathcal{S}(\mathbb{C}[\mathcal{P}]_{+})$ ist co-free  co-nilpotent cocommutative  co-generated by $\mathbb{C}[\mathcal{P}]_{+}$ , we get that two coalgebraic maps $\alpha,\beta \colon \mathcal{U}(\mathbb{C}[\mathcal{P}],[-,-]_{\centerdot})\to \mathcal{U}(\mathbb{C}[\mathcal{P}], \llbracket -,-\rrbracket)$ are equal if and only if ${\rm sol}_1\circ \alpha = {\rm sol}_1\circ \beta $.

Set $
\rho^{(1)} := {\rm sol}_1 \circ [\boldsymbol{\rho}].$
We define $\hat{m}\colon \mathcal{U}(\mathbb{C}\llbracket P \rrbracket, [-,-]_{\centerdot}) \rightarrow \mathbb{C}\llbracket P \rrbracket$ by taking the $\centerdot$ product of all entries of a polynomial in $\mathcal{U}(\mathbb{C}\llbracket P \rrbracket, [-,-]_{\centerdot})$.
We prove recursively that $\rho^{(1)}(E_1 \cdots E_n) = \rho (\hat{m}(E_1\cdots E_n))$. If $n=1$, the assertion is trivial.
Pick $p_1 \cdots p_{n} \in \univalgebra$ and $h \in \mathbb{C}[\mathcal{P}]$. Then
\begin{align*}
\rho^{(1)}(p_1\cdots p_n h)&=\rho^{(1)}(p_1\cdots p_n)\preliediff \rho(h) - \rho^{(1)}((p_1\cdots p_n) \preliediff \rho(h))\\
&=\rho({\hat{m}(p_1\cdots p_n)}) \preliediff \rho(h) - \rho(\hat{m}(p_1\cdots (p_i \preliediff \rho(h)) \cdots p_n)) \\
&=\rho(\hat{m}(p_1 \cdots p_n h)) + \rho(p_1\cdots (p_i \preliediff \rho(h))\cdots p_n) - \rho(\hat{m}(p_1\cdots (p_i \preliediff \rho(h)) \cdots p_n)) \\
&=\rho(\hat{m}(p_1 \cdots p_n h))
\end{align*}
The right-hand side of equation \eqref{eqn:extensionrho} is well-known to define the unique coalgebric extension of its corestriction to $\mathbb{C}[\mathcal{P}]_{+}$, since the postcomposition of the right hand side of \eqref{eqn:extensionrho} by the inverse of $\pi_{\rm nat}$ is equal to
$$
\mathcal{U}(\g)\ni p \mapsto \varepsilon(p) + \sum_{n\geq 0} m^{(n)} ({\varphi}^{\otimes n} (\overline{\Delta}_n(p)))
$$
where $\varphi = \rho\circ \hat{m}$ and $m^{(n)}\colon \mathcal{S}(\mathbb{C}[\mathcal{P}]_+)^{n}$ is the $n^{th}$ iterate of the produt on $\mathcal{S}(\mathbb{C}[\mathcal{P}]_+)$, and $\Delta_n$ is the $n^{th} $ iteration of the reduced (unshuffle) coproduct of $\mathcal{U}(\g)$, see e.g  Remark 2.13.1 in \cite{cartier2021classical}.
\end{proof}

\subsubsection{Translations in the dendriform operad}
\label{ssec:translations}

In this section, we show that in the dendriform operad Dend, which is an operad with multiplication, we can define a second matching $\mathcal{O}$ Lie operator, besides the one described in the previous section, also given by translations by the unit of Dend, but with respect to the two half-dendriform products. The dendriform operad is generated by two operators $\prec$ and $\succ$ with arity $2$ subject to the relations:
\begin{align*}
    \prec \circ (\prec \otimes {\rm I})
    &=\prec \circ (I \otimes (\prec + \succ)),\\
    \prec \circ(\succ \otimes I)
    &=\succ \circ (I \otimes \prec),\\
    \succ \circ (\succ \otimes I)
    &= ((\succ + \prec) \otimes I) \circ \succ .
\end{align*}
In the following we will use the notation:
$$
    m = \prec + \succ.
$$
The above relations imply that $m$ is a product in Dend. We will use $\centerdot$ for the induced product on $\mathbb{C}[\mathcal{P}]$. Note that, for any $p,q \in \mathcal{C}[\mathcal{P}]$:
$$
    [p,q]_{\centerdot}=p \blacktriangleleft q - q \blacktriangleleft p
$$
with $\blacktriangleleft$ the preLie product over $\mathbb{C}[\mathcal{P}]$ defined by: 
$$
    p \blacktriangleleft q = p \prec q - q \succ p.
$$
Next, we define:
\begin{equation}
\begin{array}{cccc}
	\lambda_{\prec} : & \mathbb{C}[{\rm Dend}] & \to     & \mathbb{C}[{\rm Dend}] \\
					   &    x 	                & \mapsto & I\prec x
\end{array}, \quad
\begin{array}{cccc}
	\rho_{\succ} :  &\mathbb{C}[{\rm Dend}] & \to     &\mathbb{C}[{\rm Dend}] \\
				    &    x 	                & \mapsto & x\succ I
\end{array}
\end{equation}
Recall the notations of the previous Section.

\begin{proposition}
Both $\lambda_{\prec}$ and $\lambda_{\succ}$ are $\mathcal{O}$ Lie operators. Furthermore, $(\lambda_{\prec},\rho_{\succ})$ is a matching $\mathcal{O}$ Lie operator on the Lie module $(\g,\vartriangleleft,\a)$.
\end{proposition}

\begin{proof}
Let us prove that $\lambda_{\prec}$ is a $\mathcal{O}$ Lie operator. The result for $\rho_{\succ}$ will follow from the same computations. 
\begin{align*}
    \lambda_{\prec}(x) \vartriangleleft \lambda_{\prec}(y)
    &=\lambda(x) \prec \lambda(y)\\ 
    &=((I \prec y) \prec x)+I \prec (x \vartriangleleft(I\prec y))\\
    &=I \prec (y \prec x + x \succ y) + I \prec (x \vartriangleleft (I \prec y))\\
    &=\lambda_{\prec}(x \vartriangleleft \lambda_{\prec}(y))+\lambda_{\prec}(y \prec x + x \succ y)\\
    &=\lambda(x \vartriangleleft \lambda_{\prec}(y))+\lambda_{\prec}(x\centerdot y).
\end{align*}
This yields
\begin{align*}
    \llbracket\lambda_{\prec}(x),\lambda_{\prec}(y)\rrbracket
    =\lambda_{\prec}(x\vartriangleleft\lambda_{\prec}(y))-\lambda_{\prec}(y \vartriangleleft \lambda_{\prec}(x))+ \lambda ([x,y]_{\centerdot})
\end{align*}
Next, we show that $\lambda_{\prec}, \rho_{\succ}$ is a matching $\mathcal{O}$ Lie operator.
\begin{align*}
    \lambda_{\prec}(x) \vartriangleleft \rho_{\succ}(y)
    &=\rho_{\succ}(y)\prec x + I \prec (x \vartriangleleft \rho_{\succ}(y)) \\
    &=(y \succ I) \prec x + I \prec (x \vartriangleleft \rho_{\succ}(y)) \\
    &= y \succ (I \prec x) + \lambda_{\prec}(x \vartriangleleft\rho_{\succ}(y)) \\
\end{align*}
and, by the same computations: 
\begin{align*}
    \rho_{\succ}(y) \vartriangleleft \lambda_{\prec}(x)
    = y \succ (I \prec x) + \rho_{\succ}(y \vartriangleleft (I\prec x)) 
\end{align*}
    The above two equations imply $\llbracket \lambda_{\prec}(x),\rho_{\succ}(y) \rrbracket=\lambda_{\prec}(x \vartriangleleft \rho_{\succ}(y))-\rho_{\succ}(y \vartriangleleft \lambda_{\prec}(y))$.
\end{proof}

\subsubsection{Chen Fliess series and multiplicative feedback}
\label{sec:chenfliess}
We refer the reader to \cite{guggilam2022formal} for details.
In the following, $\mathbb{R}^q\langle\langle X \rangle\rangle$ is the linear space of $q$-tuples of formal series on words with entries in an alphabet $X$. We use the symbol $\shuffle$ for the shuffle product between series in $\mathbb{R}^q\langle\langle X\rangle\rangle$.
Pick $k\geq 1$ and $\ell \geq 1$ two integers. Let $X$ and $X^{\prime}$ be two alphabets with $|X^{\prime}|=\ell+1$. Let  $c=(c_1,\ldots,c_k) \in \mathbb{R}^{k}\langle\langle X^{\prime} \rangle\rangle$ and $d=(d_1,\ldots,d_\ell) \in \mathbb{R}^\ell\langle\langle X\rangle\rangle$ be formal series. 
Define the composition product $c \circ d$  as the formal series in $\mathbb{R}^k\langle\langle X \rangle\rangle$ defined by:
$$
c \circ d = \sum_{\eta \in (X^{\prime})^{\star}} (c,\eta) \psi_d(\eta)(\emptyset)
$$
where $\psi_d(\eta) \in {\rm End}(\mathbb{R}^k\langle\langle X \rangle\rangle)$ is defined by:
\begin{align*}
&\psi_d(x_i^{\prime} \eta) = \psi_d(x_i^{\prime}) \circ \psi_d(\eta), \\
&\psi_d(x^{\prime}_i)(e)=x_0(d_i\shuffle e)
\end{align*}
The multiplicative fixed composition product $e \curvearrowleft F$ between two formal series $e \in \mathbb{R}^{\ell}\langle\langle X \rangle\rangle$ and $F \in \mathbb{R}^m \langle\langle X \rangle\rangle$ on the same alphabet $X=\{x_0,\ldots,x_m\}$ is defined as 
$$
e \curvearrowleft F = \sum_{\eta\in X^{\star}} (e,\eta) \eta \curvearrowleft F = \sum_{\eta \in X^{\star}}(e,\eta)\bar{\varphi}_d(\eta)(\emptyset)
$$
where $\varphi_d\coloni \mathbb{R}\langle\langle X \rangle\rangle \rightarrow {\rm End}(\mathbb{R}\langle\langle X \rangle\rangle)$ is defined by:
\begin{align*}
&\bar{\varphi}_d (x_0)(t)=x_0t, \\
&\bar{\varphi}_d (x_i)(t)=x_i(e_i\shuffle t).
\end{align*}
From Lemma 5.2 in \cite{guggilam2022formal}, for any $c \in \mathbb{R}^k\langle\langle X^{\prime} \rangle\rangle$, $e \in \mathbb{R}^{\ell}\langle\langle X \rangle\rangle$ and $F \in \mathbb{R}^m\langle \langle X \rangle \rangle$:
\begin{equation}
\label{eqn:dist}
c \circ (e \curvearrowleft F)=(c\circ d) \curvearrowleft F.
\end{equation}
With the associative group $\star$ product between two series $F,G \in \mathbb{R}^m\langle\langle X \rangle\rangle$ defined by:
$$
F \star G = (F \curvearrowleft G) \shuffle G
$$
one obtains that $((\mathbb{R}^k\langle\langle X \rangle\rangle,\shuffle),\curvearrowleft, (\mathbb{R}^m\langle\langle X\rangle\rangle,\star) )$ is a right group module. To any formal series $\mathbb{R}^m \langle\langle X \rangle\rangle$, one considers the left translation:
$$
\lambda_c \colon \mathbb{R}^k\langle \langle X \rangle \rangle \to \mathbb{R}^m\langle\langle X \rangle\rangle, d \mapsto c \circ d 
$$
Then $\{\lambda_c,~ c \in \mathbb{R}^m\langle\langle X \rangle\rangle\}$ is a matching group $\mathcal{O}$-operator . As it can be easily checked from \eqref{eqn:dist}:
\begin{align*}
c \circ (d \curvearrowleft (c^{\prime} \circ d^{\prime})^{-1^{\star}})\star (c^{\prime} \circ d^{\prime}) &= ((c \circ (d \curvearrowleft (c^{\prime} \circ d^{\prime})^{-1^{\star}})) \curvearrowleft (c^{\prime}\circ d^{\prime}))\shuffle (c^{\prime}\circ d^{\prime})\\
&=(c \circ (d \curvearrowleft ((c^{\prime} \circ d^{\prime})^{-1^{\star}}\star(c^{\prime}\circ d^{\prime}))))\shuffle (c^{\prime}\circ d^{\prime})\\
&=(c \circ d) \shuffle (c^{\prime} \circ d)\\
&=c^{\prime} \circ (d^{\prime} \curvearrowleft (c \circ d)^{-1^{\star}}) \star (c\circ d) 
\end{align*}

\subsection{Filtered setting}
\label{sec:filtered}
All Hopf algebras considered in this section are cocommutative.
In this section, we propose definitions for $\mathcal{O}$-operators of the various types we have considered so far where Lie algebras, Hopf algebras and groups come equipped with a filtration (by Lie algebras, Hopf algebras and groups, respectively). We refer to the first part of \cite{goncharov2024formal} for the basic definitions of (complete) filtered Lie algebra, (complete) filtered Hopf algebra and (complete) filtered group in the context of Rota-Baxter structures. This section is a mere generalization of \cite{goncharov2024formal}

Let us introduce the filtered counterpart of the objects we have defined.

Given a right Lie module $(\mathfrak{g},\vartriangleleft,\mathfrak{a})$, we call \emph{standard filtration} of a right Lie module $(\mathfrak{g},\vartriangleleft,\mathfrak{a})$ the filtrations by Lie algebras recursively defined by:
\begin{align*}
&\mathcal{F}_1 \mathfrak{g} = \mathfrak{g} \\
& \mathcal{F}_{n+1}\mathfrak{g} = {\rm ad}_{\mathfrak{g}} \mathcal{F}_{n}\mathfrak{g} + \mathcal{F}_n\mathfrak{g} \vartriangleleft \mathfrak{a},\quad n\geq 1
\end{align*}
and we consider on the Lie algebra $\mathfrak{a}$ its derived filtration:
\begin{align*}
&    \mathcal{F}_{1} = \mathfrak{a} \\
&    \mathcal{F}_{n+1} \mathfrak{a} = {\rm ad}_{\mathfrak{a}} \mathcal{F}_{n}\mathfrak{a}, \quad n\geq 1.
\end{align*}
Given a right filtered Lie module $(\mathfrak{g}, \mathcal{F}_{\bullet})$, we denote by $\hat{\mathfrak{g}}$ its completion:
$$
\hat{\mathfrak{g}} = \underset{\leftarrow}{\lim} g/\mathcal{F}_{n+1}\mathfrak{\g} 
$$
We let $({\mathcal{U}(\mathfrak{g}), \mathcal{F}_{\bullet}})$ be the filtered universal envelope of $(\mathfrak{g}, \mathcal{F}_{\bullet})$ \cite{goncharov2024formal, quillen1969rational, fresse2017homotopy} and denote by $\widehat{\mathcal{U}(\mathfrak{g})}$ its completion.
Recall that filtration on $\mathcal{U}{\mathfrak{g}}$ is defined by, for any $n\geq 1$
\begin{align}
\mathcal{F}_n(\mathcal{U}(\g)) = \{E_1\cdots E_n \colon E_i \in \mathcal{F}_{n_i}\mathfrak{g},~ \sum_{i}n_i \geq n\}.
\end{align}
We denote by $\mathbb{G}$ the set of group like elements of $\widehat{\mathcal{U}(\mathfrak{g})}$:
\begin{align*}
{\rm Grp}(\mathcal{U}(\mathfrak{g})) = \{ x \in \widehat{\mathcal{U}(\mathfrak{g})}\colon \Delta(x) = x \widehat{\otimes} x\}.
\end{align*}
Given a map between filtered Lie algebras, filtered Hopf algebras or filtered group respecting the filtrations, we add an $\widehat{\quad}$ to its symbol to denote its extension/coextension to the completions. For exemple, if $\bT \colon \mathcal{U}(\mathfrak{g}) \to \mathcal{U}(\mathfrak{a})$ is filtered Hopf $\mathcal{O}$-operator, its completion is denoted $\widehat{\bT}\colon \widehat{\mathcal{U}(\mathfrak{g})}\to\widehat{\mathcal{U}(\mathfrak{a})}$.

Given $x=(0,x_1,\ldots,x_m,\ldots) \in \mathcal{F}_1\widehat{\mathcal{U}(\mathfrak{g})}(={\rm ker}(\varepsilon))$, we define the exponential map $ \exp\colon \mathcal{F}_1\widehat{\mathcal{U}(\mathfrak{g})}\to \widehat{\mathcal{U}(\mathfrak{g})}$ by th equation
\begin{equation}
\label{expo}
\exp(x) = (1,1+x_1, 1+x_2 + \frac{x_2^2}{2}, \ldots, \sum_{i=1}^{n} \frac{x_n^i}{i!},\ldots) : = 1 + x + \frac{1}{2}x^2 + \cdots + \frac{1}{n!}x^n + \cdots 
\end{equation}
We recall the following result. 
\begin{proposition}[see \cite{quillen1969rational}]
\label{prop:isomorphismexpo}
The exponential map is a bijection from $\hat{\mathfrak{g}}$ to $\mathbb{G}$.
\end{proposition}

\begin{definition}[]
$\bullet$ A \emph{right filtered Lie module} is a triple $(\mathfrak{g},\vartriangleleft, \mathfrak{a})$ where $\mathfrak{g},\mathfrak{a}$ are two filtered Lie algebras and $\vartriangleleft$ is compatible with the filtration, which means 
$$
\mathcal{F}_{m}(\mathfrak{g})\vartriangleleft \mathcal{F}_{n}(\mathfrak{a}) \subset \mathcal{F}_{m+n} \mathfrak{g}.
$$
$\bullet$ A \emph{right filtered group module} is a triple $(G,\cdot,\Gamma)$ where $G,\Gamma$ are two filtered group  and $\cdot$ is compatible with the filtrations, which means 
$$
\mathcal{F}_{m}(G)\cdot \mathcal{F}_{n}(\Gamma) \subset \mathcal{F}_{m+n} G.
$$
$\bullet$ A \emph{right filtered Hopf module} is a triple $(H,\preliediff,K)$ where $H,K$ are two filtered Hopf algebras and $\preliediff$ is compatible with the filtrations, which means 
$$
\mathcal{F}_{m}(H)\preliediff \mathcal{F}_{n}(K) \subset \mathcal{F}_{m+n} H.
$$
\end{definition}
On the completion $\hat{\mathfrak{g}}$ of a filtered Lie algebra $(\mathfrak{g}, \mathcal{F}_{\bullet})$, we denote by $\cdot_{\rm BCH}$ the BCH (Baker-Campbell-Hausdorff) group law on $\mathfrak{g}$:
$$
x\cdot_{\rm BCH} y = \log(\exp(x)\exp(y)),\quad x,y \in \hat{\mathfrak{g}}.
$$
\begin{proposition}[{Formal integration of a filtered right Lie module}]
\label{prop:formaintegrationliemodule}
Let $(\g,\vartriangleleft, \a)$ be a right filtered Lie module and let $(\mathcal{U}(\g), \preliediff, \mathcal{U}(\a))$ be the (ordinary) envelope of $(\g,\vartriangleleft, \a)$. Then, if considering the filtrations on $\mathcal{U}(\g)$ and $\mathcal{U}(\a)$ induced by the filtrations of $\g$ and $\a$ respectively, $(\mathcal{U}(\g), \preliediff, \mathcal{U}(\a))$ becomes a filtered right Hopf module. 
Moreover, $\preliediff$ yields in a canonical way a complete right Hopf module $(\widehat{\mathcal{U}(\g)}, \widehat{\preliediff},\widehat{\mathcal{U}(\a))}$, meaning that 
Define the action $\cdot \colon \hat{\g} \times \hat{\a} \to \g$ by 
$$
{x} \cdot {y}  := x \preliediff \exp(y)
$$
Then $( (\g, \cdot_{\rm BCH}), \cdot, (\a, \cdot_{\rm BCH}) )$ is a right filtered group module.
\end{proposition}
\begin{proof} The first part of the proposition is obvious. We will continue to use the symbol $\preliediff$ for completion $\widehat{\preliediff}$. Let us check the second part. Let $x_1,x_2 \in \g$ and $y\in \a$. We prove first that $\cdot$ distributes over the multiplication of two elements in $(\g,\cdot_{\rm BCH}) $. Since $ \exp\colon \hat{g} \to \mathbb{G}$ is a bijection, we check that 
$$
\exp(x_1x_2 \preliediff y) = \exp((x_1 \preliediff \exp(y))\cdot_{\rm BCH}\exp((x_2 \preliediff \exp(y))))
$$
The result follows from straightforward computations based on the relation $\exp(x\preliediff g) = \exp(x)\preliediff g$, $x\in \g, g\in {\rm Grp}({\mathcal{U}(\mathfrak{a})})$. 
\begin{align*}
\exp(x_1x_2 \preliediff y) &= \exp(x_1\cdot_{\rm BCH}x_2) \preliediff \exp(y) \\
&= (\exp(x_1) \exp(x_2))\preliediff \exp(y) \\
&= \exp(x_1\cdot_{\rm BCH}x_2) \preliediff \exp(y) \\ 
&= (\exp(x_1)\preliediff \exp(y))(\exp(x_2)\exp(y)) \\
&=(\exp(x_1)\preliediff \exp(y) )( \exp(x_2)\preliediff \exp(y) ) \\
&=(\exp(x_1\preliediff \exp(y)) )( \exp(x_2\preliediff \exp(y)) ) \\
&=\exp( (x_1\preliediff \exp(y))\cdot_{\rm BCH}  (x_2\preliediff \exp(y))  ). 
\end{align*}
We check next that $(x \preliediff \exp(y_{1}))\preliediff \exp(y_{2})=(x \preliediff \exp(y_{1}\cdot_{\rm BCH} y_{2}))$ for any $x\in \mathfrak{g}, y_1,y_2 \in \mathfrak{a}$. This relations follows from the fact that $\preliediff$ is a action and the definition of the $\cdot_{\rm BCH}$ group law.
\end{proof}
\begin{definition}[{Filtered $\mathcal{O}$-operators}]
$\bullet$ A filtered Lie $\mathcal{O}$-operator on a filtered Lie module $(\mathfrak{g}, \vartriangleleft, \mathfrak{a})$ is a $\mathcal{O}$-operator $\bT\colon\mathfrak{g}\to\mathfrak{a}$ compatible with the filtrations of $\mathfrak{g}$ and $\mathfrak{a}$.

$\bullet$ A filtered Hopf $\mathcal{O}$-operator on a filtered Hopf module $(H, \preliediff, K)$ is a $\mathcal{O}$-operator $\bT\colon\mathfrak{g}\to\mathfrak{a}$ compatible with the filtrations of $H$ and $K$.

$\bullet$ A filtered group $\mathcal{O}$-operator on a filtered group module $(G, \cdot, \Gamma)$ is a $\mathcal{O}$-operator $\bT\colon G \to K$ compatible with the filtrations of $G$ and $\Gamma$.
\end{definition}

\begin{definition}[Filtered matching $\mathcal{O}$-operators]
$\bullet$ A filtered matching Lie $\mathcal{O}$-operator on a filtered Lie module $(\mathfrak{g}, \vartriangleleft, \mathfrak{a}, \mathcal{F}_n)$ is a matching $\mathcal{O}$-operator $\{{\bm t}_w,\,w\in\Omega
\}$ on $(\mathfrak{g}, \vartriangleleft, \mathfrak{a})$ compatible with the filtrations of $\mathfrak{g}$ and $\mathfrak{a}$.

$\bullet$ A filtered matching Hopf $\mathcal{O}$-operator on a filtered Hopf module $(H, \preliediff, K, \mathcal{F}_{\bullet})$ is a matching $\mathcal{O}$-operator $\{\bT_w,\,w\in \Omega\}$ on $(H,\preliediff, K)$ compatible with the filtrations on $H$ and $K$.

$\bullet$ A filtered matching group $\mathcal{O}$-operator on a filtered group module $(G, \cdot, \Gamma, \mathcal{F}_{\bullet})$ is a matching $\mathcal{O}$-operator $\{\bT_w,\,w \in \Omega\}$ compatible with the filtrations of $G$ and $\Gamma$.
\end{definition}

The same argument as in Proposition 4.13 \cite{goncharov2024formal} implies that $\widehat{\bT}$ is a Hopf $\mathcal{O}$-operator in the completion if $\bT\colon H \to K$ is a filtered Hopf $\mathcal{O}$-operator. 
\begin{proposition}[see Proposition 4.13 in \cite{goncharov2024formal}]
Let $\bS$ and $\bT$ be a matching filtered Hopf $\mathcal{O}$-operator over a right filtered Hopf module $(H,\preliediff, K)$. Then their completion $\widehat{\bS}$ and $\widehat{\bT}$ yield a complete matching Hopf $\mathcal{O}$-operator $\{\widehat{\bS},\widehat{\bT}\}$.
\end{proposition}
\begin{proposition} Let $(\mathfrak{g}, \triangleleft, \mathfrak{a})$ be a right filtered Lie module. Consider ${\bf t}\colon \mathfrak{g}\to\mathfrak{a}$ a filtered Lie $\mathcal{O}$-operator. Then, the extension $[\bf t]$ defined by the equation \eqref{eqn:RBop} is a filtered Hopf $\mathcal{O}$-operator.
\end{proposition}

The following theorem is a generalisation of Theorem 4.12 of \cite{goncharov2024formal} about formal integration of Rota-Baxter operators on filtered Lie algebras, but in the case of matching Lie $\mathcal{O}$-operators.
\begin{theorem}{(Formal Integration of matching filtered Lie $\mathcal{O}$-operators)}
\label{thm:integrationmatchingolie}
Let $(\mathfrak{g}, \preliediff, \mathfrak{a})$ be a right filtered Lie module and consider a matching filtered Lie $\mathcal{O}$-operator ${\bm s}, {\bm t} \colon \mathfrak{g}\to\mathfrak{a}$. Then  
\begin{equation}
{\mathcal{T}}(x) = \log(\widehat{\bT}(\exp(x))), \quad 
{\mathcal{S}}(x) = \log(\widehat{\bS}(\exp(x))),\quad x \in \hat{\mathfrak{g}}
\end{equation}
is a matching group $\mathcal{O}$-operator on the formal integration of the right filtered Lie module $(\mathfrak{g}, \vartriangleleft, \mathfrak{a})$.
\end{theorem}
\begin{proof}
We have to check first that the two operators $\mathcal{T}$ and $\mathcal{S}$, separately, are group $\mathcal{O}$-operators. We do not introduce a symbol for the BCH group laws on $\hat{\mathfrak{g}}$ and $\mathfrak{a}$ to compute. We recall that a filtered right Lie module $(\mathfrak{g}, \vartriangleleft, \mathfrak{a})$ yields a filtered right group module $(\mathfrak{g}, \cdot, \mathfrak{a})$ if $\mathfrak{g}$ and $\mathfrak{a}$ are equipped with their BCH group laws. Let $x,y \in \hat{\mathfrak{g}}$,
\begin{align*}
\mathcal{T}(x)\mathcal{T}(y) & =  \log(\widehat{\bT}(\exp(x)))\log(\widehat{\bT}(\exp(y))) \\
					    &=   \log(\widehat{\bT}(\exp(x))\widehat{\bT}(\exp(y))) \\
					    & =  \log(\widehat{\bT}(\exp(x)\star_{\bT} \exp(y))) \\
					    &= \log(\widehat{\bT}(\exp(x \vartriangleleft_{\bm t} \exp(y))\exp(y))) \\
					    &= \log(\widehat{\bT}(\exp(\big(x \vartriangleleft_{\bm t}\exp(y)\big)y))) \\
					    & = \mathcal{T}((x \cdot_{\mathcal{T}} y)y)
\end{align*}
\begin{align*}
\mathcal{T}(x \cdot \mathcal{S}(y^{-1})))\mathcal{S}(y)&=\log(\widehat{\bT}(\exp(x \preliediff \widehat{\bS}(\exp(-y))))) \\
							    &=\log(\widehat{\bT}(\exp(x) \preliediff \widehat{\bS}(\exp(-y)))) \\
							    &= \log(\widehat{\bS}(\exp(y) \preliediff \widehat{\bT}(\exp(-x)))) \\
							    &= \log(\widehat{\bS}(\exp(y \preliediff \widehat{\bT}(\exp(-x))))) \\
							    &= \mathcal{S}(x \cdot \mathcal{T}(y^{-1})))\mathcal{T}(y)
\end{align*}
This concludes the proof.
\end{proof}

\section{Semenov-Tian-Shanskii factorisation and matching $\mathcal{O}$-operators}
\label{sec:SemenovTianShanskii}
This section develops the deformation theoretic perspective on the STS factorisation.
Hereafter, the subset of all $\mathcal{O}$ -Hopf operators on a Hopf module $(H,\preliediff,K)$ will be denoted $G^{\mathcal{O}}_{\Delta}$ (or $G^{\mathcal{O}}_{\Delta}(H,K)$ to eliminate ambiguities). 
\subsection{Guin-Oudom group}
\label{sec:gogroup}

The objective of this section is to give a Lie algebraic interpretation to the recursion \eqref{eqn:RBop} by defining the structure of a post-Lie algebra on $\varepsilon$-co-cocycles from $H$ to $K$. This adds a novel perspective on the Guin--Oudom construction and will be useful in the next section regarding computing the quantization of linear combinations of matching $\mathcal{O}$-operators (a notion introduced subsequently).
The association of a Lie algebra and corresponding group to each right Hopf module is functorial, but we need a slight variation of the notion of morphism between two right Hopf modules. In this section, given two right Hopf modules $(H,\preliediff,K)$ and $(H^{\prime}, \preliediff^{\prime},K^{\prime})$, a morphism with source $(H,\preliediff, K)$ and target $(H^{\prime}, \preliediff^{\prime}, K^{\prime})$ will be a pair of $(\Phi^{\prime}, \Theta)$, $\Phi^{\prime}:H^{\prime}\to H$ and $\Theta:K\to K^{\prime}$ such that for $h^{\prime} \in H^{\prime}$ and $k \in K$
$$
    \Phi^{\prime}(h^{\prime} \preliediff^{\prime} \Theta(k)) 
    = \Phi^{\prime}(h^{\prime}) \preliediff k.
$$
 For the entire section, we fix a right Hopf module $(H,\preliediff,K)$ (recall that both $H$ and $K$ are supposed to be graded connected Hopf algebras). If $H$ is cocommutative, we denote by $G_{\Delta}$ the group of coalgebra morphisms from $H$ to $K$.
We denote by $\delta$ the vector space of \emph{$\varepsilon$-co-cocycles} from $H$ to $K$:
\begin{equation*}
    \delta = \{ \alpha : H \to K, ~\Delta_{K} \circ \alpha = (\eta_K \circ \varepsilon_{H} \otimes \alpha + \alpha \otimes \eta_K\circ\varepsilon_{H}) \circ \Delta_{H },\quad \varepsilon_K \circ \beta = 0\}.
\end{equation*}
It is well known that the space $\delta$ is a Lie algebra for the convolution Lie bracket (see \cite{cartier2021classical}, part I, for various fundamental constructions about Hopf algebras and bialgebras, specifically Chapter 2):
\begin{equation}
    [\alpha, \beta]_{*} = \alpha * \beta - \beta * \alpha, \qquad \alpha,\beta \in \delta.
\end{equation}
Furthermore, under the assumption that $H$ and $K$ are graded co-nilpotent, the exponential convolution $\exp_{*}$ is a bijection from $\delta$ to the convolution group $G_{\Delta}$ of all coalgebra morphisms from $H$ to $K$.

\begin{definitionproposition}
\label{definitionproposition:post-Lie} Let $\alpha, \beta \in \delta$ be two $\varepsilon$-co-cocyles and define for any $h\in H$:
\begin{equation}
    (\alpha \preliediff^{*} \beta)(h) 
    := -\alpha ((\mathrm{id}\preliediff \beta)(h)) 
    = -\alpha(h_{(1)} \preliediff \beta(h_{(2)})).
\end{equation}
Then $\alpha \preliediff^{*} \beta$ is an $\varepsilon$-co-cocycle and $(\delta, \preliediff^{*}, [-,-]_*)$ is a post-Lie algebra.
\end{definitionproposition}
\begin{remark}
We add a minus to the definition of $\preliediff*$ to obtain a post-Lie product with respect to the bracket $[~,~]_{\star}$; otherwise we would have obtained a post-Lie operation with respect to $-[~,~]_{\star}$.
\end{remark}
\begin{proof} Let $\alpha,\beta,\gamma \in \delta$. First, one has to prove that $\alpha \preliediff^{*} \beta$ is an $\varepsilon$-co-cocycle. We pick $h \in H$ (discarding the $\eta_H$ from the computations to lighten them up), 
\begin{align*}
    \Delta ((\alpha \preliediff^{*} \beta) (h))&= -\Delta(\alpha(h_{(1)} \preliediff \beta(h_{(2)}))) \\
    &= -\alpha( (h_{(1)} \preliediff \beta(h_{(2)}))_{(1)}) \otimes \varepsilon_{H}((h_{(1)} \preliediff \beta(h_{(2)}))_{(1)}) \\ 
    &\hspace{3cm}+  \varepsilon_{H}((h_{(1)} \preliediff \beta(h_{(2)}))_{(1)}) \otimes -\alpha((h_{(1)} \preliediff \beta(h_{(2)}))_{(2)})
\end{align*}
where the last inequality follows from the fact that $\alpha$ is a cococycle.
Since $\varepsilon_{H}(h \preliediff k) = \varepsilon_{H}(h)\varepsilon_{K}(k)$, we get for the first term in the last equality of the previous equation:
\begin{align*}
\alpha( (h_{(1)} \preliediff \beta(h_{(2)}))_{(1)}) \otimes \varepsilon_{H}((h_{(1)} \preliediff \beta(h_{(2)}))_{(2)})
&=\alpha( (h_{(1)} \preliediff \beta(h_{(3)})_{(1)})) \otimes \varepsilon_{H}((h_{(2)} \preliediff \beta(h_{(3)})_{(2)})),
\end{align*}
from the fact that $\beta$ is a $\varepsilon$ co-cocycle, the previous expression is equal to
$$
=\alpha( (h_{(1)} \preliediff \beta(h_{(3)}))) \otimes \varepsilon_{H}((h_{(2)} \preliediff \varepsilon_H(h_{(3)})) + \alpha( (h_{(1)} \preliediff \varepsilon(h_{(3)}))) \otimes \varepsilon_{H}((h_{(2)} \preliediff \beta(h_{(3)}))
$$
Since $ \varepsilon(h \preliediff k) = \varepsilon(h)\varepsilon(k)$ and $\varepsilon_K \circ \beta = 0$, we get again that the last expression is equal to
$$
= \alpha( (h_{(1)} \preliediff \beta(h_{(2)}))) \otimes \varepsilon(h_{(3)}).
$$
The computations of the second term is done in the same way. We infer that $ \alpha \preliediff^{*} \beta $ is a $\varepsilon$ co-cocycle.
We now prove the post-Lie axioms. First, one sees that $\preliediff^{*} \beta $ is a derivation with respect to the Lie bracket  $[~,~]_{\star}$. Second,
\begin{align*}
    (\alpha \preliediff^{*} \beta) \preliediff^{*} \gamma 
    &= -(\alpha \preliediff^{*} \beta)(\mathrm{id}\preliediff \gamma) = \alpha((\mathrm{id} \preliediff \gamma) \preliediff \beta) + \alpha (\mathrm{id}\preliediff \beta(\mathrm{id}\preliediff \gamma)) \\
    &=\alpha(\mathrm{id} \preliediff \gamma * \beta) + \alpha \preliediff^{*}(\beta\preliediff^{*} \gamma) \\
    &=-\alpha\preliediff^{*} (\gamma * \beta) + \alpha \preliediff^{*} (\beta \preliediff^{*} \gamma)
    \end{align*}
   which gives 
   $$
 (\alpha \preliediff^{*} \beta) \preliediff^{*} \gamma - \alpha \preliediff^{*}(\beta \preliediff^{*} \gamma) = -\alpha \preliediff^{*}(\gamma * \beta).
   $$
Anti-symmetrizing in $\beta$ and $\gamma$ yields
$$
    (\alpha \preliediff^{*} \beta) \preliediff^{*} \gamma - \alpha \preliediff^{*}(\beta \preliediff^{*} \gamma) -
 \big((\alpha \preliediff^{*} \gamma) \preliediff^{*} \beta - \alpha \preliediff^{*}(\gamma \preliediff^{*} \beta)\big) = \alpha \preliediff^{*}[\beta,\gamma]_{*}.
$$
\end{proof}
The following proposition is a simple consequence of the definitions above.
\begin{proposition} 
Let $(\Phi^{\prime},\Theta)\coloni (H,\preliediff,K) \to (H^{\prime}, \preliediff^{\prime},K^{\prime}) $ be a morphism between two right Hopf modules $(H,\preliediff,K)$ and $(H^{\prime}, \preliediff^{\prime},K^{\prime})$ and denote by $\delta$ and $\delta^{\prime}$ their respective post-Lie algebras introduced in Definition \ref{definitionproposition:post-Lie}. Then
$$    
    \Theta^*\Phi^{\prime}_*:
    \delta \to \delta^{\prime},~\alpha\mapsto \Theta\circ\alpha\circ\Phi^{\prime}
$$
is a morphism between post-Lie modules:
$$
    \Theta^*\Phi^{\prime}_*(\alpha\preliediff^{*} \beta) = \Theta^*\Phi^{\prime}_*(\alpha)\preliediff^{\prime,*}\Theta^*\Phi^{\prime}_*(\alpha)(\beta).
$$
\end{proposition}

For the rest of the paper, we set for two $\varepsilon$-co-cocycles $\alpha,\beta \in \delta$:
\begin{equation}
    \label{eqn:post-Liebrck}
    [\alpha,\beta]_{\#} = [\alpha,\beta]_{*} + \alpha \preliediff^{*} \beta - \beta \preliediff^{*} \alpha
\end{equation}
Our objective is now to define an algebra structure on linear homomorphisms from $H$ to $K$ whose commutator Lie algebra contains $(\delta,[\alpha,\beta]_{\#})$. 

\begin{definitionproposition}[$\#$-product]
\label{def:Hopfsmash}
Assume that $H$ is co-commutative. We define $\#$ to be the bilinear associative product defined over linear maps from $H$ to $K$
\begin{equation}
    (A \# B)(f)
    = A \Big(f_{(1)} \preliediff \antipode_{K}\big(B(f_{(2)})_{(1)}\big)\Big) B(f_{(2)})_{(2)},
\end{equation}
where $A,B\coloni H \to K$ and $f \in H$. The unit of the product $\#$ is the convolutional unit $\varepsilon_H \circ \eta_K$.
\end{definitionproposition}

\begin{proof} 
We prove that $\#$ is an associative product. This follows from the following computation. Pick $A,B,C$ linear maps from $H$ to $K$ and $f \in H$, then
\begin{align*}
    ((A\# B)\# C) (f) 
    &= (A\# B)(f_{(1)} \preliediff \antipode_{K}(C(f_{(2)})_{(1)})) C(f_{(2)})_{(2)} \\
    &=A\Big[[ f_{(1)} \preliediff \antipode_{K}(C(f_{(2)})_{(1)})]_{(1)} \preliediff \antipode_{K}\big(B\big([f_{(1)} \preliediff \antipode_{K}(C(f_{(2)})_{(1)})]_{(2)}\big)_{(1)}\big)\Big]\\
    &\hspace{1cm}  B\big([f_{(1)} \preliediff \antipode_{K}(C(f_{(2)})_{(1)})]_{(2)}\big)_{(2)} C(f_{(2)})_{(2)} \\
    &=A\Big[[ f_{(1)} \preliediff \antipode_{K}(C(f_{(3)})_{(1)(1)})] \preliediff \antipode_{K}\big(B\big([f_{(2)} \preliediff \antipode_{K}(C(f_{(3)})_{(1)(2)})]\big)_{(1)}\big)\Big]\\
    &\hspace{1cm}  B\big([f_{(2)} \preliediff \antipode_{K}(C(f_{(3)})_{(1)(2)})]\big)_{(2)}   C(f_{(3)})_{(2)} \\
    &=A\Big[[ f_{(1)} \preliediff \big[\antipode_{K}(C(f_{(3)})_{(1)(1)})  \antipode_{K}\big(B\big(f_{(2)} \preliediff \antipode_{K}(C(f_{(3)})_{(1)(2)})\big)_{(1)}\big)\big]\Big]\\
    &\hspace{1cm}  B\big([f_{(2)} \preliediff \antipode_{K}(C(f_{(3)})_{(1)(2)})]\big)_{(2)}  C(f_{(3)})_{(2)}\\
    &=A\Big[[ f_{(1)} \preliediff \big[\antipode_{K}\big(B\big(f_{(2)} \preliediff \antipode_{K}(C(f_{(3)})_{(1)(2)})\big)_{(1)}  C(f_{(3)})_{(1)(1)}\big)\big]\Big]\\
    &\hspace{1cm} B\big([f_{(2)} \preliediff \antipode_{K}(C(f_{(3)})_{(1)(2)})]\big)_{(2)} C(f_{(3)})_{(2)} \\
    &=A\Big[[ f_{(1)} \preliediff \antipode_{K}\big(B\big([f_{(2)} \preliediff \antipode_{K}(C(f_{(3)})_{(1)})]\big)_{(1)}  C(f_{(3)})_{(2)}\big)\big)\Big]\\
    &\hspace{1cm}  B\big([f_{(2)} \preliediff \antipode_{K}(C(f_{(3)})_{(1)})]\big)_{(2)}   C(f_{(3)})_{(3)} \\
    &= A \# (B \# C)(f),
\end{align*}
where in the ante-penultimate equality we have used the co-commutativity of $H$.
\end{proof}
\begin{remark} When $H$ and $K$ are cocommutative filtred Hopf algebras, the product $\#$ yields an associative product on the set of linear maps from $H$ to $K$ preserving the filtrations, in fact since 
$$
A \# B = m_{K}((A \otimes B) \circ ({\rm id} \preliediff B \antipode_K) ) \circ \Delta 
$$
where $m_{K}\colon K\otimes K \to K$ is the product on $K$, $A\#B$ will preserve filtrations if the operators $A,B$ preserve the filtrations. In addition, $\widehat{A \# B}=\widehat{A} \# \widehat{B}$.
\end{remark}
\begin{example}
We continue the example \ref{sec:chenfliess}. With the notations we have introduced, the computations we did in \ref{sec:chenfliess} above also prove that :
$
\lambda_c \# \lambda_{c^{\prime}} = \lambda_{c \shuffle c^{\prime}}.
$
We obtain the Theorem 6.1 in \cite{guggilam2022formal} as an immediate corollary of the computations above, with the notations of  \cite{guggilam2022formal}: $(c$ {\verb+@+}$ d) $ {\verb+@+} $d^{\prime} = e_{\lambda_{d^{\prime}}}((e_{\lambda_{d}})(c))=e_{\lambda_d \# \lambda_{d^{\prime}}}(c) = e_{\lambda_{d\shuffle d^{\prime}}}(c) = c$ {\verb+@+}$ (d \shuffle d^{\prime})$.
\end{example}
We should denote by $\mathbb{C}\llbracket \mathcal{P}\rrbracket$ the vector space of formal series on operators in $\mathcal{P}$. The above extends verbatim to $\mathbb{C}\llbracket \mathcal{P}\rrbracket$.
The following proposition is obtained by writing any group-like element of $\mathbb{C}\llbracket \mathcal{P}\rrbracket$ as the exponential of a primitive element and is a Corollary of Proposition \ref{prop:translationidentity}
 \begin{corollaire}
 \label{cor:formulae}
     Whenever $P$ is a group like elements in $\mathbb{C}\llbracket \mathcal{P}\rrbracket$,
     \begin{align}
     [-\bm{\lambda}](P)= I \centerdot P^{-1},\quad [\bm{\rho}](P)= P \centerdot I,\quad  [\rho-\lambda] = [\rho] \# [\lambda]^{-1^{\#}} = P \centerdot I \centerdot P^{-1}
     \end{align}
 \end{corollaire}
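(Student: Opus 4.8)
The plan is to exploit that $[\bm{\rho}]$ and $[\bm{\lambda}]$ are coalgebra morphisms, so that their values on a group-like element are again group-like and hence recovered from a single primitive — their logarithm — which the preceding proposition computes in closed form. \textit{Reduction to a primitive.} Since $\mathbb{C}\llbracket\mathcal{P}\rrbracket$ is a complete filtered cocommutative Hopf algebra, a group-like $P$ is the exponential $P=\exp(p)$ of a primitive $p\in\mathbb{C}[\mathcal{P}]$, the exponential and the powers $p^{n}$ being taken for the concatenation product of $\mathcal{U}(\g)$; the multiplication map $\hat m$ sends $p^{n}$ to $p^{\centerdot n}$, so that $\hat m(P)=\exp_{\centerdot}(p)$, which I again denote by $P$, while $\hat m(\mathcal{S}(P))=\exp_{\centerdot}(-p)=P^{-1}$. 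By Proposition \ref{prop:coalgebrarelative} the operators $[\bm{\rho}]$ and $[\bm{\lambda}]$ are coalgebra morphisms, hence carry $P$ to group-like elements of $\mathcal{U}(\a)$, which are determined by their logarithms $\mathrm{sol}_1\circ[\bm{\rho}]$ and $\mathrm{sol}_1\circ[\bm{\lambda}]$.

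\textit{The two translations.} The preceding proposition gives $\mathrm{sol}_1\circ[\bm{\rho}]=\rho\circ\hat m$ on $\ker\varepsilon$, so $\mathrm{sol}_1([\bm{\rho}](P))=\rho(\hat m(P)-1)=P\centerdot I-I$; exponentiating in $\mathcal{U}(\a)$ and realising the outcome inside $\mathbb{C}\llbracket\mathcal{P}\rrbracket$ yields $[\bm{\rho}](P)=P\centerdot I$. The left translation is the mirror image: in the partition formula for $[\bm{\lambda}]$ the factor $(-1)^{n}$ together with the reversal $\bar\pi$ of each block turns $\exp_{\centerdot}(p)$ into $\exp_{\centerdot}(-p)=P^{-1}$ and multiplication on the right by $I$ into multiplication on the left, so that $\mathrm{sol}_1([-\bm{\lambda}](P))=I\centerdot P^{-1}-I$ and hence $[-\bm{\lambda}](P)=I\centerdot P^{-1}$. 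Equivalently, one may feed the diagonal argument $p^{n}$ straight into the partition formulae — every block $\pi_j$ then contributing $p^{\centerdot|\pi_j|}\centerdot I$ — and resum the resulting set-partition series by the exponential formula.

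\textit{The combination $\rho-\lambda$.} For the last identity I would not recompute but invoke the factorisation $[\rho-\lambda]=[\bm{\rho}]\#[\bm{\lambda}]^{-1^{\#}}$ furnished by the matching-operator/Semenov--Tian--Shanskii machinery (Theorem \ref{thm:Matching} and the $\#$-product), combined with the two closed forms just found: evaluating $\#$ on the group-like $P$, for which $\Delta P=P\otimes P$, assembles the conjugate $P\centerdot I\centerdot P^{-1}$. This is consistent with the infinitesimal picture, in which $\rho-\lambda$ is the $\mathcal{O}$-Lie operator $x\mapsto x\centerdot I-I\centerdot x=\lieinv{x}{I}$, whose integration along the group-likes is precisely the adjoint action $P\mapsto P\centerdot I\centerdot P^{-1}$.

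\textit{Main obstacle.} The delicate point is the realisation step: one must verify that exponentiating the computed primitive in $\mathcal{U}(\a)$ and transporting it to $\mathbb{C}\llbracket\mathcal{P}\rrbracket$ returns $P\centerdot I$ (respectively $I\centerdot P^{-1}$) exactly, and — if the partition route is taken — that the noncommutativity of the block-contributions $p^{\centerdot m}\centerdot I$ inside $\mathcal{U}(\a)$ does not obstruct the resummation. Here the identity $\rho(a)\vartriangleleft\rho(b)=\rho(a\vartriangleleft\rho(b)+a\centerdot b)$ of the preceding proposition is what forces the collapse to be exact rather than merely valid up to lower-order corrections.
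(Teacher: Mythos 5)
Your proposal is correct and follows essentially the same route as the paper, whose entire justification is the single remark that one writes the group-like $P$ as the exponential of a primitive and applies the preceding proposition: your reduction via $\mathrm{sol}_1\circ[\bm{\rho}]=\rho\circ\hat m$ on $\ker\varepsilon$, the mirror argument with the sign and block-reversal for $[-\bm{\lambda}]$, and the appeal to Theorem \ref{thm:semenovtianshanskii} together with the matching property of $(-\lambda,\rho)$ for $[\rho-\lambda]=[\rho]\#[\lambda]^{-1^{\#}}$ are exactly the ingredients the author relies on. The ``realisation step'' you single out as the main obstacle (transporting the group-like of $\hat{\mathcal{U}}(\a)$ determined by the logarithm $P\centerdot I - I$ back into $\mathbb{C}\llbracket\mathcal{P}\rrbracket$) is indeed the only delicate point, and the paper leaves that identification just as implicit, so your write-up is, if anything, more complete than the original.
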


\begin{remark}
We might take here the opportunity to highlight connections with free probability theory. A certain transform, called the $T$-transform, plays a major role in the computations of the distribution of the product of two freely independent non-commutative random variables. Given a random variable $a$, $T_a$ is a series in $\mathbb{C}\llbracket{\rm Hom} (B)\rrbracket$ where:
\begin{enumerate}
\item $B$ is a complex unital algebra,
\item ${\rm Hom}(B)$ stands for the endomorphism operad of $B$; ${\rm Hom}(B)(n)={\rm Hom}_{Vect_{\mathbb{C}}}(B^{\otimes n},B),~n\geq 1$.
\end{enumerate}
The $T$-transform $T_a \in \mathbb{C}1\oplus \mathbb{C}\llbracket{\rm Hom}(B)\rrbracket$ of a random variable $a$ is a series on multilinear maps on $B$ with values in $B$ which starts by $1$:
$$
T_a = 1+\sum_{n\geq 1}T_a^{(n)},\quad T_a^{(n)}:B^{\otimes n} \rightarrow B
$$
Given two freely independent random variables, the $T$-transform $T_{ab}$ of the product $ab$ factorizes as follows:
$$
T_{ab}=T_a\circ(T_b\centerdot I \centerdot T_B^{-1})\centerdot T_b,
$$
where $\centerdot$ is the pointwise product of series and $\circ$ is the composition product $\times$:
$$
(A \circ B)^{(n)} (x_1,\ldots,x_n) = \sum_{\substack{k\geq 1,\\ q_1+\cdots+q_k=n}}A_k(B_{q_1}(x_1,\ldots,x_{q_1}),\ldots,B_{q_k}(x_{q_1+\cdots+q_{k-1}+1},\ldots,x_{n})). 
$$
We elaborate on this in the second part of this work.
\end{remark}
It is straightforward that for the above-defined product $\#$, one has $\delta \subset {\rm Com}({\rm End}(H,K), \#)$ (the commutator Lie algebra of ${\rm End}(H,K)$  for the product $\#$). 

We denote by $G^{inv}_{\#}$ the set of all invertible elements in $({\rm End}(H,K), \#)$.
For the following proposition, we specialize the general setting to the one where $H$ is a reduced graded conilpotent coalgebra (and thus the envelope of its primitive elements by the Milnore--Moore--Cartier theorem).
\begin{definitionproposition} 
\label{def:guinoudomgroup}
Let $(\g,\a,\vartriangleleft)$ be a Lie module.
The set $G_{\Delta}$ of all coalgebra morphisms from $\mathcal{U}(\g)$ to $\mathcal{U}(\a)$ is a group for the product $\#$ with the unit $\varepsilon_{\mathcal{U}(\a)} \circ \eta_{\mathcal{U}(\g)}$; $G_{\Delta} \subset G^{inv}_{\#}$ and
$$
A^{-1^{\star}} = \antipode_{\univalgebra} \circ A, \quad A \in G_{\Delta}.
$$
\end{definitionproposition}

\begin{proof} 
The fact that the set $G_{\Delta}$ of all coalgebra morphisms is stable by $\#$ is a simple consequence of the fact that $\preliediff$ and $\mathcal{S}_{K}$ are compatible with the coproducts. It remains to show that any coalgebra morphism $A: \mathcal{U}(\g) \to \mathcal{U}(\a)$ is invertible. This follows from the following: $$ A = \varepsilon_{\mathcal{U}(\a)} \circ \eta_{\mathcal{U}(\mathfrak{g})} + \bar{A}
$$
where $\bar{A}$ is pointwise nilpotent for $\#$: for any $E \in \mathcal{U}(\g)$ there exists $n(E)$ such that $A^{\# n(E)}(E)=0.$
\end{proof}
We use $G^{GO}$ for the group $({G_{\Delta},\#})$.
\begin{proposition}
\label{prop:inverseOhopfoperator}
Let ${\bT}:\mathcal{U}(\g) \to \mathcal{U}(\a)$ be a Hopf $\mathcal{O}$-operator. Then $\bf T$ is invertible for the product $\#$ and ${\bf T}^{\# -1} = {\bf T} \circ \mathcal{S}_{\mathcal{U}(\g)}$.
\end{proposition}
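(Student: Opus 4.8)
The plan is to take advantage of the fact that, by Definition--Proposition \ref{def:guinoudomgroup}, the set $G_{\Delta}$ of coalgebra morphisms $\mathcal{U}(\g)\to\mathcal{U}(\a)$ is already a \emph{group} under $\#$. Since $\bT$ is a coalgebra morphism and, $\mathcal{U}(\g)$ being cocommutative, $\antipode_{\mathcal{U}(\g)}$ is a coalgebra morphism as well, the candidate inverse $\bT\circ\antipode_{\mathcal{U}(\g)}$ again lies in $G_{\Delta}$. Hence $\bT$ is automatically $\#$-invertible, and to identify the inverse it suffices to verify the single identity $\bT\,\#\,(\bT\circ\antipode_{\mathcal{U}(\g)})=\eta_{\mathcal{U}(\a)}\circ\varepsilon_{\mathcal{U}(\g)}$; membership in the group promotes this one-sided inverse to a two-sided one.

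The first step is to unfold the definition of $\#$. Writing $C:=\bT\circ\antipode_{\mathcal{U}(\g)}$ and using that $C$ is a coalgebra morphism (so $\Delta\circ C=(C\otimes C)\circ\Delta$, which is where cocommutativity enters, to make $\antipode_{\mathcal{U}(\g)}$ comultiplicative), together with coassociativity to relabel, the defining formula of Definition--Proposition \ref{def:Hopfsmash} collapses to
\[
(\bT\,\#\,C)(f)=\bT\big(f_{(1)}\preliediff\antipode_{\mathcal{U}(\a)}(\bT(\antipode_{\mathcal{U}(\g)}(f_{(2)})))\big)\,\bT(\antipode_{\mathcal{U}(\g)}(f_{(3)})),
\]
in the Sweedler notation $\Delta^{(2)}f=f_{(1)}\otimes f_{(2)}\otimes f_{(3)}$.

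The second step merges the two occurrences of $\bT$ by means of the $\mathcal{O}$-Hopf relation \eqref{def:Ohopfoperator}, $\bT(e)\bT(f)=\bT((e\preliediff\bT(f_{(1)}))f_{(2)})$. Combined with the fact that $\preliediff$ is a \emph{unital} algebra action of $\mathcal{U}(\a)$ on $\mathcal{U}(\g)$ — so that $(x\preliediff k)\preliediff k'=x\preliediff(kk')$ and $x\preliediff\mathbf 1=x$ — the inner argument reorganizes into $f_{(1)}\preliediff\big[\antipode_{\mathcal{U}(\a)}(C(f_{(2)}))\,C(f_{(3)})\big]$ times $\antipode_{\mathcal{U}(\g)}(f_{(4)})$, all inside a single $\bT$. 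Now $\antipode_{\mathcal{U}(\a)}(C(f_{(2)}))\,C(f_{(3)})$ is exactly $\big((\antipode_{\mathcal{U}(\a)}\circ C)\ast C\big)$ evaluated on the middle chunk of $f$; since $C$ is a coalgebra morphism, $\antipode_{\mathcal{U}(\a)}\circ C$ is its convolution inverse, so this factor equals $\varepsilon\cdot\mathbf 1_{\mathcal{U}(\a)}$. Unitality of $\preliediff$ then leaves $\bT\big(f_{(1)}\,\antipode_{\mathcal{U}(\g)}(f_{(2)})\big)$, and the antipode axiom $f_{(1)}\antipode_{\mathcal{U}(\g)}(f_{(2)})=\varepsilon_{\mathcal{U}(\g)}(f)\,\mathbf 1$ together with $\bT(\mathbf 1)=\mathbf 1$ yields $\varepsilon_{\mathcal{U}(\g)}(f)\,\mathbf 1_{\mathcal{U}(\a)}$, i.e. the $\#$-unit, as required.

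I expect the main obstacle to be the Sweedler index bookkeeping rather than any conceptual difficulty: one must track carefully how the coproduct of $C(f_{(2)})$ interleaves with the outer coproduct of $f$, so that after applying the $\mathcal{O}$-Hopf relation the convolution-inverse cancellation lands on precisely the correct \emph{middle} factor and leaves $f_{(1)}\otimes\antipode_{\mathcal{U}(\g)}(f_{(2)})$ poised for the antipode axiom. Cocommutativity of $\mathcal{U}(\g)$ is used repeatedly here, both to treat $\antipode_{\mathcal{U}(\g)}$ as a coalgebra morphism and to realign the tensor factors; everything else is the bookkeeping-heavy but routine verification sketched above.
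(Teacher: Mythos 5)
Your proof is correct, but it takes a genuinely different route from the paper's. The paper proves the statement (in the proof displayed under the corollary that follows it) by checking separately that $\bT\circ\antipode_{\mathcal{U}(\g)}$ is a right \emph{and} a left $\#$-inverse, and both checks are carried out in point-free convolution calculus using the antipode machinery of Section~\ref{sec:antipodeposthopf}: the intertwining relation $\antipode_{\mathcal{U}(\a)}\circ\bT=\bT\circ\antipode_{H_{\bT}}$, the formula $\antipode_{H_{\bT}}=e_{\bT}\circ\antipode_{\mathcal{U}(\g)}$ of Theorem~\ref{thm:tttransformtwoantipodes}, and the fixed-point equation defining $e_{\bT}$. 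You instead (i) invoke the group structure of $(G_{\Delta},\#)$ from Definition-Proposition~\ref{def:guinoudomgroup} to reduce the claim to a single one-sided identity (legitimate, since in a group a one-sided inverse of an element is the two-sided one), and (ii) verify that identity by a direct Sweedler computation that uses only the definition of $\#$, the $\mathcal{O}$-Hopf relation~\eqref{def:Ohopfoperator}, the standard fact that $\antipode_{\mathcal{U}(\a)}\circ C$ is the convolution inverse of a coalgebra morphism $C$, and the antipode axiom; your index bookkeeping
$\bT\#C(f)=\bT\bigl((f_{(1)}\preliediff[\antipode_{\mathcal{U}(\a)}(C(f_{(2)}))\,C(f_{(3)})])\,\antipode_{\mathcal{U}(\g)}(f_{(4)})\bigr)$ is accurate and the cancellation lands where you say it does. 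The only implicit ingredients are that $\preliediff$ is a unital algebra action, i.e.\ $(x\preliediff k)\preliediff k'=x\preliediff(kk')$ and $x\preliediff\mathbf{1}=x$; these are not among the listed right-Hopf-module axioms but do hold in the enveloping-algebra setting of the proposition, where $\preliediff$ is by construction an algebra morphism $\mathcal{U}(\a)\to\mathrm{End}(\mathcal{U}(\g))$. The trade-off: your argument is more elementary and self-contained, not depending on Theorems~\ref{thm:inversion} and~\ref{thm:tttransformtwoantipodes}, while the paper's version is shorter once the $e_{\bT}$ calculus is available and recycles identities used throughout the rest of the paper.
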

The previous proposition has the following simple corollary.
\begin{corollaire}
Let ${\bf T}\colon \univalgebra \to \mathcal{U}(\mathfrak{a})\in G_{\#}$ be a Hopf $\mathcal{O}$-operator, then
$$
    ({\bf T}^{-1^*})^{-1^{\#}} = ({\bf T}^{-1^{\#}})^{-1^{*}}.
$$
\end{corollaire}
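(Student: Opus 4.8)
The plan is to show that both iterated inverses coincide with $\mathcal{S}_{\mathcal{U}(\a)}\circ{\bf T}\circ\mathcal{S}_{\mathcal{U}(\g)}$. The one tool I would isolate first is the convolution inverse of a coalgebra morphism: if $A\colon\mathcal{U}(\g)\to\mathcal{U}(\a)$ is a coalgebra morphism, then $A^{-1^{*}}=\mathcal{S}_{\mathcal{U}(\a)}\circ A$, since, using that $A$ is a coalgebra morphism and the antipode axiom, $(A*(\mathcal{S}_{\mathcal{U}(\a)}\circ A))(f)=A(f_{(1)})\mathcal{S}_{\mathcal{U}(\a)}(A(f_{(2)}))=\varepsilon_{\mathcal{U}(\a)}(A(f))\,1_{\mathcal{U}(\a)}=\varepsilon_{\mathcal{U}(\g)}(f)\,1_{\mathcal{U}(\a)}$ (the other side is symmetric). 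Thus $*$-inversion is simply post-composition with $\mathcal{S}_{\mathcal{U}(\a)}$.

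The right-hand side is then immediate. By Proposition \ref{prop:inverseOhopfoperator} one has ${\bf T}^{-1^{\#}}={\bf T}\circ\mathcal{S}_{\mathcal{U}(\g)}$, which is again a coalgebra morphism (a composite of such), so the previous paragraph gives $({\bf T}^{-1^{\#}})^{-1^{*}}=\mathcal{S}_{\mathcal{U}(\a)}\circ{\bf T}\circ\mathcal{S}_{\mathcal{U}(\g)}$.

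The left-hand side is the substance of the argument, and it is here that I expect the main obstacle. The point is that $\mathbf{T}^{-1^{*}}=\mathcal{S}_{\mathcal{U}(\a)}\circ{\bf T}$ is a coalgebra morphism but is \emph{not} an $\mathcal{O}$-Hopf operator: pushing \eqref{def:Ohopfoperator} through the anti-morphism $\mathcal{S}_{\mathcal{U}(\a)}$ reverses the two factors, giving $\mathcal{S}_{\mathcal{U}(\a)}{\bf T}(e)\,\mathcal{S}_{\mathcal{U}(\a)}{\bf T}(f)=\mathcal{S}_{\mathcal{U}(\a)}{\bf T}((f\preliediff{\bf T}(e_{(1)}))e_{(2)})$ instead of the required identity, so Proposition \ref{prop:inverseOhopfoperator} cannot be invoked directly. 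I would therefore compute the $\#$-inverse of $\mathcal{S}_{\mathcal{U}(\a)}\circ{\bf T}$ by hand: set $B=\mathcal{S}_{\mathcal{U}(\a)}\circ{\bf T}\circ\mathcal{S}_{\mathcal{U}(\g)}$ (the conjectured inverse) and expand $((\mathcal{S}_{\mathcal{U}(\a)}\circ{\bf T})\#B)(f)$ using the coalgebra-morphism form of the $\#$-product of Definition-Proposition \ref{def:Hopfsmash} (so that $B(f_{(2)})_{(1)}\otimes B(f_{(2)})_{(2)}$ becomes $B(f_{(2)})\otimes B(f_{(3)})$), simplify $\mathcal{S}_{\mathcal{U}(\a)}\circ B={\bf T}\circ\mathcal{S}_{\mathcal{U}(\g)}$ via $\mathcal{S}_{\mathcal{U}(\a)}^{2}=\mathrm{id}$ (cocommutativity of $\mathcal{U}(\a)$), and then fold the two resulting ${\bf T}$-values together with \eqref{def:Ohopfoperator}. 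After applying $\mathcal{S}_{\mathcal{U}(\a)}$ the computation reduces to the Hopf-algebraic identity
\[
\big(\mathcal{S}_{\mathcal{U}(\g)}(f_{(5)})\preliediff{\bf T}(f_{(1)}\preliediff{\bf T}\mathcal{S}_{\mathcal{U}(\g)}(f_{(3)}))\big)\big(f_{(2)}\preliediff{\bf T}\mathcal{S}_{\mathcal{U}(\g)}(f_{(4)})\big)=\varepsilon_{\mathcal{U}(\g)}(f)\,\1,
\]
which I would establish from the module-algebra and antipode compatibilities of $\preliediff$ together with cocommutativity of $\mathcal{U}(\g)$ (already on primitive $f$ one sees the two surviving terms $f$ and $\mathcal{S}_{\mathcal{U}(\g)}(f)$ cancel). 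Granting it, $((\mathcal{S}_{\mathcal{U}(\a)}\circ{\bf T})\#B)(f)=\mathcal{S}_{\mathcal{U}(\a)}\big({\bf T}(\varepsilon_{\mathcal{U}(\g)}(f)\,\1)\big)=\varepsilon_{\mathcal{U}(\g)}(f)\,1_{\mathcal{U}(\a)}$, the $\#$-unit, and the opposite composite is handled symmetrically; hence $({\bf T}^{-1^{*}})^{-1^{\#}}=B=\mathcal{S}_{\mathcal{U}(\a)}\circ{\bf T}\circ\mathcal{S}_{\mathcal{U}(\g)}$, matching the right-hand side.

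A cleaner but less self-contained alternative would be to show that the assignment $A\mapsto e_{A}$ is a group (anti)isomorphism from $(G_{\Delta},\#)$ onto the coalgebra automorphisms of $\mathcal{U}(\g)$ under composition, so that $\#$-inversion corresponds to compositional inversion of $e$-transforms; combined with Proposition \ref{prop:inversett} ($e_{A}^{-1}=\mathrm{id}\preliediff A^{-1^{*}}$, valid for every coalgebra morphism) and Theorem \ref{thm:inversion}, this reads off both composites without ever leaving the group of invertible coalgebra morphisms, sidestepping the order-reversal defect entirely. Either way, the single genuinely non-formal step is the $\#$-inverse of the non-$\mathcal{O}$-Hopf operator ${\bf T}^{-1^{*}}$.
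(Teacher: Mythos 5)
Your treatment of the right-hand side is fine, and you have correctly put your finger on the real issue: ${\bf T}^{-1^{*}}=\mathcal{S}_{\mathcal{U}(\a)}\circ{\bf T}$ is not an $\mathcal{O}$-Hopf operator, so Proposition \ref{prop:inverseOhopfoperator} cannot be applied to it, and the entire content of the corollary is the computation of $(\mathcal{S}_{\mathcal{U}(\a)}\circ{\bf T})^{-1^{\#}}$. (The paper's own proof does not confront this step either: it only verifies that ${\bf T}\circ\mathcal{S}_{\mathcal{U}(\g)}$ is a two-sided $\#$-inverse of ${\bf T}$, i.e.\ it re-proves Proposition \ref{prop:inverseOhopfoperator}, and leaves the passage to the displayed identity implicit.) The problem is that the Hopf-algebraic identity you propose to close the argument is false beyond primitive elements. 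Take $f=xy$ with $x,y\in\g$ primitive and expand the nine terms of $\Delta^{(2)}(xy)$ in
$$
G(f):=\big(\mathcal{S}_{\mathcal{U}(\g)}(f_{(3)})\preliediff{\bf T}(v_{(1)})\big)\,v_{(2)},\qquad v:=f_{(1)}\preliediff{\bf T}\mathcal{S}_{\mathcal{U}(\g)}(f_{(2)}).
$$
The terms $xy\otimes1\otimes1$ and $1\otimes1\otimes xy$ give $xy$ and $yx$; the terms $x\otimes1\otimes y$ and $y\otimes1\otimes x$ give $-yx-y\preliediff{\bf T}(x)$ and $-xy-x\preliediff{\bf T}(y)$; the terms $x\otimes y\otimes1$ and $y\otimes x\otimes1$ give $-x\preliediff{\bf T}(y)$ and $-y\preliediff{\bf T}(x)$; the remaining three vanish. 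Hence $G(xy)=-2\big(x\preliediff{\bf T}(y)+y\preliediff{\bf T}(x)\big)\neq\varepsilon(xy)\1$, and applying ${\bf T}$ and then $\mathcal{S}_{\mathcal{U}(\a)}$ yields
$$
\Big((\mathcal{S}_{\mathcal{U}(\a)}\circ{\bf T})\#(\mathcal{S}_{\mathcal{U}(\a)}\circ{\bf T}\circ\mathcal{S}_{\mathcal{U}(\g)})\Big)(xy)=2\big({\bf t}(x\preliediff{\bf t}(y))+{\bf t}(y\preliediff{\bf t}(x))\big),
$$
which is nonzero for a generic $\mathcal{O}$-Lie operator (e.g.\ a Rota--Baxter operator on a nonabelian Lie algebra). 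Checking the identity on primitives, where it does hold, is not enough; the cancellation there is an accident of degree one.

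The same computation shows that $\mathcal{S}_{\mathcal{U}(\a)}\circ{\bf T}\circ\mathcal{S}_{\mathcal{U}(\g)}$ is simply not the $\#$-inverse of $\mathcal{S}_{\mathcal{U}(\a)}\circ{\bf T}$ in general: solving $(\mathcal{S}_{\mathcal{U}(\a)}\circ{\bf T})\#D=\eta_{\mathcal{U}(\a)}\circ\varepsilon_{\mathcal{U}(\g)}$ degree by degree gives $D(xy)={\bf t}(x){\bf t}(y)-2{\bf t}(x\preliediff{\bf t}(y))-{\bf t}(y\preliediff{\bf t}(x))$, whereas $({\bf T}^{-1^{\#}})^{-1^{*}}(xy)={\bf t}(x){\bf t}(y)+{\bf t}(y\preliediff{\bf t}(x))$. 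So the two sides of the stated identity already disagree in degree two unless ${\bf t}(x\preliediff{\bf t}(y))+{\bf t}(y\preliediff{\bf t}(x))=0$ for all $x,y$; some additional hypothesis of this kind (or a different reading of one of the two inversions) is needed before any proof along your lines --- or along the paper's --- can succeed. Your fallback route via $A\mapsto e_{A}$ does not repair this: that map is not injective (it is constant equal to $\mathrm{id}$ when the action $\preliediff$ is trivial), so $\#$-inverses cannot be read off from compositional inverses of $e$-transforms.
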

\begin{proof}
We have to prove that ${\bf T} \circ \mathcal{S}_{\univalgebra}$ is a right- and left-inverse to ${\bf T}$ for the product $\# $. First, we compute:
\begin{align*}
    {\bf T} \# ({\bf T} \circ \antipode_{\mathcal{U}(\g)}) 
    &= {\bf T}({\rm id} \preliediff \antipode_{\mathcal{U}(\a)} \circ {\bf T } \circ \antipode_{\mathcal{U}(\g)})  *  ({\bf T} \circ \antipode_{\mathcal{U}(\g)}) \\
    &={\bf T}({\rm id} \preliediff ({\bf T} \circ \antipode_{H_{\bf T}} \circ \antipode_{\mathcal{U}(\g)}))  *  ({\bf T} \circ \antipode_{\mathcal{U}(\g)}) \\
    &={\bf T}({\rm id} \preliediff {\bf T} \circ e_{\bf T})  *  ({\bf T} \circ \antipode_{\mathcal{U}(\g)}) \\
    &=({\bf T} \circ e_{\bf T})  *  ({\bf T} \circ \antipode_{\mathcal{U}(\g)}) \\
    &={\bf T} \circ ((e_{\bf T} \preliediff {\bf T} \circ \antipode_{\mathcal{U}(\g)})\cdot \antipode_{\mathcal{U}(\g)}) \\
    &= {\bf T}({\rm id} \cdot \antipode_{\mathcal{U}(\g)}) \\
    &= {\bf T} \circ (\eta_{\mathcal{U}(\a)}\circ\varepsilon_{\mathcal{U}(\g)}) = \eta_{\mathcal{U}(\a)} \circ \varepsilon_{\mathcal{U}(\g)}\end{align*}
And second:
\begin{align*}
    ({\bf T}\circ \antipode_{\mathcal{U}(\g)}) \# {\bf T} &= {\bf T}(\antipode_{\mathcal{U}(\g)} \preliediff \antipode_{\mathcal{U}(\a)} \circ {\bf T})  *  {\bf T} \\
    &={\bf T}((\antipode_{\mathcal{U}(\g)} \preliediff {\bf T} \circ \antipode_{H_{\bf T}}) *_{\bf T} {\rm id}) \\
    &={\bf T}((\antipode_{\mathcal{U}(\g)} \preliediff ({\bf T} \circ \antipode_{\mathcal{U}(\g)} \circ e_{{\bf T} \circ \antipode_{\mathcal{U}(\g)}}))  *_{\bf T} {\rm id} ) \\
    &= {\bf T}((\antipode_{\mathcal{U}(\g)} \circ e_{\bf T \circ \antipode_{\mathcal{U}(\g)}})  *_{\bf T} {\rm id}) \\
    &={\bf T}(\antipode_{H_{\bf T}}  *_{\bf T}{\rm id}) = {\bf T} \circ (\eta_{\mathcal{U}(\a)} \circ \varepsilon_{\mathcal{U}(\g)}) = {\eta_{\mathcal{U}(\a)}} \circ \varepsilon_{\mathcal{U}(\g)}.
\end{align*}
\end{proof}
\begin{remark} Let $\bT\colon \univalgebra \to \mathcal{U}(\mathfrak{a})$ ba a Hopf $\mathcal{O}$-operator. Then, the following relations hold
\begin{equation*}
    \bT \circ \antipode_{\univalgebra} = \bT^{-1^{\#}},\quad S_{\mathcal{U}(\mathfrak{a})} \circ \bT =  \bT\circ \antipode_{\bT} = \bT^{-1^{\star}}.
\end{equation*}
\end{remark}
\begin{remark}
This is a continuation of Remark \ref{rk:topologicalgp}. It is obvious that the product \# makes sense in the setting of Remark \ref{rk:topologicalgp} as a product between the continuous map from $G$ to $\Gamma$. However, when it comes to evaluating the Lipschitz modulus of the product 
\begin{align*}
d_\Gamma(S\#T(g), S\#T(g')) &= d_{\Gamma}(S(g\cdot T(g)^{-1})T(g),S(g'\cdot T(g')^{-1})T(g')) \\ 
&\leq d_{\Gamma}(S(g\cdot T(g)^{-1}), S(g'\cdot T(g')^{-1}))+d_{\Gamma}(T(g),T(g^{\prime})) \\
&= \sigma_{S}(d_{G}(g\cdot T(g)^{-1}, g'\cdot T(g')) + \sigma_T d_G(g,g') \\
&=\sigma_S (d_G (g,g')+d_G(T(g)^{-1},T(g')^{-1}))+\sigma_T d_G(g,g')
\end{align*}
 When $G$ and $\Gamma$ are matrix subgroups of a group of unitary matrices (equipped with the Hilbert Schmidt distance), $d_G(T(g)^{-1},T(g')^{-1})) = d_G(T(g),T(g'))$ and 
 $$
 d_{\Gamma}(S \# T(g),S\# T(g'))\leq (\sigma_S + \sigma_T + \sigma_S\sigma_T)d_{G}(g,g').
 $$
 Thus, $S\#T$ is a strong contraction map in the case where:
$$
\sigma_S \leq \frac{1-\sigma_T}{1+\sigma_T}\quad(\Leftrightarrow \sigma_T \leq \frac{1-\sigma_S}{1+\sigma_S})
$$
 $$e_{\bT}= g \cdot (\bT^{-1^{\#}}(g))^{-1}$$

\end{remark}
\subsection{Exponential mapping}
We restrict ourselves to co-nilpotent Hopf algebras $H$ and $K$, which are isomorphic to the universal envelopes of their primitive elements by the Cartier-Milnor-Moore theorem.
More concretely, $H = \mathcal{U}(\mathfrak{g})$ and $K = \mathcal{U}(\mathfrak{a})$ with $\mathfrak{g}$ and $\mathfrak{a}$ the Lie algebras of their primitive elements. 
\label{sec:exponentialmapping}
Define the exponential mapping
$\exp_{\#} : \delta \to {\rm Hom}_{\rm Vect}(H,K)$ by:
\begin{equation}
\label{hushexp}
    \exp_{\#}(\alpha) 
    = \varepsilon \circ \eta_{H} + \sum_{n\geq 1} \frac{1}{n!} \alpha^{\# n}.
\end{equation}
Observe that the sum of the right-hand side of the above equality is locally finite since we assumed that $H$ is conilpotent.
Recall the definition of the standard filtration of a right Lie module, see Section \ref{sec:filtered}.
\begin{lemma}
We assume the postLie algebra $(\delta, \vartriangleleft^{*}, [-,-]_{\star})$ endowed with its standard filtration. Then $\delta$ is complete.
\end{lemma}
\begin{proof}
We call $\mathcal{F}_{\bullet}\delta$ the standard filtration of ${\delta}$.
An element of $x\in \hat{\delta}$ (the completion of $\delta$ with respect to $\mathcal{F}_{\bullet}\delta$) is an infinite formal sum $ \alpha = \sum_{i=1}^{\infty} \alpha_i$, with $x_i \in \mathcal{F}_i \delta$. 
We prove recursively that elements of $\mathcal{F}_{i}\delta$ vanish on $\{E_1\cdots E_k : E_j \in \g, j \leq k,\,k\leq i-1\}$. This actually follows from the co-niplotency hypothesis and the definition of $\Delta$. We thus get a well-defined map 
$$
\alpha = \sum_{i=1}^{\infty} \alpha_i \mapsto (h \mapsto \sum_{i=1}^{\infty} \alpha_i(h)) \in \End(H,K),
$$
inverse to the canonical map $\delta \mapsto \hat{\delta}$.
\end{proof}
\begin{proposition} 
The exponential mapping $\exp_{\#}$ defined in \eqref{hushexp} is a bijection from $\delta$ to the group $G_{\#}$.
\end{proposition}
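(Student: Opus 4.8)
The plan is to transplant the argument already used for the convolution exponential $\exp_{*}$, replacing the convolution filtration by one adapted to $\#$, and then to pin down the image by matching $\#$-primitive elements with $\#$-group-like elements. Throughout I grade $\mathcal{U}(\g)$ by word length, $\mathcal{U}(\g)=\bigoplus_{n\ge0}\mathcal{U}(\g)_n$ with $\mathcal{U}(\g)_0=\mathbb{C}\mathbf{1}$, and I filter $\mathrm{Hom}_{\mathrm{Vect}}(H,K)$ by order of vanishing, $\mathcal{F}^{p}=\{\phi:\ \phi|_{\mathcal{U}(\g)_n}=0\ \text{for all }n<p\}$. I write $u=\eta_K\circ\varepsilon_H$ for the $\#$-unit. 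First I record that $\delta\subseteq\mathcal{F}^{1}$: an $\varepsilon$-co-cocycle satisfies $\varepsilon_K\circ\alpha=0$ and, from the cocycle relation at $\mathbf 1$, $\alpha(\mathbf 1)\in\mathrm{Prim}(K)$; since $\exp_{*}(\alpha)$ is a coalgebra morphism one finds that the ordinary exponential of $\alpha(\mathbf 1)$ in $K$ equals $\mathbf 1$, whence $\alpha(\mathbf 1)=0$ in the connected graded setting. Dually every $A\in G_\Delta$ has $A(\mathbf 1)=\mathbf 1$, so $A-u\in\mathcal{F}^{1}$ and $G_\Delta\subseteq u+\mathcal{F}^{1}$.

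The key structural lemma is that $\#$ is a filtered product: $\mathcal{F}^{p}\#\mathcal{F}^{q}\subseteq\mathcal{F}^{p+q}$. This is immediate from the defining formula
\[
(A\#B)(f)=A\big(f_{(1)}\preliediff\antipode_{K}(B(f_{(2)})_{(1)})\big)\,B(f_{(2)})_{(2)},
\]
together with two facts: $\Delta$ splits the word length, $|f_{(1)}|+|f_{(2)}|=|f|$, and $\preliediff$ preserves the word length of its left argument, since it acts factor by factor (as recalled in the construction of the envelope). Hence if $f\in\mathcal{U}(\g)_n$ with $n<p+q$, every surviving term requires $|f_{(2)}|\ge q$ and $|f_{(1)}|=|f_{(1)}\preliediff(\cdots)|\ge p$, forcing $n\ge p+q$, a contradiction. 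Consequently $\alpha^{\#n}\in\mathcal{F}^{n}$ for $\alpha\in\delta$, so on each $\mathcal{U}(\g)_N$ only finitely many terms of $\exp_{\#}(\alpha)$ survive and the series converges pointwise; the same holds for $\log_{\#}(A)=\sum_{n\ge1}\tfrac{(-1)^{n-1}}{n}(A-u)^{\#n}$ when $A\in u+\mathcal{F}^{1}$. Working in the completion of $(\mathrm{Hom}_{\mathrm{Vect}}(H,K),\#)$ with respect to $\mathcal{F}$, which is a complete filtered associative algebra with pro-nilpotent ideal $\mathcal{F}^{1}$, the usual formal power-series identities show that $\exp_{\#}$ and $\log_{\#}$ are mutually inverse bijections between $\mathcal{F}^{1}$ and $u+\mathcal{F}^{1}$.

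It remains to show that this bijection restricts to $\delta\xrightarrow{\ \sim\ }G_\Delta$, i.e. that $\alpha\in\delta$ if and only if $\exp_{\#}(\alpha)\in G_\Delta$. Both conditions concern only $\Delta_K\circ(-)$ and are insensitive to the choice of product: being a $\varepsilon$-co-cocycle is the $\#$-\emph{primitivity} condition $\Delta_K\alpha=(u\otimes\alpha+\alpha\otimes u)\circ\Delta_H$, while being a coalgebra morphism is the $\#$-\emph{group-likeness} condition $\Delta_K A=(A\otimes A)\circ\Delta_H$. I would deduce the equivalence from a single compatibility: the map $\mathcal{D}=\Delta_K\circ(-)$ is an algebra morphism from $(\mathrm{Hom}_{\mathrm{Vect}}(H,K),\#)$ to $(\mathrm{Hom}_{\mathrm{Vect}}(H,K\otimes K),\#_2)$, where $\#_2$ is the product built from $\#$ and the splitting $\Delta_H$. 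Granting this, $\mathcal{D}(\exp_{\#}\alpha)=\exp_{\#_2}(\mathcal{D}\alpha)$, and since $\mathcal{D}\alpha=(u\otimes\alpha+\alpha\otimes u)\circ\Delta_H$ is a sum of the right additive type, its $\#_2$-exponential factors as $(\exp_{\#}\alpha\otimes\exp_{\#}\alpha)\circ\Delta_H$, which is exactly the coalgebra-morphism property; the converse follows by applying $\log_{\#_2}$. Restricting the bijection of the previous paragraph to the subsets cut out by this equivalence yields that $\exp_{\#}$ is a bijection from $\delta$ onto $G_\Delta=G_{\#}$.

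The main obstacle is precisely the compatibility of $\mathcal{D}$ with $\#$ invoked in the last step: unlike convolution, $\#$ entangles $\preliediff$ with $\antipode_{K}$, so that $\mathcal{D}(A\#B)=\mathcal{D}(A)\,\#_2\,\mathcal{D}(B)$ is not formal and must be verified by a Sweedler computation using cocommutativity of $H$ and the right Hopf-module axioms — the same ingredients that drive the proof of associativity of $\#$ and the proof (Definition--Proposition~\ref{def:guinoudomgroup}) that $G_\Delta$ is closed under $\#$. Indeed that closure statement is exactly the restriction of this compatibility to group-like elements, and the real work is to upgrade it from group-likes to all of $\mathrm{Hom}_{\mathrm{Vect}}(H,K)$, so that it survives differentiation and exponentiation.
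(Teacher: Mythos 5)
Your first two paragraphs are sound: the estimate $\mathcal{F}^{p}\#\mathcal{F}^{q}\subseteq\mathcal{F}^{p+q}$ does follow from the fact that $\Delta_H$ respects the length filtration and that $\preliediff$ does not increase it, and this makes $\exp_{\#}$ and $\log_{\#}$ mutually inverse bijections between $\mathcal{F}^{1}$ and $u+\mathcal{F}^{1}$. (Two minor caveats: you should work with the PBW \emph{filtration} rather than a word-length grading, and the claim that every $\alpha\in\delta$ kills $\mathbf{1}$ needs to be assumed or argued non-circularly, since the cocycle condition alone only forces $\alpha(\mathbf{1})\in\mathrm{Prim}(K)$.) The genuine gap is your third paragraph, and you have located it yourself: the entire content of the proposition is that the formal bijection $\mathcal{F}^{1}\leftrightarrow u+\mathcal{F}^{1}$ restricts to $\delta\leftrightarrow G_{\Delta}$, and your argument for this rests on an undefined product $\#_2$ on $\mathrm{Hom}(H,K\otimes K)$, an unproved multiplicativity of $\Delta_K\circ(-)$ with respect to it, and an unproved factorization $\exp_{\#_2}\bigl((u\otimes\alpha+\alpha\otimes u)\circ\Delta_H\bigr)=(\exp_{\#}\alpha\otimes\exp_{\#}\alpha)\circ\Delta_H$. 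None of these is routine: any $\#_2$ you write down is forced to be asymmetric (only the first tensor leg of the right factor can feed into $\preliediff$ and $\antipode_K$), so the standard convolution argument --- the two summands of $\Delta_K\circ\alpha$ commute and exponentiate separately --- does not transplant, and the converse direction (that $\log_{\#}$ of a coalgebra morphism is an $\varepsilon$-co-cocycle) needs its own argument. A proof that says ``granting this'' at its only nontrivial step is not yet a proof.

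The paper sidesteps this issue entirely, and the comparison is instructive. It takes as known that $\exp_{*}$ is a bijection from $\delta$ to $G_{\Delta}$, regards $(\delta,\preliediff^{*},[-,-]_{*})$ as a postLie algebra (Definition--Proposition \ref{definitionproposition:postlie}), and works in the completed envelope $\hat{\mathcal{U}}(\delta,[-,-]_{*})$ carrying both the formal convolution product and the formal Guin--Oudom product $\#_{\rm fo.}$; there the two formal exponentials share the same image (the group-like elements), and the postLie Magnus expansion $\Omega$ converts one into the other. Transporting this along the universal-property morphism $I$ into $(\mathrm{End}(H,K),\#)$ gives, for $\bT\in G_{\Delta}$ with $\bT=\exp_{*}(t)$, an element $\Omega(t)$ in the closure of $\delta$ satisfying $\exp_{\#}(\Omega(t))=\bT$, which is surjectivity; injectivity is exactly your $\log_{\#}$ observation. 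So your filtration paragraphs are compatible with, and would complement, the paper's argument, but your third paragraph must be replaced by (or expanded into) something like this Magnus-expansion mechanism: deciding which $\varepsilon$-co-cocycles exponentiate to coalgebra morphisms is precisely what the detour through the free postLie object buys.
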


\begin{proof}
We prove that $\exp_{\#}$ is surjective. 
Let $\mathcal{U}(\delta, [-,-]_{*})$ be the envelope of the convolution Lie algebra $(\delta, [-,-]_{*})$. The envelope of $(\delta, [-,-]_{\#})$ is isomorphic to $\mathcal{U}(\delta, [-,-]_{*})$ equipped with the Grossman--Larsson product
$$
    A \#_{\rm fo.} B 
    := (A \preliediff^{*}_{\rm fo.} B_{(1)})B_{(2)},
$$
where we use the subscript $\sharp_fo$ to distinguish betweem the product on $\End(H,K)$ and the product on $\mathcal{U}(\delta, [-,-]_{*})$.
From the universal property of $\mathcal{U}(\delta, [-,-]_{\#})$, we get the existence of an algebra morphism 
$$I:\mathcal{U}(\delta, [-,-]_{\#}) \to ({\rm End}(H,K),\#).$$ 

Recall that we denote by $\widehat{\mathcal{U}}(\delta, [-,-]_{*})$ and $\widehat{\mathcal{U}}(\delta, [-,-]_{\#})$ the completions of the respective universal envelopes (the universal envelopes are considered equipped with their natural filtrations span by monomial on Lie elements of bounded degrees). Also, the two formal exponentials, $\exp_{*_{\rm fo.}}$ and $\exp_{\#_{\rm fo.}}$, are bijections from the completion $\hat{\delta}$ to the set of group-like elements of $\widehat{\mathcal{U}}(\delta, [-,-]_{*})$ and $\widehat{\mathcal{U}}(\delta, [-,-]_{\#})$ (see Propostion \ref{prop:isomorphismexpo}), which are isomorphic as sets.

Pick ${\bf T} \in G_{\#}$. The convolution exponential $\exp_{*}$ is a bijection from ${\delta}$ to the convolution group $G_{\Delta}$ of coalgebra morphisms in ${\rm End}(H,K)$ from $H$ to $K$; this comes from the fact that the coproduct is co-nilpotent; there exists $t \in {\delta}$ such that $\exp_{*}(t)={\bf T}$. 

We can find an element $\Omega(t)\in \hat{\delta}$ such that $\exp_{*_{\rm fo}.}(t)=\exp_{{\#_{\rm fo.}}}(\Omega(t))$ ($\Omega(t)$ is called the post Lie Magnus expansion of $t$ \cite{ebrahimi2023magnus}).

Now, since $\delta$ is complete, $\Omega(t)$ is in $\delta$ and we apply $I$ to get the result.

\end{proof}

We compare the exponential mapping $\exp_\#$ and the Guin--Oudom recursion \eqref{eqn:RBop}. 
Recall the definition of the Solomon elements, see \cite{cartier2021classical} 
$$
{\rm sol}_1 = \log_{\cdot}({\rm id}_{\mathcal{U}(\mathfrak{g})}) = \sum_{n\geq 1} ({\rm id}_{\mathcal{U}(\mathfrak{g})}-\varepsilon)^{\cdot n},\quad {\rm sol}_n = \frac{1}{n!} e_n,\quad e_n = {\rm sol}_1^{\cdot n}
$$
where we recall that $\cdot$ denotes the convoltion product $\cdot$ on ${\rm End}(H,H)$
\begin{theorem}
\label{prop:solande} Let $(\g,\vartriangleleft,\a)$ be a Lie module.
Let ${\bf T}\coloni \univalgebra\to \mathcal{U}(\a)$ be a Hopf $\mathcal{O}$-operator then, with $\Gamma^{\bT} = (\Gamma^{\bT}_n:\mathcal{U}(\g)\to\mathcal{U}(\a))_{n \geq 1}$ the {sequence} defined by
$$
    \Gamma^{\bT}_n := {\bf T} \circ {\rm sol}_n : \univalgebra \to \mathcal{U}(\a):
$$
the following formula holds, for any $f \in H$:
\begin{equation}
    \Gamma^{\bT}_{n+1}(f) = \Gamma^{\bT}_n * \Gamma^{\bT}_1 (f)- \Gamma^{\bT}_n(f_{(1)} \preliediff \Gamma^{\bT}_{1}(f_{(2)})) = \frac{1}{n!}(\Gamma^{\bT}_1)^{\# n} (f)
\end{equation}
Hence,
$$
    \bT = \exp_{\#}(\bT \circ {\rm sol}_1).
$$
\end{theorem}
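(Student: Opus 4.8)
The plan is to evaluate $\exp_{\#}(\Gamma^{\bT}_1)$, where $\Gamma^{\bT}_1 = \bT\circ{\rm sol}_1$, by matching it against the Eulerian decomposition ${\rm id} = \sum_{n\geq 0}{\rm sol}_n$ of $\univalgebra$. Since ${\rm sol}_1 = \log_{*}({\rm id})$ is the first Eulerian idempotent, one has ${\rm sol}_n = \tfrac{1}{n!}({\rm sol}_1)^{*n}$ and $\exp_{*}({\rm sol}_1) = {\rm id}$. Hence it suffices to establish the single identity $(\Gamma^{\bT}_1)^{\#n} = \bT\circ({\rm sol}_1)^{*n}$ for all $n\geq 1$: granting it, $\Gamma^{\bT}_n = \bT\circ{\rm sol}_n = \tfrac{1}{n!}(\Gamma^{\bT}_1)^{\#n}$ (the closed form of the statement), and summing over $n\geq 0$ gives $\exp_{\#}(\Gamma^{\bT}_1) = \sum_{n\geq 0}\tfrac{1}{n!}(\Gamma^{\bT}_1)^{\#n} = \bT\circ\exp_{*}({\rm sol}_1) = \bT$, the $n=0$ terms matching because $\bT\circ\eta_{\univalgebra}\varepsilon$ is the $\#$-unit $\eta_{\mathcal{U}(\a)}\circ\varepsilon$. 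Grading and conilpotency make every sum locally finite, so no convergence issue arises.

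Everything therefore reduces to one intertwining lemma: for \emph{every} linear map $a\colon\univalgebra\to\univalgebra$,
\[
    (\bT\circ a)\,\#\,(\bT\circ{\rm sol}_1) = \bT\circ(a * {\rm sol}_1).
\]
I would prove it in two moves. First I simplify $\#$ when the \emph{right} factor $\beta$ is an $\varepsilon$-co-cocycle: using that $f\preliediff\eta_K = f$, that $\antipode_K$ acts as $-{\rm id}$ on primitives, and that $\Delta_K\beta(f_{(2)})$ splits as $\eta_K\otimes\beta(f_{(2)}) + \beta(f_{(2)})\otimes\eta_K$, the defining formula for $\#$ collapses to $(A\#\beta)(f) = A(f_{(1)})\beta(f_{(2)}) - A(f_{(1)}\preliediff\beta(f_{(2)}))$. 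This applies with $\beta = \Gamma^{\bT}_1$, which is an $\varepsilon$-co-cocycle because ${\rm sol}_1$ is one and $\bT$ is a coalgebra morphism (Proposition \ref{prop:coalgebrarelative}); crucially $A = \bT\circ a$ need \emph{not} be a co-cocycle, and the formula only requires the right factor to be one. Second, I feed $A = \bT\circ a$ into this and invoke the $\mathcal{O}$-Hopf identity of $\bT$: since $g := {\rm sol}_1(f_{(2)})$ is primitive, $\bT(e)\bT(g) = \bT\big((e\preliediff\bT(g_{(1)}))g_{(2)}\big)$ degenerates to $\bT(eg) = \bT(e)\bT(g) - \bT(e\preliediff\bT(g))$ with $e = a(f_{(1)})$, and the collapsed $\#$-formula becomes exactly $\bT\big(a(f_{(1)})\,{\rm sol}_1(f_{(2)})\big) = \bT\circ(a*{\rm sol}_1)(f)$.

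With the lemma in hand, $(\Gamma^{\bT}_1)^{\#n} = \bT\circ({\rm sol}_1)^{*n}$ follows by a one-line induction: the base case $n=1$ is the definition of $\Gamma^{\bT}_1$, and the inductive step applies the lemma with $a = ({\rm sol}_1)^{*n}$. Assembling this with the first paragraph yields $\bT = \exp_{\#}(\bT\circ{\rm sol}_1)$.

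I expect the intertwining lemma to be the only genuine obstacle. The subtlety is that $\#$ bakes $\bT$ into its action through $\preliediff$, so a priori $(\bT\circ a)\#(\bT\circ{\rm sol}_1)$ is not visibly of the form $\bT\circ(\,\cdot\,)$. The two features that rescue it are that the image of ${\rm sol}_1$ is \emph{primitive} — which simultaneously triggers the co-cocycle simplification of $\#$ and degenerates the $\mathcal{O}$-Hopf relation to its two-term form — together with the unitality $f\preliediff\eta_K = f$. One should also keep the normalizations straight: ${\rm sol}_n * {\rm sol}_1 = (n+1){\rm sol}_{n+1}$, so that the clean multiplicative statement is the $\#$-power formula $\Gamma^{\bT}_n=\tfrac1{n!}(\Gamma^{\bT}_1)^{\#n}$ rather than an unnormalized recursion.
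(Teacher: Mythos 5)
Your overall route is the paper's: collapse $\#$ against the $\varepsilon$-co-cocycle $\bT\circ{\rm sol}_1$ to the two-term form, identify the result with the degenerate $\mathcal{O}$-Hopf identity on primitive right factors, and sum the Eulerian decomposition ${\rm id}=\sum_n{\rm sol}_n$ to get $\bT=\exp_\#(\bT\circ{\rm sol}_1)$ (you are also right to flag the normalization: the clean statement is $\Gamma^{\bT}_n=\tfrac1{n!}(\Gamma^{\bT}_1)^{\#n}$, the paper's displayed recursion being for the unnormalized powers $\bT\circ({\rm sol}_1)^{*n}$). However, your intertwining lemma is false as stated, and its proof contains an unjustified substitution. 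The collapsed $\#$-formula with $A=\bT\circ a$ produces the term
\[
(\bT\circ a)\bigl(f_{(1)}\preliediff \bT({\rm sol}_1(f_{(2)}))\bigr)=\bT\Bigl(a\bigl(f_{(1)}\preliediff \bT({\rm sol}_1(f_{(2)}))\bigr)\Bigr),
\]
whereas the degenerate $\mathcal{O}$-Hopf identity applied to $e=a(f_{(1)})$, $g={\rm sol}_1(f_{(2)})$ produces
\[
\bT\Bigl(a(f_{(1)})\preliediff \bT({\rm sol}_1(f_{(2)}))\Bigr).
\]
In the first expression $a$ is evaluated on the \emph{twisted} argument $f_{(1)}\preliediff k$; in the second the twist is applied \emph{after} $a$. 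These agree only if $a$ intertwines the action, $a(X\preliediff A)=a(X)\preliediff A$, which fails for a generic linear map $a$ (take any projection not preserving the $\preliediff$-orbits). So the lemma cannot hold "for every linear map $a$", and the step "the collapsed $\#$-formula becomes exactly $\bT(a(f_{(1)}){\rm sol}_1(f_{(2)}))$" is where the argument breaks.

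The gap is repairable, because the only maps $a$ you feed into the induction are the convolution powers $({\rm sol}_1)^{*n}$, and these do commute with the action: for $a\in\a$ the operator $(\cdot)\preliediff a$ on $\univalgebra$ is simultaneously a derivation and a coderivation preserving $\g$, hence preserves each symmetric component of the PBW decomposition, so it commutes with every Eulerian idempotent ${\rm sol}_n$ and therefore with $({\rm sol}_1)^{*n}$; since $\preliediff$ is an algebra morphism $\mathcal{U}(\a)\to{\rm End}(\univalgebra)$ and $\bT\circ{\rm sol}_1$ is valued in $\a$, this is all that is needed. Restrict the lemma to such $a$ (or just to $a=({\rm sol}_1)^{*n}$), insert the commutation as an explicit step, and the rest of your argument — the co-cocycle collapse of $\#$, the $n=0$ term, the local finiteness — goes through and reproduces the paper's proof. (The paper's own computation relies on the same commutation when it rewrites $\bT(e_n\preliediff\bT\circ{\rm sol}_1)$ as $\gamma_n({\rm id}\preliediff\gamma_1)$, so this is a point worth making explicit in either version.)
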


\begin{proof} 
Let $n\geq 1$ be an integer greater than $1$. First, recall that $e_{n+1}=e_{n}\cdot {\rm sol}_1$ where $\cdot$ denotes the convolution product with respect to the concatenation product over $\univalgebra$. Second, recall that
$$
    \Delta \circ {\rm sol}_1 
    = ({\rm sol}_1 \otimes \varepsilon + \varepsilon \otimes {\rm sol}_1) \circ \Delta
$$
Then, for any $E \in \univalgebra$, one gets
\begin{align*}
    e_n *_{\bT} {\rm sol}_1 (E) 
    &= e_n(E_{(1)}) \preliediff \bT((sol_{1}(E_{(2)}))_{(1)}) \cdot {\rm sol}_1(E_{(2)})_{(2)} \\
    &=e_{n}(E_{(1)})\preliediff \bT({\rm sol}_1(E_{(2)})) + e_{n}(E_{(1)}){\rm sol}_1(E_{(2)})
\end{align*}
Hence,
$$
    e_{n} *_{\bf T} {\rm sol}_1 
    = e_{n+1} + e_{n} \preliediff \bT \circ {\rm sol}_1.
$$
We apply the above formula to get
\begin{align*}
    {\bf T}(e_{n+1}) 
    &= {\bf T}(e_n *_{{\bf T}} {\rm sol}_1 - e_n \preliediff {\bf T}\circ {\rm sol}_1) \\
    &=\Gamma^{\bT}_n * \Gamma^{\bT}_1 - {\bf T}(e_n \preliediff {\bf T} \circ {\rm sol}_1) \\
    &=\Gamma^{\bT}_n * \Gamma^{\bT}_1 - \Gamma^{\bT}_n (\mathrm{id} \preliediff
     \Gamma^{\bT}_1)
\end{align*}
where the last follows from the fact that for any elements $x \in \mathfrak{g}$, $E\in \mathcal{U}(\mathfrak{g})$, 
$
e_n(E \preliediff x) = e_n(E) \preliediff x.
$
In fact, if $e_k \preliediff {\rm sol}_1 = e_k({\rm id} \preliediff {\rm sol}_1)$ for any $k\geq n$, then
$$
e_{n+1} \preliediff x = (e_n\cdot e_1)\preliediff x = (e_n\preliediff x)\cdot e_1 + e_n\cdot (e_1\preliediff x) =e_n({\rm id}\preliediff x)\cdot e_1 + e_n\cdot e_1({\rm id}\preliediff x).
$$
Since $\Delta {\rm id} \preliediff x = {\rm id}\otimes ({\rm id} \preliediff x) \circ  \Delta+  ({\rm id} \preliediff x) \otimes {\rm id} \circ \Delta$, we actually get $e_{n+1}({\rm id \preliediff x}) = e_{n+1}\preliediff x$. The relation for $n=1$ is readily checked using the definition of $e_1 = {\rm sol}_1$. Hence, for any $n\geq 1$,
$$
{\bf T}(e_{n+1}) = \bT(e_n)\star_{\bT}\Gamma^{\bT}_1
$$
This ends the proof.
\end{proof}

We can give a more general result than Theorem \ref{prop:solande}. As an application, we show that the subset of elements of $G_{\#}$ whose inverse with respect to $\#$ can be computed by precomposition with the antipode of $H$ is much larger than the subset of $\mathcal{O}$ - Hopf operators.

\begin{theorem}
Let ${t} \in \delta$ and set, for any integer $n\geq 1$,
$$
    {t}_n = {t} \circ {\rm sol}_{n}~ \textrm{ and }
    {T} = \exp_{\#}(t).
$$
Then, for any integer $p\geq 1$,
\begin{equation*}
T \circ {\rm sol}_p=\sum_{\lambda \vdash p} \frac{1}{|\lambda|!}{\bf t}_{\lambda_1} \#\cdots\#\, {\bf t}_{\lambda_{|\lambda|}},
\end{equation*}
where $\lambda \vdash p$ denotes a partition into an order tuple $(\lambda_1,\ldots,\lambda_{|\lambda|})$ of $p$.
\end{theorem}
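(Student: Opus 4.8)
The plan is to exhibit the Eulerian decomposition $f\mapsto ({\rm sol}_p(f))_{p\ge 0}$ as a genuine algebra grading for the product $\#$, and then merely extract the degree-$p$ homogeneous component of $\exp_{\#}(t)$. Throughout I write ${\rm sol}_p=\tfrac1{p!}{\rm sol}_1^{*p}$ for the normalized Eulerian idempotents of $H$, so that ${\rm sol}_0=\eta_H\varepsilon_H$, $\mathrm{id}_H=\sum_{p\ge 0}{\rm sol}_p$ (a locally finite sum by conilpotency), and the ${\rm sol}_p$ are orthogonal idempotents under composition, ${\rm sol}_i\circ{\rm sol}_j=\delta_{ij}{\rm sol}_i$. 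Call a map $A\colon H\to K$ homogeneous of degree $q$ when $A\circ{\rm sol}_p=\delta_{pq}A$; then $A\circ{\rm sol}_q$ is always homogeneous of degree $q$, and $t_n=t\circ{\rm sol}_n$ is homogeneous of degree $n$ with $t=\sum_{n\ge 1}t_n$ (the $n=0$ term drops since $t\circ{\rm sol}_0=\varepsilon_H(\cdot)\,t(1_H)=0$ for $t\in\delta$).

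The two structural facts I would establish first are: (a) the coproduct identity
\[
\Delta_H\circ{\rm sol}_p=\sum_{i+j=p}({\rm sol}_i\otimes{\rm sol}_j)\circ\Delta_H,
\]
which follows from ${\rm sol}_1$ being an $\varepsilon$-co-cocycle (image in ${\rm Prim}(H)$, as recalled in the proof of Theorem \ref{prop:solande}) together with the binomial expansion of $\Delta\circ{\rm sol}_1^{*p}$ in the convolution algebra and the normalization $\tfrac1{p!}$; and (b) the commutation ${\rm sol}_p\circ(\mathrm{id}\preliediff k)=(\mathrm{id}\preliediff k)\circ{\rm sol}_p$ for every $k\in K$. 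For (b) I would first treat $k\in{\rm Prim}(K)$: the Hopf-module axioms make $\preliediff k$ simultaneously a derivation and a coderivation of $H$ with $1_H\preliediff k=0$ and $\varepsilon_H\circ(\preliediff k)=0$, so $\preliediff k$ is a two-sided derivation of the convolution product fixing $J=\mathrm{id}-\eta_H\varepsilon_H$ under pre- and post-composition; an immediate induction then gives $(\preliediff k)\circ J^{*n}=J^{*n}\circ(\preliediff k)$, hence commutation with ${\rm sol}_1$ and with every ${\rm sol}_p$. Since $K=\mathcal{U}({\rm Prim}(K))$ and $\preliediff$ is an algebra map $K\to{\rm End}(H)$, commutation extends to all of $K$.

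With (a) and (b) in hand the key lemma is that $\#$ is a graded product:
\[
(A\#B)\circ{\rm sol}_p=\sum_{i+j=p}(A\circ{\rm sol}_i)\#(B\circ{\rm sol}_j),\qquad A,B\colon H\to K.
\]
To prove it I would insert (a) into the defining formula $(A\#B)(f)=A\big(f_{(1)}\preliediff\antipode_K(B(f_{(2)})_{(1)})\big)B(f_{(2)})_{(2)}$ evaluated at ${\rm sol}_p(f)$, so that the two Sweedler legs acquire ${\rm sol}_i$ and ${\rm sol}_j$; the leg feeding $B$ directly yields $B\circ{\rm sol}_j$, while for the leg feeding $A$ one uses (b) to slide ${\rm sol}_i$ through the action $\preliediff$ and recover $A\circ{\rm sol}_i$. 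Equivalently, $A\circ{\rm sol}_i$ and $B\circ{\rm sol}_j$ being homogeneous of degrees $i,j$, their $\#$-product is homogeneous of degree $i+j$.

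The theorem then follows formally. Iterating the key lemma gives $t^{\#m}\circ{\rm sol}_p=\sum_{i_1+\cdots+i_m=p}t_{i_1}\#\cdots\#t_{i_m}$ with all $i_k\ge 1$ (as $t_0=0$); in particular this vanishes for $m>p$, so every sum is finite. Substituting into $\exp_{\#}(t)=\sum_{m\ge 0}\tfrac1{m!}t^{\#m}$ (the $m=0$ term dies against ${\rm sol}_p$ for $p\ge 1$) and grouping each ordered tuple $(i_1,\dots,i_m)$ with $\sum_k i_k=p$ according to the composition $\lambda\vdash p$ it defines, with $|\lambda|=m$, yields
\[
T\circ{\rm sol}_p=\sum_{m\ge 1}\frac1{m!}\sum_{\substack{\lambda\vdash p\\ |\lambda|=m}}t_{\lambda_1}\#\cdots\#t_{\lambda_{|\lambda|}}=\sum_{\lambda\vdash p}\frac1{|\lambda|!}\,t_{\lambda_1}\#\cdots\#t_{\lambda_{|\lambda|}},
\]
the asserted formula. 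The main obstacle is the key lemma, and inside it precisely step (b): the defining formula for $\#$ applies $\preliediff$ \emph{before} $A$, so one cannot commute ${\rm sol}_i$ to the inside of $A$ by merely knowing that the action preserves some a priori grading of $H$ (the Eulerian grading generally differs from the weight grading); one genuinely needs that the Eulerian idempotents commute with the whole action of $K$. Establishing (b) through the biderivation property of $\preliediff k$ is therefore the technical heart of the argument.
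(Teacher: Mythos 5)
Your argument is correct, and it runs on the same engine as the paper's own proof (which appears after the corollary following the theorem): the coproduct identity $\Delta\circ{\rm sol}_p=\sum_{a+b=p}({\rm sol}_a\otimes{\rm sol}_b)\circ\Delta$ fed into the definition of $\#$, together with orthogonality of the Eulerian idempotents. The difference is one of packaging and generality. The paper fixes $B=t\circ{\rm sol}_1$, sets $\gamma_{n,p}=(t\circ{\rm sol}_1)^{\# n}\circ{\rm sol}_p$, derives the recursion $\gamma_{n+1,p}=\gamma_{n,p-1}\#(t\circ{\rm sol}_1)$ with $\gamma_{n+1,1}=0$, and treats only the case $t_n=0$ for $n>1$, asserting that the general case follows along the same lines. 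You instead prove the clean general statement that $\#$ is graded for the Eulerian decomposition, $(A\#B)\circ{\rm sol}_p=\sum_{i+j=p}(A\circ{\rm sol}_i)\#(B\circ{\rm sol}_j)$, from which the theorem for an arbitrary $t\in\delta$ is pure bookkeeping; this is the statement one actually wants, and it buys the full theorem rather than its degree-one specialization. You are also right that the technical heart is your step (b): the paper's passage from $\bigl(t_1^{\# n}\circ(\mathrm{id}\preliediff t_1)\bigr)\circ{\rm sol}_p$ to $\gamma_{n,p-1}\circ(\mathrm{id}\preliediff t_1)$ silently commutes ${\rm sol}_{p-1}$ past the action $\preliediff k$, which is exactly the point you isolate and justify via the biderivation property of $\preliediff k$ for primitive $k$ and multiplicative extension to all of $K$. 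So your write-up is a faithful but more complete and more general version of the paper's argument.
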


\begin{corollaire}
Let $t:\g \to \a$ be a linear map and set ${\bf T}=\exp_{\#}(t\circ sol_1)$. Then
$$
    {\bf T}^{\# -1} = {\bf T} \circ \mathcal{S}_H.
$$
In particular,
$$
    ({\bf T}^{\# -1})^{* -1} = ({\bf T}^{* -1})^{\# -1}.
$$
\end{corollaire}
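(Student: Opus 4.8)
The plan is to reduce the first assertion to the preceding theorem by testing against the Eulerian idempotents. Write $\tau := t\circ {\rm sol}_1\in\delta$, so that $\mathbf{T}=\exp_{\#}(\tau)$ and, since $\exp_{\#}$ is an isomorphism onto $(G_\Delta,\#)$, its $\#$-inverse is $\mathbf{T}^{\#-1}=\exp_{\#}(-\tau)$. The first Eulerian idempotent is orthogonal to the higher ones, ${\rm sol}_1\circ{\rm sol}_p=\delta_{1,p}\,{\rm sol}_1$, so $\tau\circ{\rm sol}_p=t\circ({\rm sol}_1\circ{\rm sol}_p)=\delta_{1,p}\,\tau$. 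Consequently, in the formula of the preceding theorem applied to $\tau$ the only composition of $p$ that survives is $(1,\dots,1)$, whence $\mathbf{T}\circ{\rm sol}_p=\tfrac1{p!}\tau^{\#p}$ and, applying the same to $-\tau$, $\mathbf{T}^{\#-1}\circ{\rm sol}_p=\tfrac{(-1)^p}{p!}\tau^{\#p}$ for every $p\ge 1$.

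The remaining input is the action of the antipode on the Eulerian components, $\antipode_{\univalgebra}\circ{\rm sol}_p=(-1)^p{\rm sol}_p$. I would obtain this from the standard relations ${\rm sol}_1=\log_{*}({\rm id})$, $\antipode_{\univalgebra}={\rm id}^{*-1}=\exp_{*}(-{\rm sol}_1)$, and ${\rm sol}_p=\tfrac1{p!}{\rm sol}_1^{*p}$: precomposition/postcomposition by $\antipode_{\univalgebra}$ respects convolution powers of the single co-cocycle ${\rm sol}_1$ (cocommutativity of $\univalgebra$), so $\antipode_{\univalgebra}\circ{\rm sol}_1=-{\rm sol}_1$ and hence $\antipode_{\univalgebra}\circ{\rm sol}_p=\tfrac1{p!}(-{\rm sol}_1)^{*p}=(-1)^p{\rm sol}_p$. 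Combining the two computations, $\mathbf{T}\circ\antipode_{\univalgebra}\circ{\rm sol}_p=(-1)^p(\mathbf{T}\circ{\rm sol}_p)=\mathbf{T}^{\#-1}\circ{\rm sol}_p$ for all $p$; since the family $({\rm sol}_p)_{p\ge 0}$ (with ${\rm sol}_0=\eta\varepsilon$) is complete and orthogonal, summing over $p$ yields $\mathbf{T}^{\#-1}=\mathbf{T}\circ\antipode_{\univalgebra}$, the first claim.

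For the \emph{in particular} I would exploit two compatibilities of the operator $\Psi:=(-)\circ\antipode_{\univalgebra}$. First, $\Psi$ is a homomorphism for the convolution product $*$ (cocommutativity). Second, using the right Hopf module axiom $\antipode_{\univalgebra}(h\preliediff k)=\antipode_{\univalgebra}(h)\preliediff k$ together with cocommutativity, $\Psi$ is an automorphism of $({\rm End}(\univalgebra,\mathcal{U}(\a)),\#)$, i.e. $(A\#B)\circ\antipode_{\univalgebra}=(A\circ\antipode_{\univalgebra})\#(B\circ\antipode_{\univalgebra})$. Since every element of $G_\Delta$ is a coalgebra morphism, its convolution inverse is $X^{*-1}=\antipode_{\mathcal{U}(\a)}\circ X$. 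Feeding the first claim into the $*$-homomorphism property gives immediately
\[
(\mathbf{T}^{\#-1})^{*-1}=(\mathbf{T}\circ\antipode_{\univalgebra})^{*-1}=\mathbf{T}^{*-1}\circ\antipode_{\univalgebra}=\antipode_{\mathcal{U}(\a)}\circ\mathbf{T}\circ\antipode_{\univalgebra}.
\]
It then remains to identify $(\mathbf{T}^{*-1})^{\#-1}$ with the same expression, that is, to establish the antipode–inversion identity $W^{\#-1}=W\circ\antipode_{\univalgebra}$ for $W=\mathbf{T}^{*-1}=\antipode_{\mathcal{U}(\a)}\circ\mathbf{T}$.

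I expect this last point to be the main obstacle. The graded argument of the first part does \emph{not} transfer verbatim, because $W=\antipode_{\mathcal{U}(\a)}\circ\mathbf{T}$ need not lie in the class $\exp_{\#}(t\circ{\rm sol}_1)$: postcomposition by $\antipode_{\mathcal{U}(\a)}$ does not preserve that class, since $\#$ is lopsided in $\preliediff$ (only the left argument is acted on) and so $\antipode_{\mathcal{U}(\a)}\circ(-)$ is not a plain (anti)homomorphism for $\#$. The route I would pursue is a direct expansion of $\antipode_{\mathcal{U}(\a)}\circ(A\#B)$ that isolates precisely the term obstructing anti-multiplicativity, and then verify whether this correction cancels when $A,B$ are the group-like exponentials $\exp_{\#}(\pm\tau)$, so that $W\#(W\circ\antipode_{\univalgebra})=\eta\varepsilon$; equivalently one asks whether $\antipode_{\mathcal{U}(\a)}\circ\exp_{\#}(\tau)$ and $\antipode_{\mathcal{U}(\a)}\circ\exp_{\#}(-\tau)$ are mutually $\#$-inverse. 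This compatibility of $\antipode_{\mathcal{U}(\a)}$ with $\#$ on the image of $\exp_{\#}$ is the delicate core on which the symmetry of the two inversions rests, and I would treat it as the step most likely to require either extra cancellation coming from the module structure or a restriction on the action $\vartriangleleft$.
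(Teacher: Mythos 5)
Your proof of the first identity is correct and is in substance the paper's own argument: the paper establishes the recursion $\gamma_{n+1,p}=\gamma_{n,p-1}\#(t\circ{\rm sol}_1)$ together with $\gamma_{n+1,1}=0$, which is precisely the specialization (to $\tau=t\circ{\rm sol}_1$, so $\tau\circ{\rm sol}_p=\delta_{1,p}\tau$) of the preceding theorem that you invoke directly; both routes then conclude via $\antipode_{\univalgebra}\circ{\rm sol}_p=(-1)^p{\rm sol}_p$, a step the paper leaves implicit in ``unfolding the recursion'' and which you make explicit and justify correctly. Citing the theorem instead of re-deriving the recursion is legitimate and arguably cleaner.

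The ``in particular'' is where your proposal genuinely stops, and your instinct that this is the delicate point is right. Your reduction is sound: since precomposition with $\antipode_{\univalgebra}$ is a homomorphism for both $*$ and $\#$ (the latter using $\antipode_H(h\preliediff k)=\antipode_H(h)\preliediff k$ and cocommutativity), the second identity is equivalent to $W^{\#-1}=W\circ\antipode_{\univalgebra}$ for $W=\bT^{*-1}=\antipode_{\mathcal{U}(\a)}\circ\bT$, and this does not follow from the first claim because $W$ need not lie in the class $\exp_{\#}(s\circ{\rm sol}_1)$: in degree one $s=-t$ is forced, but in degree two $W(xy)$ carries the symmetric term $+\tfrac12\big(t(x\vartriangleleft t(y))+t(y\vartriangleleft t(x))\big)$ whereas $\exp_{\#}(-t\circ{\rm sol}_1)(xy)$ carries it with the opposite sign. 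Worse, a direct check (take $\g$ abelian, $x,y$ primitive) gives
\begin{equation*}
\big(W\#(W\circ\antipode_{\univalgebra})\big)(xy)=2\big(t(x\vartriangleleft t(y))+t(y\vartriangleleft t(x))\big),
\end{equation*}
which does not vanish for an arbitrary linear $t$; so the obstruction you isolate is not merely technical, and the statement appears to need an extra hypothesis on $t$ and $\vartriangleleft$ (vanishing of the symmetrization $t(x\vartriangleleft t(y))+t(y\vartriangleleft t(x))$) rather than a cleverer argument. You should be aware, though, that the paper's own proof consists only of the $\gamma_{n,p}$ recursion and offers no argument whatsoever for the ``in particular''; the gap you leave open is therefore also a gap (and, as the computation above suggests, possibly an error) in the source.
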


\begin{proof}
We only treat the case where ${t}_n = 0$ if $n > 1$. The general case follows by the same lines of arguments. First, we set
\begin{align*}
    \gamma_{n,p} 
    = ({t} \circ {\rm sol}_1)^{\# n}\circ {\rm sol}_p
\end{align*}
The objective is to find a recursion on $(\gamma_{n,p})_{n,p}$. Let $n\geq 1$ and $p\geq 1$. By using the definition of $\#$, we get
\begin{align*}
    \gamma_{n+1,p} =  (({t} \circ {\rm sol}_1)^{\# n} * ({t}\circ {\rm sol}_1))\circ {\rm sol}_p-({t} \circ {\rm sol}_1)^{\# n}({\rm id} \preliediff ({t} \circ {\rm sol}_1))\circ {\rm sol}_p
\end{align*}
Second, since the PBW isomorphism is a bijection of \emph{coalgebras}, one has, in fact:
\begin{align*}
    \Delta {\rm sol}_p 
    = \sum_{\substack{a,b \geq 0\\ a+b=p}} {\rm sol}_a \otimes {\rm sol}_b
\end{align*}
Because $sol_1 \circ sol_p=0$ if $p > 1$, one gets:
\begin{align}
\label{eqn:rec}
    \gamma_{n+1,p} 
    = \gamma_{n,p-1} * ({t} \circ {\rm sol}_1)- \gamma_{n,p-1}({\rm id} \preliediff ({t} \circ {\rm sol}_1)) = \gamma_{n,p-1} \# (t \circ sol_1).
\end{align}
For $n > 0$, one has
\begin{align*}
    \gamma_{n+1,1}
    & = (({t} \circ {\rm sol}_1)^{\# n} * ({t}\circ{\rm sol}_1)) \circ {\rm sol}_1 - ({t} \circ {\rm sol}_1)^{\# n}(\mathrm{id}\preliediff ({t} \circ {\rm sol}_1))\circ {\rm sol}_1 \\
    &= 0
\end{align*}
The result follows by unfolding the recursion \eqref{eqn:rec}.
\end{proof}

\subsection{Abstract STS Factorisation}
\label{sec:factorisation}
\begin{theorem}[Semenov-Tian-Shanskii factorisation]
\label{thm:semenovtianshanskii}
Let $\{\bm{s},\bm{t}\}$ be a matching $\mathcal{O}$-operator on a Lie module $(\g,\a,\vartriangleleft)$. With the notation introduced so far:
$$
[\bm{s} + \bm{t}] = [\bm{s}] \# [\bm{t}] = [\bm{t}] \# [\bm{s}].
$$
\end{theorem}

\begin{proof}
Let $\{\bf{s},\bf{t}\}$ be a matching $\mathcal{O}$ Lie operator. It is easy to see that the matching relations \eqref{eqn:MLie} implies the vanishing of the Lie bracket of ${\bf s}\circ {\rm sol}_1$ with ${\bf t}\circ {\rm sol}_1$:
$$
[{\bf s}\circ {\rm sol}_1,{\bf t}\circ {\rm sol}_1]_{\#}=0.
$$
In fact, 
\begin{align}
({\bf s}\circ {\rm sol}_1) \# ({\bf t}\circ {\rm sol}_1) &= -({\bf s}\circ {\rm sol}_1)( {\rm id} \preliediff {{\bf t}\circ {\rm sol}_1}) + ({\bf s}\circ {\rm sol}_1) \cdot ({\bf t}\circ {\rm sol}_1) \\
&= -{\bf s}({\rm sol}_1 \preliediff {{\bf t}\circ {\rm sol}_1}) + ({\bf s}\circ {\rm sol}_1) \cdot ({\bf t}\circ {\rm sol}_1)
\end{align}
Thus for any, $E \in \mathcal{U}(\g)$,
\begin{align}
({\bf s}\circ {\rm sol}_1) \# ({\bf t}\circ {\rm sol}_1)(E) =  -{\bf s}({\rm sol}_1(E_{(1)}) \preliediff {{\bf t}\circ {\rm sol}_1}(E_{(2)})) + ({\bf s}\circ {\rm sol}_1(E_{(1)})) \cdot ({\bf t}\circ {\rm sol}_1{E_{(2)}})
\end{align}

Setting $x = E_{(1)}, y=E_{(2)}$, we get 
$$
({\bf s}\circ {\rm sol}_1 \#{\bf t}\circ {\rm sol}_1)(E) = \sum {\bf s}(x){\bf t}(y) -{\bf s}(x \preliediff {\bf t}(y))
$$

Since $\sum x \otimes y = \sum y \otimes x$, we also get 
$$
({\bf t}\circ {\rm sol}_1 \#{\bf s}\circ {\rm sol}_1)(E) = \sum {\bf t}(y){\bf t}(x) -{\bf t}(y \preliediff {\bf t}(x))
$$
and the result follows.
\end{proof}

\begin{remark}{(Recovering the classical Semenov-Tian-Shanskii factorisation.)} 
Recall that a Rota-Baxter operator ${\bf r}\colon \mathfrak{g}\to \mathfrak{g}$ satisfies: 
\begin{align}
[{\bm{r}}(x),{\bm{r}}(y)]
    &={\bm{r}}([\hat{{r}}(x),y]_{\mathfrak{g}}+[x,{\bm{r}}(y)]_{\mathfrak{g}}+[x,y]), \quad x,y \in \mathfrak{g}
\end{align}  
If defining $\hat{r} = -{\rm id}_{\mathfrak{g}} - {\bf r} $, one has also:
\begin{align*}
    [\hat{\bm{r}}(x),\hat{\bm{r}}(y)]
    &=\hat{\bm{r}}([\hat{\bm{r}}(x),y]_{\mathfrak{g}}+[x,\hat{\bm{r}}(y)]_{\mathfrak{g}}+[x,y])\\
    [\hat{\bm{r}}(x),{\bm{r}}(y)]
    &={\bm{r}}([\hat{\bm{r}}(x),y]_{\mathfrak{g}}+\hat{\bm{r}}[x,{\bm{r}}(y)]_{\mathfrak{g}})
\end{align*}
for any $x,y \in \mathfrak{g}$.
Recall that ${\bm{R}}$ and ${\hat{\bm{R}}}$ are the Rota--Baxter--Hopf extending and coextending ${\bm{r}}$ and $\hat{\bm{r}}$ to the Hopf algebra $\univalgebra$.
Since ${\bm{r}}+\hat{{\bm{r}}}=-{\rm id}_{\mathfrak{g}}$ and ${\hat{\bm{R}}}\#{\bm{R}}$ is a Rota--Baxter--Hopf operator, we have $\hat{\bm{R}}\# {\bm{R}}=\mathcal{S}_{\mathcal{U}(\mathfrak{g})}$. We obtain the following factorisation for any $X \in \univalgebra$:
\begin{equation}
    X = (\mathcal{S}_{\univalgebra}\circ \hat{\bm{R}})(e_{\bm{R}}^{-1}(X_{(1)})) \mathcal{S}_{\univalgebra}({\bm{R}}(X_{(2)})).
\end{equation}
This last formula must be compared with Corollary 23 of \cite{EBRAHIMIFARD201719} wherein $R_+=\mathcal{S}_{\univalgebra}\circ \hat{\bm{R}}$ and $R_-={\bm{R}}$.
Let $x \in \mathfrak{g}$, then we get $$ \mathcal{S}_{\univalgebra}(\exp_{\star_{\bm{R}}}(x))
    ={\hat{\bm{R}}}\# {\bm{R}}(\exp_{\star_{\bm{R}}}(x))
    =\hat{\bm{R}}(e_{\bm{R}}^{-1}\exp_{\star_{\bm{R}}}(x))\exp_{\star_{\bm{R}}}{\bm{r}}(x).
$$
Recall that we have the following formula for the right action $\preliediff$ (the Adjoint action) of $\univalgebra$ over itself, for any element $X, Y\in\univalgebra$: 
$$
    X \preliediff Y 
    = \mathcal{S}_{\univalgebra}(Y_{(1)})XY_{(2)}.
$$
It is not difficult to see that: 
\begin{equation}
\label{eqn:expreflection}
    \exp_{\bf \hat{R}}(x)
    =\exp_{\star{\bm{R}}}(x)\preliediff_{\star{\bm{R}}}\exp_{\star_{\bm{R}}}(-x).
\end{equation}
The formula $e_{\bm{R}}^{-1}=\mathrm{id}\preliediff {\bm{R}}^{ -1^{\star}}=\mathrm{id}\preliediff \mathcal{S}_{\univalgebra}\circ {\bm{R}}$ yields, for any $x \in \mathfrak{g}$: 
$$
    e_{\bm{R}}^{-1}(\exp_{\star_{\bm{R}}}(x))
    =\exp_{\star_{\bm{R}}}({\bm{R}}(x))\preliediff \mathcal{S}_{\univalgebra}\circ{\bm{R}}(\exp_{\star_{\bm{R}}}({\bm{R}}(x)))
    =\exp_{\star_{\bm{R}}}({\bm{R}}(x))\preliediff_{\bm{R}}\exp_{\star_{\bm{R}}}(-{\bm{R}}(x))
$$
The last formula together with \eqref{eqn:expreflection} yield:
\begin{equation}
\label{eqn:ttransformreflection}
    e_{\bm{R}}^{-1}(\exp_{\star_{\bm{R}}}(x))
    =\exp_{\star_{\hat{\bm{R}}}}(x).
\end{equation}
Finally, we obtain:
$$
    \mathcal{S}_{\univalgebra}(\exp_{\star_{\bm{R}}}(x))
    =\exp_{\star_{\bm{R}}}({\hat{\bm{r}}}((x)))\exp_{\star_{\bm{r}}}({\bm{r}}(x)).
$$
This last equation implies the equality
$$
    \exp_{\bm{R}}(x)
    =\exp(-{\bm{r}}(x))\exp(-{\hat{\bm{r}}}(x)).
$$

\end{remark}

\begin{theorem}
\label{thm:compinverse}
Assume that $H$ is cocommutative. The compositional inverse of the $T$-transform yields a group morphism from the group $G_{\#}$ to the group $G_{\circ}$ of coalgebra morphisms on $H$ equipped with the compositional product;
\begin{equation}
    e^{-1^{\circ}} : G_{\#}\to \big(\mathrm{End}_{\Delta}(H,H),\circ\big).
\end{equation}
\end{theorem}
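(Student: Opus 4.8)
The plan is to prove that $e^{-1^{\circ}}\colon \bT\mapsto e_{\bT}^{-1}$ is a homomorphism of groups. Every $\bT\in G_{\#}$ is a coalgebra morphism $H\to K$, and the proposition asserting that $e_{\bT}$ is an isomorphism of coalgebras shows $e_{\bT}^{-1}$ is a coalgebra automorphism of $H$, so the map indeed takes values in the invertible elements of $(\mathrm{End}_{\Delta}(H,H),\circ)$; the $\#$-unit $\eta_K\circ\varepsilon_H$ is sent to $\mathrm{id}_H$ since $\mathrm{id}\preliediff(\eta_K\circ\varepsilon_H)=\mathrm{id}$. Hence it suffices to establish the multiplicativity $e_{\bS\#\bT}^{-1}=e_{\bS}^{-1}\circ e_{\bT}^{-1}$. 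The computation is modelled on the transparent group-module identity: from $e_{\bT}^{-1}(g)=g\cdot\bT(g)^{-1}$ and $(\bS\#\bT)(g)=\bS(g\cdot\bT(g)^{-1})\bT(g)$ (Remark \ref{rk:topologicalgp}), the axiom $(x\cdot a)\cdot b=x\cdot(ab)$ turns $e_{\bS\#\bT}^{-1}(g)=g\cdot(\bS\#\bT)(g)^{-1}$ into $e_{\bS}^{-1}(e_{\bT}^{-1}(g))$, and I shall transcribe this telescoping into Sweedler calculus.

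Two facts do the work. First, Proposition \ref{prop:inversett} provides the closed form $e_{\bT}^{-1}=\mathrm{id}\preliediff\bT^{-1^{*}}$. Second, for any coalgebra morphism $\bT\colon H\to K$ the convolution inverse is $\bT^{-1^{*}}=\antipode_{K}\circ\bT$, because $m_{K}\circ(\bT\otimes\antipode_{K}\bT)\circ\Delta_{H}=m_{K}\circ(\mathrm{id}\otimes\antipode_{K})\circ\Delta_{K}\circ\bT=\eta_{K}\circ\varepsilon_{K}\circ\bT=\eta_{K}\circ\varepsilon_{H}$. Expanding $e_{\bS}^{-1}\circ e_{\bT}^{-1}=(\mathrm{id}\preliediff\bS^{-1^{*}})\circ(\mathrm{id}\preliediff\bT^{-1^{*}})$ by means of the module law $(x\preliediff a)\preliediff b=x\preliediff(ab)$ and the compatibility of $\preliediff$ with the coproducts, I obtain $e_{\bS}^{-1}\circ e_{\bT}^{-1}=\mathrm{id}\preliediff C$ with
\[
C(f)=\bT^{-1^{*}}(f_{(2)})\,\bS^{-1^{*}}\!\big(f_{(1)}\preliediff\bT^{-1^{*}}(f_{(3)})\big).
\]

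Since $\bS\#\bT$ is again a coalgebra morphism, Proposition \ref{prop:inversett} gives $e_{\bS\#\bT}^{-1}=\mathrm{id}\preliediff(\bS\#\bT)^{-1^{*}}$, so the theorem reduces to the identity $C=(\bS\#\bT)^{-1^{*}}$, which I will establish by verifying $(\bS\#\bT)*C=\eta_{K}\circ\varepsilon_{H}$ and appealing to uniqueness of the convolution inverse. Writing out $(\bS\#\bT)(f_{(1)})\,C(f_{(2)})$ and using $\antipode_{K}\bT=\bT^{-1^{*}}$, the resulting product in $K$ contains an adjacent pair $\bT(\cdot)\,\bT^{-1^{*}}(\cdot)$; bringing the two coproduct legs that feed it next to one another and contracting by $\bT*\bT^{-1^{*}}=\eta_{K}\varepsilon_{H}$ leaves $\bS\big(e_{\bT}^{-1}(f)_{(1)}\big)\,\bS^{-1^{*}}\big(e_{\bT}^{-1}(f)_{(2)}\big)$, where by coassociativity the two arguments are precisely the legs of $\Delta(e_{\bT}^{-1}(f))$ (here again $\bT^{-1^{*}}=\antipode_{K}\bT$ and $e_{\bT}^{-1}$ is a coalgebra morphism). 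A second contraction by $\bS*\bS^{-1^{*}}$ then collapses the expression to $\varepsilon_{H}(e_{\bT}^{-1}(f))\,1_{K}=\varepsilon_{H}(f)\,1_{K}$, the last step because $e_{\bT}^{-1}$ preserves the counit.

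The one delicate point is this double contraction. The leg feeding the surviving $\bT$ and the leg feeding the inner $\bT^{-1^{*}}$ of $C$ are separated, in coproduct order, by the leg feeding $\bS^{-1^{*}}$, so the first cancellation becomes available only after permuting the Sweedler legs — this is exactly where cocommutativity of $H$, the standing hypothesis, enters. Once the legs are reorganized so that the two telescoping cancellations align, the identity $(\bS\#\bT)*C=\eta_{K}\varepsilon_{H}$ follows, whence $e_{\bS\#\bT}^{-1}=e_{\bS}^{-1}\circ e_{\bT}^{-1}$ and $e^{-1^{\circ}}$ is a group morphism.
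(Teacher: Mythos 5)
The paper states Theorem \ref{thm:compinverse} without any accompanying proof (only the reformulation in terms of the action of $G_{\circ}$ on $G_{*}$ follows it), so there is no argument in the source to compare yours against; judged on its own terms, your proof is correct and complete. The reduction of the multiplicativity $e_{\bS\#\bT}^{-1}=e_{\bS}^{-1}\circ e_{\bT}^{-1}$ to the single convolution identity $C=(\bS\#\bT)^{-1^{*}}$ via Proposition \ref{prop:inversett} is the right move: since $\bS\#\bT$ is again a coalgebra morphism (Definition--Proposition \ref{def:guinoudomgroup}), it possesses the two-sided convolution inverse $\antipode_{K}\circ(\bS\#\bT)$, so exhibiting $C$ as a one-sided inverse suffices. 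Your two telescoping contractions check out: writing $(\bS\#\bT)(f)=\bS(f_{(1)}\preliediff\bT^{-1^{*}}(f_{(2)}))\,\bT(f_{(3)})$, the pair $\bT*\bT^{-1^{*}}$ collapses first (after the cocommutative leg swap you correctly flag as the delicate point), and the two surviving factors are exactly $\bS$ and $\bS^{-1^{*}}$ evaluated on the two legs of $\Delta(e_{\bT}^{-1}(f))$, because $e_{\bT}^{-1}$ is a coalgebra morphism; a second contraction then yields $\eta_{K}\varepsilon_{H}(f)$. The only points left implicit are routine: the triviality of the action of $1_{K}$ is what sends the $\#$-unit to $\mathrm{id}_{H}$, and cocommutativity of $K$ is also quietly used when you treat $\antipode_{K}\circ\bT$ as a coalgebra morphism while expanding $\Delta(f_{(1)}\preliediff\bT^{-1^{*}}(f_{(2)}))$. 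Your group-module model computation $e_{S\#T}^{-1}(g)=g\cdot(S\#T)(g)^{-1}=(g\cdot T(g)^{-1})\cdot S(e_{T}^{-1}(g))^{-1}=e_{S}^{-1}(e_{T}^{-1}(g))$ correctly predicts the order of composition, confirming that the map is a genuine morphism rather than an anti-morphism, which is precisely why the statement is phrased for $e^{-1^{\circ}}$ rather than for $e$ itself.
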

Theorem \ref{thm:compinverse} admits the following reformulation. The group $G_{\circ}$ acts via pre-composition on the convolution group $G_{*}$ via group morphisms,
$$
    (A * B) \cdot X 
    = (A * B) \circ X = (A \circ X) * (B \circ X).
$$
Then Theorem \ref{thm:compinverse} together with Proposition \ref{prop:inversett} implies that $e^{-1}$ is a $\mathcal{O}$-operator on the group module $(G_{*},\cdot,G_{\circ})$. And, with the notations put in place in the previous Section, for any $A,B \in G_{*}$
$$
    A \# B 
    = A *_{e^{-1^{\circ}}} B,
$$
since $S_K \circ A = A^{-1^{*}}$.

\begin{remark}
From \cite{li2022post}, a set-solution to the Quantum Yang Baxter equation is given by
$$
    R_{\#}:G_{\#}\times G_{\#}\to G_{\#} \times G_{\#},~R_{\#}(\bS,\bT) = (\bS \# \bT \# (\bS \circ e_{\bT}^{-1^{\circ}})^{-1^{\#}}, \bS \circ e_{\bT}^{-1^{\circ}} )
$$
\end{remark}
\section{Abstract STS Factorisation in the operator-valued free probability theory}
\label{sec:operatorvaluedfreeproba}
The objectives of this Section are, first, to re-cast the various of a non-commutative disitribution in non-commutative probability (such as the $T$-transform, the cumulants transform $K$, the $H$-transform) and their properties with respect to the multiplication of independent (free, monotone but also conditionally free or conditionally independent) random variables by using the notions $\mathcal{O}$-groups and mathching $\mathcal{O}$ we have introduced. Second, we explain how the notion of cross product of Hopf algebras yields a neat analogy between the definitions of  transforms in free and monotone probability and their conditional counterparts (which so far only were defined in the setting of scalar non-commutative probability theory). Finally, we continue exploring the geometry of $\mathcal{O}$-operators by providing some abstract constructions motivated by non-commutative probability theory, so we have chosen to include them here. 
We refer to \cite{speicher1998combinatorial} for basic definitions about operator-valued free probability theory and to \cite{dykema2006stransform,dykema2007multilinear} for definition of the various transforms used in this section (we recall relevant definitions below though).
Through the section, we consider an operator-valued algebraic probability space $(\mathcal{A}, E,\mathcal{B})$.

Let $a,b \in \mathcal{A}$ two random variables and suppose that $\mathbb{E}(a)=\mathbb{E}(b)=1$. 

Consider their cumulants series: 
$$
K_a = 1+R_a = 1+\sum_{n\geq 1} R^{(n)}_a\in \mathbb{C}\llbracket{\rm End}\,B\rrbracket,~K_b=1+\sum_{n\geq 1} R^{(n)}_b \in \mathbb{C}\llbracket{\rm End}\,B\rrbracket,
$$
with for $x\in \{a,b\}$, $b_1,\ldots,b_n \in B$
$$
R^{(n)}_x \in {\rm Hom}(B^{\otimes n},B),\quad R^{(n)}_x(b_1,\ldots,b_n)=\kappa_{n+1}(ab_1,ab_2,\ldots,b_na)
$$
and $\kappa_{n}$ denotes the $n^{\rm th}$ free cumulant. 
In the following, we use the notation $I={\rm id}_{B}$.

Recall that we denote by $\mathbb{C}\llbracket{\rm End}\,B\rrbracket$ the formal series with entries in the endomorphisms operad of $B$:
\begin{align}
\alpha \in \mathbb{C}\llbracket{\rm End} \,B\rrbracket, \quad \alpha = \sum_{n\geq 1} \alpha^{(n)},\quad \alpha^{(n)}\colon B^{\otimes n} \to B.
\end{align}
We denote by $\mathbb{C}_0\llbracket {\rm End} B\rrbracket$ the group supported by formal series in $\mathbb{C}\oplus \mathbb{C}\llbracket{\rm End} \,B\rrbracket$ starting with $1$ and equipped with the Cauchy product $\centerdot$. We let $\Gamma$ the group supported by formal series in $\mathbb{C}\llbracket{\rm End} \,B\rrbracket$ starting with the unit $I$ and equipped with the composition product $\circ$:
$$
(\alpha \circ \beta)^{(n)}=\sum_{\substack{q_1+\cdots q_k=n, \\ k,q\geq 1}} \alpha^{(k)}\circ (\beta^{(q_1)}\otimes\cdots\otimes\beta^{(q_k)}). 
$$
The $T$-transform $T_x \in \mathbb{C}_{0}\llbracket {\rm End} B\rrbracket$of a random variable $x\in\mathcal{A}$ is defined recursively via the fixed point equation:
$$
K_x = T_x \circ (I\centerdot K_x)
$$
It has been proved that \cite{dykema2006stransform}
$$
T_{ab} = T_a \circ ({T_b\centerdot I\centerdot T^{-1^{\centerdot}}_b}) \centerdot T_b
$$
Recall that $(\mathbb{C}_0[{\rm End} B],\cdot,\Gamma)$ is a right group-module with the right action $\cdot$ defined by, for any $\alpha,\beta \in \CC\llbracket{\rm End}\,B\rrbracket$ :
$$
(1 + \alpha) \cdot \beta = 1 + \alpha\circ\beta.
$$
Generic elements of $\mathbb{C}_0[{\rm End} B]$ will be denoted $A,B\ldots$ while elements of $\Gamma$ will be denoted $\alpha,\beta$...
The right translation $\bm{\rho}$ by the unit is a group $\mathcal{O}$-operator, as well as $\bm{\lambda}\circ \mathcal{S}$ $(=\bm{\lambda}^{-1^{\#}})$ where $\mathcal{S}$ denotes the inversion map with respect to the group law of $G$ and $\lambda$ is the left translation by the unit $I = {\rm id}_B$ (the operator $\bm{\lambda}$ alone is a group  $\mathcal{O}$-operator but with respect to opposite group $G^{op}$). The pair $(\bm{\lambda}^{-1^{\#}}, \bm{\rho})$ is a matching group $\mathcal{O}$-operator , as proved in Example \ref{sec:transunit}:
$$
\bm{\rho} \colon A \mapsto A \centerdot I,\quad \bm{\lambda}^{-1^{\#}} \colon A \mapsto I\centerdot A^{-1^{\centerdot}}.
$$
This implies, see Section \ref{sec:Kupershmidt}, that $\mathbb{C}_0\llbracket {\rm End B} \rrbracket$ can be endowed with three different group products, $\star_{\bm{\lambda}^{-1^{\#}}}$, $\star_{\bm{\rho}},$  and $ \star_{\bm{\lambda}^{-1^\#}\# \bm{\rho}}$, for any $A,B \in \mathbb{C}_0\llbracket {\rm End B} \rrbracket$:
$$
A \star_{\bm{\lambda}^{-1^{\#}}} B = (A\cdot (I\centerdot B^{-1^{\centerdot}} ))\centerdot B,\quad 
A \star_{\bm{\rho}} B = (A\cdot (B\centerdot I) ))\centerdot B
$$
and 
$$
A \star_{\bm{\lambda}^{-1^{\#}}\#\bm{\rho}} B = (A \cdot (B\centerdot I\centerdot B^{-1^{\centerdot}})) \centerdot B.
$$
The fact that $(\bm{\lambda}^{-1^{\#}}, \bm{\rho})$ is a matching $\mathcal{O}$-operator implies the following important relations, see Theorem \ref{thm:semenovtianshanskii}:
\begin{align}
\label{eqn:subordination}
A \star_{\bm{\lambda}^{-1^{\#}}\#\bm{\rho}} B 
&= (A \cdot_{\bm{\lambda}}( B^{-1^{{\star_{\rho}}}}))\star_{\bm{\rho}} B = (A \cdot_{\bm{\rho}^{-1^{\#}}} (B^{-1^{\bm{\star_{\lambda^{-1^{\#}}}}}})) \star_{{\bm{\lambda^{-1^{\#}}}}} B 
\end{align}
We shall now indicate how those three products are related to non-commutative convolutions (both additive and multiplicative).
To begin with, in the context operator-valued monotone multiplicative convolution, it is customary to define the $H$-transform, which we will denote $H_a$ (and $H_b$) defined by (again in our notations):
\begin{align}
\label{eqn:htransform}
H_a = (T^{-1^{\centerdot}}_a \cdot (\frac{I}{1-I}))^{-1^{\star_{\bm{\lambda}}}}= (\Sigma_a)^{-1^{\star_{\bm{\lambda}}}},\quad H_{ba} = H_a \star_{\bm{\lambda}}H_b .
\end{align}
whenever $a-1$ and $b-1$ are monotone independent and where we have denoted 
$$
\Sigma_a = T^{-1^{\centerdot}}_a \cdot \frac{I}{1-I}.
$$
Let us see how $H_a$ can be expressed in term of the free cumulants series $K_a$ of $x$. First, by definition, the formal series $H_a$ satisfies the fixed point equation:
\begin{align}
H_a = (T_a\cdot \frac{I}{1-I}) \cdot_{\lambda} H_a
\end{align}
This fixed point equation implies that $H_a$ is also the unique solution $A_a$ to
$$
A_a = T_a \cdot ((1-I\centerdot H_a)^{-1}\centerdot(I\centerdot A_a)).
$$
On the other hand,
\begin{align*}
K_a\cdot ((1-I\centerdot H_a)^{-1} \centerdot I) &= T_a \cdot ((I\centerdot K_a) \circ ((1-I\centerdot H_a)^{-1}\centerdot I)) \\
&=  T_a \cdot ((1-I\centerdot H_a)^{-1}\centerdot I \centerdot K_a\circ ((1-I\centerdot H_a)^{-1} \centerdot I))
\end{align*}
And thus we derive the following formula for $H_a$
\begin{align}
\label{eqn:formulahx}
H_a = A_a = K_a\cdot ((1-I\centerdot H_a)^{-1} \centerdot I) = K_a \cdot_{\bm{\rho}}(1-I\centerdot H_a)^{-1}.
\end{align}
Let us stop a moment on the relations \eqref{eqn:htransform}. We observe that the relations between $\Sigma_a$ and $H_a$ on one hand is the same as the relation between $T_a$ and $K_a$ on the other hand, a double inversion, namely. 
Additionally pre-composition by the map $\frac{I}{1-I}$ (that is the action of $\frac{I}{1-I}$ through $\cdot$) is equivariant with respect the action $\cdot_{\bm{\lambda^{-1^{\#}}}\#\bm{\rho}}$:
$$
(A \cdot_{\bm{\lambda^{-1^{\#}}}\#\bm{\rho}}B)\cdot (\frac{I}{1-I} )= (A \cdot (\frac{I}{1-I}))\cdot_{\bm{\lambda^{-1^{\#}}}\#\bm{\rho}}(B\cdot (\frac{I}{1-I})).
$$
The above equivariance relation can be proved by using the explicit formulae for the product $\star_{\bm{\lambda}\#\bm{\rho}}$, see Proposition \ref{thm:tttransformtwoantipodes}.
Finally, the relations \eqref{eqn:subordination} can be described using the concept of subordination in non-commutative probability.
In fact, whenever $a,b$ are free independent then:
\begin{align*}
    H_{ab} = \Sigma_{ab}^{-1^{\star_{\bm{\lambda}}}}= ((\Sigma_a \cdot_{\bm{\rho}} (\Sigma_y^{-1^{\bm{\star_{\lambda}}}}))\star_{\bm{\lambda}} \Sigma_b )^{-1^{\star_{\bm{\lambda}}}} = H_b \star_{\lambda} (\Sigma_a \cdot_{\bm{\rho}}(\Sigma_y^{-1^{\star_{\bm{\lambda}}}}))^{-1^{\star_{\bm{\lambda}}}}).
\end{align*}
Setting $H_{a \dashv b} :=  (\Sigma_a \cdot_{{\bm{\rho}}} (\Sigma_b^{-1^{\star_{\bm{\lambda}}}}))^{-1^{\star_{\bm{\lambda}}}} $, we see that $H_{a \dashv b}$ satisfies the fixed point equation :
\begin{align*}
H_{a \dashv b} = (\Sigma^{-1}_x \cdot_{\bm{\rho}}(\Sigma_b^{-1^{\star_{\bm{\lambda}}}})) \cdot_{\bm{\lambda}} H_{a \dashv b} &= ((\Sigma_a^{-1}\cdot_{\bm{\lambda}} \Sigma^{-1^{\star_{\bm{\lambda}}}}_a)\cdot ({\bm{\lambda}}(\Sigma_a)\circ{\bm{\rho}}(\Sigma_b^{-1^{\star_{\bm{\lambda}}}})) \cdot_{{\bm{\lambda}}} H_{a \vdash b} \\
&=\Sigma_a^{-1^{\star_{\bm{\lambda}}}}\cdot (\bm{\lambda}(\Sigma_a) \circ {\bm{\rho}}(\Sigma^{-1^{\star_{\bm{\lambda}}}}_b)\circ \bm{\lambda}(H_{a\vdash b})).
\end{align*}
We use the relations \eqref{eqn:matchinggroup} to write :
\begin{align*}
(\bm{\lambda}(\Sigma_a) \circ {\bm{\rho}}(\Sigma^{-1^{\star_{\bm{\lambda}}}}_y)\circ \bm{\lambda}(H_{a\vdash b})) &= {\bm{\rho}}(\Sigma^{-1^{\star_{\bm{\lambda}}}}_b \cdot_{\bm{\lambda}} ( (\Sigma_a \cdot_{\bm{\rho}} \Sigma^{-1^{\star_{\bm{\lambda}}}}_b)^{-1^{\star_{\bm{\lambda}}}} ))\circ \bm{\lambda}(\Sigma_a \cdot_{\bm{\rho}} \Sigma^{-1^{\star_{\bm{\lambda}}}}_b) \circ \bm{\lambda}(H_{a\vdash b}) \\
&={\bm{\rho}}(\Sigma^{-1^{\star_{\bm{\lambda}}}}_b \cdot_{\bm{\lambda}} H_{a\vdash b})
\end{align*}
We obtain finally the following fixed point equation :
\begin{align}
\label{eqn:subordinationfixedpointeqn}
    H_{a\vdash b} = H_a \cdot_{\bm{\rho}} (H_b \cdot_{\bm{\lambda}} H_{a\vdash b})
\end{align}
We develop further the use of our formalism toward conditional independence in the next sections. 
\subsection{Conditional independences}
\label{sec:condindependence}
We first give the definitions of conditional free and monotone independence. We refer to \cite{belinschi2012infinite,popa2015multiplication,popa2013non} and \cite{popa2012realization} for the definitions in the context of operator-valued probability spaces and to \cite{hasebe2011conditionally,bozejko1996convolution} for their definitions in the scalar case.

Let $(\mathcal{A}, \varphi, \psi, B)$ a conditional operator-valued probability space; $\mathcal{A}$ is an unital complex algebras, $\mathcal{B}$ is acting on $\mathcal{A}$ from the right and from the left, $\varphi$ and $\psi$ are two $B$-bimodule morphisms. 

We say that two variables $a_1$ and $a_2$ are conditionally free if they are \emph{free with respect to $\psi$} and for any $p_1,\ldots p_n \in\langle a_1\rangle \cup \langle a_2 \rangle$:
\begin{equation*}
    \varphi(p_1\cdots p_n) = \varphi(p_1)\cdots \varphi(p_n)
\end{equation*}
whenever $p_j\in\mathcal{A}_{i_j},\psi(p_j)=0$ and $i_j\neq i_{j+1}$.

We say that the two variables $a_1$ and $a_2$ are \emph{conditionally monotone independent} if they are \emph{monotone independent with respect to $\psi$} and for any $p_1,\ldots,p_n \in \langle a_1 \rangle_0 \cup \langle a_2\rangle_0$, $p_i \in \mathcal{A}_{i_j}$, $i_j \neq i_{j+1}$
$$
\varphi(p_1\cdots p_n)= \varphi(p_1)\varphi(p_2\cdots p_n),\quad \varphi(p_1\cdots p_n)= \varphi(p_1p_2\cdots )\varphi(p_n)
$$
and for any $2 \leq i \leq n-1$:
$$
\varphi(p_1\cdots p_n) = \varphi(p_1\cdots p_{i-1})(\varphi(p_i)-\psi(p_i))\varphi(p_{i+1}\cdots p_n) + \varphi(p_1\cdots p_{i-1}\psi(p_i)p_{i+1}\cdots p_n)
$$
Given $\x = \{x_1,\ldots,x_n\} \in \mathcal{A}$, we denote by $\varphi_x$ and $\psi_x$ the joint distribution of $\x$ with respect to $\varphi$ and $\psi$, respectively:
\begin{align}
    \varphi_{\x}(p(X_1,\ldots,X_n)) = \varphi(p(\x)),\quad \psi_{\x}(p(X_1,\ldots,X_n)) = \psi(p(\x))
\end{align}
for any non-commutative polynomial $p\in \mathbb{C}[X_1,\ldots,X_n]$. Then given $a_1$ conditionally monotone independent from $a_2$, there exists two conditionally free random variables $\tilde{a_1},\tilde{a_2}$ in some operator-valued conditional probability space $(\tilde{\mathcal{A}}, \tilde{\varphi}, \tilde{\psi}, \mathcal{B})$ whose distributions under $\varphi$ are equal to the distribution of $a_1$ and $a_2$ under $\varphi$ and 
$$
\psi_{\tilde{a}_1}(p) = p(1),\quad \psi_{a_2}(q)=\psi_{\tilde{a}_2}(q),\quad p \in \mathbb{C}[X_2],~q \in \mathbb{C}[X_1]
$$
such that the following equality between linear functions on $\mathbb{C}[X_1,X_2]$ holds:
\begin{align}
    \varphi_{a_1,a_2} = \varphi_{\tilde{a}_1, \tilde{a}_2}.
\end{align}
\subsection{Cross products and conditional independences}
\label{sec:crossproduct}
We leverage the interpretation that we gave for the $T$-transform in Voiculescu's free probability theory, namely as the composition of two antipodes, see Proposition \ref{thm:tttransformtwoantipodes}, to define the relevant $T$-transform in the setting of operator-valued \emph{conditional freeness}. In particular, from the Hopf-algebraic perspectiv, going to this conditional setting among to taking the \emph{crossed product} of the Hopf algebra $H_{\bm{\lambda}^{-1^{\#}}\#\bm{\rho}}$ with the Hopf algebra $H_{\varepsilon}$, 
We recall the \emph{cross product construction}\footnote{\url{https://ncatlab.org/nlab/show/crossed+product+algebra}} in the particular setting of cocommutative Hopf algebras and of groups. Let $(H,\preliediff,K)$ be a right Hopf module. 
The \emph{crossed product} of $K$ and $H$ over the action $\preliediff$ is the Hopf algebra denoted $K \ltimes_{\preliediff} H$, which is equal as a coalgebra to $(K \otimes H, \tau_{23}\circ\Delta_K \otimes \Delta_H)$ and is equipped with the following unital associative product:
\begin{equation}
(k^a \ltimes h^a) (k^b \ltimes h^b) =   k^ak^b_{(1)} \ltimes (h^a \preliediff k^b_{(2)}) h^b. 
\end{equation}
On easily checks that $1_K \otimes 1_H$ is the unit for the above defined product.
The antipode of the crossed product $K\ltimes_{\theta} H$ Hopf algebra $S_{K\ltimes_{\preliediff} H}:K\ltimes_{\preliediff} H \to K\ltimes_{\preliediff} H $ satisfies the following formula:
\begin{align*}
\mathcal{S}_{K\ltimes_{\preliediff}H}(k\ltimes h) = \mathcal{S}_K(k_{(1)}) \otimes (\mathcal{S}_{H}(h) \preliediff \mathcal{S}_K(k_{(2)}) ).
\end{align*}
If $(G,\cdot, \Gamma)$ is a right group module, the crossed product of $G$ and $\Gamma$ over $\cdot$ is the group supported by the set $\Gamma \times H$ with the product defined by 
$$
(k^a \ltimes h^a)
(k^b \ltimes h^b) =   k^ak^b_{(1)} \ltimes (h^a \cdot k^b_{(2)}) h^b. 
$$
\subsubsection{Conditional $T$-transform}
Recall the definition of the Hopf algebra $H_{\bf T}$, given a $\mathcal{O}$ Hopf operator $\bT$ over a right Hopf module $(K,H,\preliediff)$ and the definition of $\preliediff_{\bf T}$, a right action of the Hopf algebra $H_{\bf T}$ on the Hopf algebra $H$. 
Inspired by applications to non-commutative probability developed below, we define the relative $e$-transform of $\bf T$ conditionally to the convolution unit $\eta_K\circ \varepsilon_{H}$ of $H$.
Notice that in our notations, $H_{\eta_K\circ \varepsilon_H}=H$ ($\eta_K\circ \varepsilon_H$ is a Hopf $\mathcal{O}$-operator on the right Hopf-module $(K,H,\preliediff)$ ) as Hopf algebras.

\begin{definition}[Relative $e$-transform] Let $\bT: H\to K$ be a Hopf $\mathcal{O}$-operator on the right Hopf module $(H,K,\preliediff)$.
\label{def:relativettransform}
Define the morphism of coalgebra 
$$e_{\bT\,|\,\varepsilon}\colon H\otimes H \to H \otimes H$$ 
by, for any $k\otimes h \in H\otimes H$
\begin{align*}
e_{\bT\,|\,\varepsilon}(k\otimes h) &:= \mathcal{S}_{H_{\bT} {\ltimes}_{\preliediff_{\bT}} H} \circ (\mathcal{S}_{H} \otimes \mathcal{S}_{H})(k \otimes h) \\
& = (S_{H_{\bT}} \circ \mathcal{S}_{H})(k_{(1)}) \,\otimes (h \preliediff_{\bT}  ((\mathcal{S}_{H_{\bT}} \circ \mathcal{S}_{H})(k_{(2)}))) \\
& =e_{\bT}(k_{(1)}) \,\otimes (h \preliediff_{\bT}  e_{\bT}(k_{(2)}))
\end{align*}
\end{definition}
Remark that whenever $h$ and $k$ are group-like elements (in a adequate completion of $H$), the following formula is in hold:
\begin{align*}
e_{\bT \, | \, \varepsilon}(k\otimes h) = (k^{-1})^{-1^{\star_{\bT}}}\otimes h \cdot_{\bT} (k^{-1})^{-1^{\star_{\bT}}}.
\end{align*}
The same formula can be used to define the relative $T$-transform of a group $\mathcal{O}$-operator  in the setting of $\mathcal{O}$-group and crossed products of groups. More precisely, with $(G,\cdot,\Gamma)$ a right group-module, and $\bT\colon G \to \Gamma$ a $\mathcal{O}$- group operator, with the notations introduced in Definition, for any $(k,h)\in G \times G$ :
\begin{align}
    e_{\bT\,|\,\varepsilon}(k,h):=\big((k^{-1})^{-1^{\star_{\bT}}}, h \cdot_{\bT} (k^{-1})^{-1^{\star_{\bT}}}\big) \in G_\bT \times G.
\end{align}
Recall the definition of the product in $G_\bT \ltimes_{\cdot_\bT} G$ : 
\begin{align}
\label{eqn:producttwistedgp}
(k_1 \ltimes h_1)(k_2 \ltimes h_2) = k_1\star_\bT k_2 \ltimes ((h_1\cdot_\bT k_2)h_2 ),\quad k_1,k_2 \in G_{\bT},\quad h_1,h_2 \in G.
\end{align}
Notice that $\mathcal{S}_H \otimes \mathcal{S}_H = \mathcal{S}_{H_\varepsilon \ltimes_{\preliediff_{\varepsilon}}} H$. If $\bS$ and $\bT$ are two Hopf $\mathcal{O}$-operaors, observe that we can consider as well the composition between the antipodes of the Hopf algebras $H_{\bT} {\ltimes}_{\preliediff_{\bT}} H$ and $H_{\bS} {\ltimes}_{\preliediff_{\bS}} H$ since both are supported on the tensor product $H\otimes H$ and define 
\begin{align}
e_{\bT | \bS}(k\otimes h) &:= \mathcal{S}_{H_{\bT} {\ltimes}_{\preliediff_{\bT}} H} \circ \mathcal{S}_{H_{\bS} {\ltimes}_{\preliediff_{\bS}} H^{op}} (k\otimes h) \nonumber \\ 
&= \mathcal{S}_{H_\bT} \circ \mathcal{S}_{H^{op}_\bS}(k_{(1)})\otimes h \preliediff ((\bS\circ \mathcal{S}_{H^{op}_\bS})\star (\bT \circ \mathcal{S}_{H_\bT} \circ \mathcal{S}_{H^{op}_\bS}))(k_{(2)})
\end{align}
where we recall that $\star$ is the convolution product between linear functional from $H$ to $K$.
The following proposition states that in cases of interest, we can express this more general relative, two arguments, $T$-transform by using the product $\#$ and the relative $T$-transform of Definition \ref{def:relativettransform}.
\begin{proposition} 
\label{prop:relativettransform}
Assume that $\bT$ and $\bS^{-1^{\#}}$ are compatible Hopf $\mathcal{O}$-operators then:
\begin{align*}
e_{\bT | \bS} = e_{\bS^{-1^{\#}} \# \bT \,|\,\varepsilon}.
\end{align*}
\end{proposition}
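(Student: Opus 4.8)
The plan is to evaluate both sides on a generic $k\otimes h\in H\otimes H$ using the explicit Sweedler expressions. Set $\mathbf{U}:=\bS^{-1^{\#}}\#\bT$; since $\bT$ and $\bS^{-1^{\#}}$ are compatible, Theorem \ref{thm:semenovtianshanskii} guarantees that $\mathbf{U}$ is again a $\mathcal{O}$-Hopf operator, so that $e_{\mathbf U}$, $\preliediff_{\mathbf U}$ and $H_{\mathbf U}$ are all meaningful and the single-argument transform of Definition \ref{def:relativettransform} gives $e_{\mathbf U\,|\,\varepsilon}(k\otimes h)=e_{\mathbf U}(k_{(1)})\otimes\big(h\preliediff_{\mathbf U}e_{\mathbf U}(k_{(2)})\big)=e_{\mathbf U}(k_{(1)})\otimes h\preliediff\big(\mathbf U\circ e_{\mathbf U}\big)(k_{(2)})$, whereas the left-hand side is $\mathcal{S}_{H_\bT}\!\circ\mathcal{S}_{H^{op}_{\bS}}(k_{(1)})\otimes h\preliediff\big[(\bS\circ\mathcal{S}_{H^{op}_\bS})\star(\bT\circ\mathcal{S}_{H_\bT}\circ\mathcal{S}_{H^{op}_\bS})\big](k_{(2)})$. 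Matching the two tensor components termwise, the statement reduces to two functional identities: (i) $\mathcal{S}_{H_\bT}\circ\mathcal{S}_{H^{op}_\bS}=e_{\mathbf U}$ on $H$, and (ii) $(\bS\circ\mathcal{S}_{H^{op}_\bS})\star(\bT\circ\mathcal{S}_{H_\bT}\circ\mathcal{S}_{H^{op}_\bS})=\mathbf U\circ e_{\mathbf U}$ as maps $H\to K$.

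For (i), I would first rewrite the two antipodes. By Theorem \ref{thm:tttransformtwoantipodes}, $\mathcal{S}_{H_\bT}=e_{\bT}\circ\mathcal{S}_H$; because $H_\bS$ is cocommutative its antipode is involutive, so $\mathcal{S}_{H^{op}_\bS}=\mathcal{S}_{H_\bS}^{-1}=\mathcal{S}_{H_\bS}=e_{\bS}\circ\mathcal{S}_H$. Using $\mathcal{S}_H\circ e_{\bS}\circ\mathcal{S}_H=e_{\bS}^{-1}$ (Theorem \ref{thm:inversion}), the composite becomes $\mathcal{S}_{H_\bT}\circ\mathcal{S}_{H^{op}_\bS}=e_{\bT}\circ e_{\bS}^{-1}$. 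On the other side, Theorem \ref{thm:compinverse} makes $e^{-1^{\circ}}$ a group morphism from $(G_{\#},\#)$ to $(G_{\circ},\circ)$, whence the anti-multiplicativity $e_{A\#B}=e_{B}\circ e_{A}$; combined with $\bS^{-1^{\#}}=\bS\circ\mathcal{S}_H$ (Proposition \ref{prop:inverseOhopfoperator}) and $e_{\bS\circ\mathcal{S}_H}=e_{\bS}^{-1}$ (Theorem \ref{thm:inversion}), this yields $e_{\mathbf U}=e_{\bT}\circ e_{\bS^{-1^{\#}}}=e_{\bT}\circ e_{\bS}^{-1}$, which matches (i).

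For (ii), the key computational observation is that the $\#$-product can be rewritten convolutionally: for coalgebra maps $A,B$ one has $A\#B=\big[A\circ(\mathrm{id}\preliediff\mathcal{S}_K\circ B)\big]\star B$, which follows from Definition-Proposition \ref{def:Hopfsmash} once $B$ is a coalgebra morphism. Taking $A=\bS^{-1^{\#}}=\bS\circ\mathcal{S}_H$ and $B=\bT$, and noting that $\mathrm{id}\preliediff(\mathcal{S}_K\circ\bT)=\mathrm{id}\preliediff\bT^{-1^{*}}=e_{\bT}^{-1}$ by Proposition \ref{prop:inversett}, I get $\mathbf U=\big[(\bS\circ\mathcal{S}_H)\circ e_{\bT}^{-1}\big]\star\bT$. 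Post-composing with the coalgebra morphism $e_{\mathbf U}$, so that $\circ\,e_{\mathbf U}$ distributes over $\star$, gives $\mathbf U\circ e_{\mathbf U}=\big[(\bS\circ\mathcal{S}_H)\circ e_{\bT}^{-1}\circ e_{\mathbf U}\big]\star\big[\bT\circ e_{\mathbf U}\big]$. The second convolution factor is $\bT\circ e_{\mathbf U}=\bT\circ\mathcal{S}_{H_\bT}\circ\mathcal{S}_{H^{op}_\bS}$ by (i); for the first, $e_{\bT}^{-1}\circ e_{\mathbf U}=e_{\bS}^{-1}$ and $\mathcal{S}_H\circ e_{\bS}^{-1}=e_{\bS}\circ\mathcal{S}_H=\mathcal{S}_{H^{op}_\bS}$, so it equals $\bS\circ\mathcal{S}_{H^{op}_\bS}$. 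Both factors coincide with the target convolution, establishing (ii).

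The main obstacle is the bookkeeping in (ii): one must recast $\#$ as a convolution and then correctly recognise $\mathrm{id}\preliediff\bT^{-1^{*}}$ as $e_{\bT}^{-1}$, and the whole reduction rests on knowing that $\mathbf U$ is genuinely a $\mathcal{O}$-Hopf operator so that $e_{\mathbf U}$ and $\preliediff_{\mathbf U}$ are defined — this is exactly where the compatibility hypothesis, through Theorem \ref{thm:semenovtianshanskii}, enters. Once these are in place the two tensor components agree termwise, and the residual manipulation is only routine Sweedler bookkeeping justified by cocommutativity and coassociativity.
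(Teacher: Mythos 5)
Your proof is correct and follows essentially the same route as the paper's: both reduce the claim to the two tensor-component identities, establish the first component via $\mathcal{S}_{H_{\bT}}=e_{\bT}\circ\mathcal{S}_{H}$ together with the anti-multiplicativity $e_{A\#B}=e_{B}\circ e_{A}$, and handle the second via the convolutional rewriting of $\#$ and the identity $\mathrm{id}\preliediff\bT^{-1^{*}}=e_{\bT}^{-1}$. The only difference is one of direction: you expand $\mathbf{U}\circ e_{\mathbf{U}}$ and match it factorwise to the target convolution, whereas the paper contracts the target convolution into $(\bT^{-1^{\#}}\#\bS)^{-1^{*}}$ and then unwinds it — a reorganization, not a different argument.
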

\begin{proof}
Recall that $\mathcal{S}_{H_\bT} = e_\bT \circ \mathcal{S}_H$ (see Theorem \ref{thm:tttransformtwoantipodes}). Then 
\begin{align*}
\mathcal{S}_{H_{\bT}} \circ \mathcal{S}_{H^{op}_{\bS}} = e_\bT \circ e_{\bS \circ \mathcal{S}}= e_{\bS\circ\mathcal{S} \# \bT}= e_{\bS^{-1^{\#}} \# \bT}  = \mathcal{S}_{\bS^{-1^\#} \# \bT} \circ \mathcal{S}_H = \mathcal{S}_{\bT \# \bS^{-1^\#}} \circ \mathcal{S}_H
\end{align*}
It follows that
\begin{align*}
(\bS\circ \mathcal{S}_{H^{op}_{\bS}})\star (\bT \circ \mathcal{S}_{H_\bT} \circ \mathcal{S}_{H_\bS}) 
&=(\mathcal{S}_H \circ \bS) \star (\mathcal{S}_H \circ \bT \circ \mathcal{S}_{H_\bS}) && {\rm eqn.\,\,}\eqref{def:Ohopfoperator}, \eqref{eqn:productohopfoperator}\\
&= \mathcal{S}_H \circ ((\bT \circ \mathcal{S}_{H_\bS}) \star \bS )\\
&=(\bT^{-1^\#} \# \bS )^{-1^\star}\\
&=(\bT^{-1^\#} \# \bS)  \circ \mathcal{S}_{\bT^{-1^\#} \# \bS} && {\rm Theorem\,\,} \ref{thm:semenovtianshanskii}\\
&=(\bT \# \bS^{-1^{\#}})  \circ \mathcal{S}_H \circ \mathcal{S}_{\bT^{-1^\#} \# \bS} && {\rm Proposition \,\,} \ref{prop:inverseOhopfoperator}\\
&=(\bT \# \bS^{-1^{\#}}) \circ e_{\bT \# \bS^{-1^{\#}}} && {\rm Theorem \,\, \ref{thm:tttransformtwoantipodes}}
\end{align*}
\end{proof}
We specialize this construction to the setting exposed at the beginning of the section. Pick $T,T^c \in G$ two formal series of multilinear functionals on the algebra $B$ starting with $1$, then (with $I={\rm id}_B$)
\begin{equation}
\label{eqn:ttransformlambda}
e_{\bm{\lambda}\,|\,I\varepsilon}(T,T^c) = (e_{\bm{\lambda}}(T), T^c \cdot_{{\bm \lambda}} e_{\bm{\lambda}}(T)) = (e_{\bm{\lambda}}(T), T^c\cdot (I\centerdot e_{\lambda}(T)).
\end{equation}
Recall that we set $K_a$, respectively $K_a^c$, the formal series of the free cumulants of $x$, respectively, the formal series of the conditionally free cumulants of $x$
\begin{align}
\label{eqn:cumulantsmoments}
(C_x^{\psi})^{-1^{\circ}} = I \centerdot(1+ K_a \centerdot I)^{-1}, \quad (C_x^{\varphi})^{-1^{\circ}} = I \centerdot (1+ K^c_x\circ (C_x^{\psi} \circ  (C_x^{\varphi})^{-1^{\circ}} \centerdot I))^{-1}
\end{align}
where the multilinear series $C^{\psi}_x$ and $C^{\varphi}_x$ are the operator-valued momen$T$-transforms of $x$ \cite{dykema2006stransform}: 
\begin{align}
C_x^{\varphi}=\sum_{n\geq 0} \varphi(b(ab)^n),\quad C_x^{\psi}=\sum_{n\geq 0} \psi(b(ab)^n).
\end{align}
(We have noted  $\varphi(b(ab)^n)$ for the multilinear function $(b_0,\ldots,b_n)\mapsto \varphi(b_0ab_1\cdots ab_n)$ ). In our notations and with the multilinear series $\tilde{C}^{\psi}_x$ (starting with $1$) defined through the relation ${C}^{\psi}_x = I\centerdot \tilde{C}^{\psi}_x$, 
\begin{align}
\label{eqn:momentdoubleinversion}
\tilde{C}^{\psi}_x = ((1+ K_a \centerdot I)^{-1})^{-1^{\star_{\bm{\lambda}}}}
\end{align}
and $\tilde{C}_x^{\psi}$ is the solution to the fixed point equation (the moments-cumulants relations)
\begin{align}
\label{eqn:dependencehc}
\tilde{C}^{\psi}_x = (1+ K_a \centerdot I) \cdot_{\bm{\lambda}}\tilde{C}^{\psi}_x
\end{align}
We note that, with $C_x^{\psi}=I+I\centerdot \bar{C}_x^{\psi} \centerdot I$:
$$
H_a = \bar{C}_x^{\psi} (1+I\centerdot \bar{C}_x^{\psi})^{-1}
$$
In fact, the defining relation: $
I = C_x^{\psi} \centerdot (1+(K_a \circ C_x^{\psi}) \centerdot C_x^{\psi})^{-1^{\centerdot}} $ implies, since the left translation by $I$ is injective :
$
\bar{C}^{\psi}_x \centerdot I=(K_a \circ C^{\psi}_x) \centerdot C_x^{\psi}, 
$
and for the same reason, but applied to the right translation by $I$, 
$ 
\bar{C}^{\psi}_x = (K_a \circ C^{\psi}_x)\centerdot (1+ I\centerdot \bar{C}^{\psi}_x)
$
It is easily seen that this last fixed point equation is equivalent to the fixed point equation \eqref{eqn:formulahx} but for $(\bar{C}_x^{\psi} (1+I\centerdot \bar{C}_x^{\psi})^{-1}$ and the formula \eqref{eqn:momentdoubleinversion} is proved.

The purpose of the following two Theorems is to demonstrate the utility of our formalism in defining, first, the \emph{operator-valued conditionally free $T -$ transform}, which may be used to compute the operator-valued conditionally free multiplicative convolution and also the conditional $H$-transform, which is used in computing the conditionally monotone multiplicative convolution of two non-commutative distributions. 
Given two formal series $(H,K) \in \CC_0\llbracket {\rm End} B \rrbracket \times \CC_0\llbracket {\rm End B} \rrbracket$, we will use $(H,K)^{-1^{\ltimes_{\bm{\lambda}}}}$ for the inverse of $(H,K)$ with respect to the product of $ \CC_0\llbracket {\rm End} B \rrbracket _{\bm{\lambda}}\ltimes_{\cdot_{\bm{\lambda}}}\CC_0\llbracket {\rm End} B \rrbracket$, see \eqref{eqn:producttwistedgp}.

\begin{theorem}  
\label{thm:factorisationconditionalttransform}
Pick $a,b$ two conditionally free random variables in an algebraic conditional probability space $(\mathcal{A},\varphi,\psi)$. Additionally, we assume that $\psi(a)=\psi(b)=1=\varphi(a)=\varphi(b)$.
For $x\in \{a,b\}$, we define the multilinear series $T_x \in \CC_0\llbracket {\rm End} B \rrbracket$ and $T_x^c\in \CC_0\llbracket {\rm End} B \rrbracket$ by (see \eqref{eqn:cumulantsmoments})
\begin{align}
\label{eqn:conditionalttransform}
(K_a, K^c_x) := e_{\bm{\lambda}\,|\,\varepsilon}(T_x,T_x^c) = ((T_x,T^c_x)^{-1^{\centerdot}})^{-1^{\ltimes_{\bm{\lambda}}}},
\end{align}
then the following factorisation holds 
\begin{equation}
(T_{ab},T_{ab}^c) = (T_{a},T_{a}^c)\ltimes_{\bm{\lambda}^{-1^\#} \# \bm{\rho} }(T_{b},T_{b}^c)=(T_a\star_{\bm{\lambda}^{-1^\#} \# \bm{\rho}}T_a, (T^c_a \cdot_{\bm{\lambda}^{-1^\#} \# \bm{\rho}} T_b)\centerdot T^c_b) .
\end{equation}
\end{theorem}
\begin{remark}
We call the pair $(T_x,T^c_x)$ the conditional operator-valued $T$-transform of $x$.  We made the restrictive assumptions $\psi(x)=1=\varphi(x)$. These can be relaxed to $\psi(x)=\alpha$ and $\varphi(x)=\beta$ invertible elements of $B$. Instead of considering multilinear functions series starting with 1, we allow these series to start (to have degree zero component) with any invertible element of $B$, we call the set of such series $\mathbb{C}_{\times}\llbracket \textrm{End} B\rrbracket$. Likewise, the set of series with no degree zero component but starting with a degree one component equal to $\alpha I$ for some invertible $\alpha$ in $B$ is called $\Gamma_\times$. The left and right translations by the identity $I$ from $\mathbb{C}_{\times}\llbracket \textrm{End} B\rrbracket$ to $\Gamma_\times$ are compatible $\mathcal{O}$-operators and the equation \eqref{eqn:conditionalttransform} define the conditional operator-valued $T$-transform under these weaker assumptions. 
\end{remark}
The proof of Theorem \ref{thm:factorisationconditionalttransform} is based on the following Lemma, the operator-valued version of the Lemma 2.2 in \cite{popa2015multiplication}
\begin{lemma} 
\label{lemma:factorisation}
Let $a,b$ be two conditionally free random variables with $\varphi(a)=\varphi(b)=1=\psi(a)=\psi(b) $and define the multilinear series $K_{a \dashv b}$, $K_{a \vdash b}$ 
\begin{align}
	\label{eqn:equationdeuxun}
	&K_b \cdot_{\bm{\lambda}} (K_a \cdot_{\bm{\rho}} K_{a \vdash b}) = K_{a \vdash b}, \quad K_a  \cdot_{\bm{\rho}} (K_b \cdot_{\bm{\lambda}} K_{a \dashv b}) = K_{a \dashv b}
\end{align}
then:
\begin{align}
\label{eqn:condcumulantsmult}
&K^c_{ab} =  (K^c_{a} \cdot_{\bm{\rho}} K_{a \vdash b}) \centerdot (K^c_{b} \cdot_{\bm{\lambda}}K_{a \dashv b}), \\ 
&K_{ab} = K_b \star_{\bm{\lambda}} K_{a \dashv b}= K_a \star_{\bm{\rho}} 
K_{a \vdash b}	         \label{eqn:freecumulantsmult}
\end{align} 
\end{lemma}
\begin{proof}
Notice that \eqref{eqn:condcumulantsmult} implies \eqref{eqn:freecumulantsmult} when $\varphi = \psi$. It is enough to prove \eqref{eqn:condcumulantsmult}.
We use the notations introduced in the proof of Lemma 2.2 in \cite{popa2015multiplication}.Additionally, we set $[2n]=\{1<\hat{1}<2<\hat{2}<\cdots n < \hat{n}\}=[n]\cup \widehat{[n]}$. First, we prove
\begin{align}
\label{eqn:krewerasun}
\kappa^c_n(a_1b_1,\ldots, a_nb_n)=\sum_{\pi \in {\rm NC}_0(2n)}\mathcal{K}^c(a_1,b_1,\ldots,a_n,b_n)
\end{align}
or equivalently
\begin{align}
\label{eqn:krewerasdeux}
\mathcal{K}^c_{\pi}(a_1b_1,\ldots,a_nb_n) = \sum_{\substack{\sigma \in {\rm NC}_S(2n)\\ \sigma \vee \hat{0}_n = \hat{\pi} }} \mathcal{K}^c_{\sigma}(a_1,b_1,\ldots,a_n,b_n)
\end{align}
with $\mathcal{K}^c_{\pi}$ the partitioned conditional free cumulants, with $\pi_1,\ldots,\pi_n$ the irreductible components of the partition $\pi$ and $a_1,\ldots,a_n$ random variables
\begin{align*}
&\mathcal{K}_{\pi}(a_1,\ldots,a_n) = K_{\pi_1}(a_{\pi_1})\cdots K_{\pi_n}(a_{\pi_1}) \\
&\mathcal{K}_{\pi_j}(a_1,\ldots,a_n) = \kappa_{|V_j|}(a_{v^j_1}\mathcal{K}_{\pi_j^1}(a_{\pi_j^1}),\ldots,\mathcal{K}_{\pi_j^{p-1}}(a_{\pi_j^{p-1}})a_{v^j_p})
\end{align*}
\begin{align*}
&\mathcal{K}^c_{\pi}(a_1,\ldots,a_n)   = K^c_{\pi_1}(a_{\pi_1})\cdots K^c_{\pi_n}(a_{\pi_1}) \\
&\mathcal{K}^c_{\pi_j}(a_1,\ldots,a_n) = \kappa^c_{|V_j|}(a_{v^j_1}\mathcal{K}_{\pi_j^1}(a_{\pi_j^1}),\ldots,\mathcal{K}_{\pi_j^{p-1}}(a_{\pi_j^{p-1}})a_{v^j_p})
\end{align*}
where $V_j$ is the outer block of $\pi_j$, $V_j=\{v^j_1,\ldots,v^j_p\}$ and the $\pi^k_j$ is the restriction of $\pi_j$ to $\{v^j_k+1, v^j_{k+1}-1\}$ and $(\kappa^c_n)_{n\geq 1}$ is the sequence of operator valued conditionally free cumulants,
\begin{align*}
K^c_a = 1+ \sum_{n\geq 1} K^c_a {}^{(n)},\quad K^c_a{}^{(n)}(x_1,\ldots,x_n)=\kappa^c_{n+1}(ax_1,\ldots, x_na)
\end{align*}
The same inductive arguments as in the proof of Lemma 2.2 \cite{popa2015multiplication} yields equation \eqref{eqn:krewerasdeux}. 
Before continuing with the proof, we remark that a partition $\alpha=(\alpha_+,Kr(\alpha_+)) \in \mathrm{NC}_0(n)$ has the following restriction property, whenever $O=\{o_1 < \cdots < o_k\} \subset [n]$ (or $\widehat{O}=\{\widehat{o}_1 < \cdots < \widehat{o}_k\} \subset \widehat{[n]}$) is an outer block of $\alpha$, the restriction of $\alpha$ to an interval $I=\rrbracket o_i,o_{i+1}\llbracket$ is a non-crossing partition, that we will call $\alpha^{\prime}$ such that $\{\{ \delta \}\} \cup \alpha^{\prime} $ is a non-crossing partition in ${\rm NC}_0(\{\delta \}\cup I)$ with the convention that $i_1 = \hat{\delta}$, $\delta < i_1 < i_2 < \hat{i}_2 < \cdots i_k < \hat{i}+k$ ( if instead $O \subset \widehat{[n]}$, we add $\delta$ with the convention that $\delta$ is greater than the elements in $I$).
Now since $\pi = (\pi_+, Kr(\pi_+))$ has two outer block, the one containing $1$ and the other containing $n$, one gets
\begin{align}
\label{eqn:unun}
&\kappa^c_n((ab) x_1,\ldots, x_{n-1}(ab)) \\
&= \sum_{p=1}^n \sum_{\substack{I, J}} \sum_{\alpha,\beta} \kappa^c_{p}(a\mathcal{K}_{\alpha_1}((1,b,a,b,\ldots,a,b)(1,x_{I_1 \cap{\widehat{[n]}}})),\ldots,a\mathcal{K}_{\alpha_{p-1}}((1,b,a,b,\ldots,a,b)(1,x_{I_{p-1}\cap{\widehat{[n]}}})),a) \nonumber\\
&\hspace{1cm}\kappa^c_{n-p}(b,\mathcal{K}_{\beta_1}((x_{J_1\cap [n]},1)(a, b,\ldots,a, b,a,1)b,\ldots,\mathcal{K}_{\beta_{n-p-1}}((x_{J_{n-p-1}\cap [n]},1)(a,b,\ldots,a,1))b)\nonumber
\end{align}
where the second and third sums ranges overs non intersecting intervals of integers $I_1,\ldots,I_p$ and $J_1 < \cdots <J_{n-q}$ with $I_1 <\cdots I_{p-1} < J_1 < \cdots < J_{n-p-1}$ of $\{1,\ldots,\hat{n}\}$ such that $I_1,\ldots,I_p \cup J_1 \cdots < J_{n-q}$ covers $\{1,\ldots,2n\}$ but a subset of $n$ points.
The partitions $\alpha$ are non-crossing partitions in ${\rm NC}_0(\{\delta \}\cup\{ I \})$ and the partitions $\beta$ are irreducible non-crossing partitions in ${\rm NC}_0(J \cup \{\delta \})$. 

Notice that $\mathcal{K}_{\alpha}((1,b,a,b,\ldots,a,b)(1,x_{I \cap \widehat{[n]}}))$ is non-zero only when $\{\delta\}\in \alpha$ ($\{ \delta\}\in\alpha_+$ which implies that $Kr(\alpha_+)$ is an irreducible partition $\{i_1,\hat{i}_2,\hat{i}_3,\ldots,\hat{i}_k\}$ with if $\delta \cup I=\{\delta < i_1 < i_2 < \hat{i}_2\cdots < i_k < \hat{i}_k \}$
). Moreover, for any $x=(x_1,\ldots,x_k)$
\begin{align} \label{eqn:undeux}
\kappa_{k}((b,ab,\ldots,ab)(1,x))&=\sum_{\alpha}\mathcal{K}_{\alpha}((1,b,a,b,\ldots,a,b)(1,x)) \\
&= \sum_{p,s}\kappa(b
\kappa_{s_1}((x_{I_1},1)(ab,\ldots,ab,a)), \ldots, \kappa_{s_p}((x_{I_p},1)(ab,\ldots,ab,a))b) \label{eqn:undeuxprime}
\end{align}
where $I_j=\llbracket s_1+\cdots s_j,s_1+\cdots s_{j+1}\rrbracket$ and $p\geq 2$.
The same reasoning yields 
\begin{align}
\kappa_{k}((x,1)(ab,\ldots,ab,a))&=\sum_{\alpha}\mathcal{K}_{\alpha}((x,1)(ab,\ldots,ab,a)) \label{eqn:untrois}\\
&= \sum_{p,s}\kappa(a
\kappa_{s_1}((x_{J_1},1)(b,\ldots,ab)), \ldots, \kappa_{s_p}((x_{J_p},1)(b,ab,\ldots,ab))a) \label{eqn:untroisprime}
\end{align}
Together with the equation inferred from \eqref{eqn:unun} and \eqref{eqn:undeux}, \eqref{eqn:untrois}:
\begin{align}
&\kappa^c_n((ab) x_1,\ldots, x_{n-1}(ab)) \nonumber\\
&= \sum_{p}\sum_{s,t} \kappa^c_{p}(a\kappa_{s_1}((1,b,a,b,\ldots,a,b)(1,x_{I_1 \cap{\widehat{[n]}}})),\ldots,a\kappa_{s_p}((1,b,a,b,\ldots,a,b)(1,x_{I_{p-1}\cap{\widehat{[n]}}})),a) \\
&\hspace{1cm}\kappa^c_{n-p}(b,\kappa_{t_1}((x_{J_1},1)(a, b,\ldots,a, b,a,1)b,\ldots,\kappa_{t_{n-p-1}}((x_{J_{n-p-1}},1)(a,b,\ldots,a,1))b) \nonumber
\end{align}
The results now follow by noticing that \eqref{eqn:undeuxprime} and \eqref{eqn:untroisprime} the generating series of the $$(x_1,\ldots,x_k) \mapsto \kappa_{k}((ab,x_1\ldots,ab x_{k-1},a))$$ and $$(x_1,\ldots,x_k) \mapsto\kappa_{k}((b, x_1ab,\ldots,x_k ab))$$ satisfy the fixed point equations in \eqref{eqn:equationdeuxun} respectively. 
 \end{proof}
\begin{remark} Notice that $K_a \cdot_{\bm{\rho}} K_{a \vdash b} = K_{a \dashv b}$ and $K_b \cdot_{\bm{\lambda}} K_{a \dashv b}	 = K_{a \vdash b}	 $. Hence \eqref{eqn:condcumulantsmult} is equivalent to 
\begin{align}
K^c_{ab} = (K^c_{a} \cdot_{\bm{\rho}} K_{a \vdash b}) \centerdot (K^c_{b} \cdot_{\bm{\lambda}}(K_a \cdot_{\bm{\rho}} K_{a \vdash b} )) =  (K^c_{a} \cdot_{\bm{\rho}} (K_b \cdot_{\bm{\lambda}} K_{a \dashv b})) \centerdot (K^c_{b} \cdot_{\bm{\lambda}}K_{a \dashv b}). 
\end{align}
It is also possible to write the equations \eqref{eqn:equationdeuxun} and \eqref{eqn:condcumulantsmult} by using the product $\ltimes_{\rho}$ and the diagonal group action $\cdot_{\bm{\lambda}}$ of $\mathbb{C}_{0}[{\rm End} B]_{\bm{\lambda}}$ on  $\mathbb{C}_{0}[{\rm End} B]\times \mathbb{C}_{0}[End B]$.
Notice first that $K_{ab} = K_a \star_{\bm{\rho}}(K_b \cdot_{\bm{\lambda}} K_{a \dashv b})$. Then, we claim that 
\begin{align}
(K_{ab}, K^c_{ab}) = (K_a,K^c_a)\ltimes_{\bm{\rho}}((K_b,K_b^c)\cdot_{\bm{\lambda}}K_{a \dashv b})
\end{align}
\end{remark}
\begin{proof}[Proof of Theorem \ref{thm:factorisationconditionalttransform}]
Let us first notice the following relations
$$
K_{a \dashv b} = K_a \cdot_{\bm{\rho}}(T_b \cdot_{\bm{\lambda}} K_{ab}) = T^{-1^{\centerdot}}_b \star_{\bm{\lambda}} K_{ab}
$$
The proof of the factorisation of the conditional $T$-transform is a straightforward computations based on Lemma \ref{lemma:factorisation}. We insert first the definitions of the various products to obtain
\begin{align*}
K^c_{ab} &= (K^c_{a} \cdot_{\bm{\rho}} K_{a \vdash b}) \centerdot (K^c_{b} \cdot_{\bm{\lambda}}(K_a \cdot_{\bm{\rho}} K_{a \vdash b} ))\\
	       &= ((T^c_a \cdot_{\bm{\lambda}} K_a)\cdot_{\bm{\rho}} K_{a \vdash b})\centerdot (T^c_b \cdot_{\bm{\lambda}} (K_b \star_{\bm{\lambda}} K_{a \dashv b})) \\
	       &= ((T^c_a \cdot_{\bm{\lambda}} K_a)\cdot_{\bm{\rho}} K_{a \vdash b})\centerdot (T^c_b \cdot_{\bm{\lambda}} K_{ab}) \\
	       &=((T^c_a \cdot_{\bm{\lambda}} K_a)\cdot_{\bm{\rho}} (K_b \cdot_{\bm{\lambda}} K_{a \dashv b}))\centerdot (T^c_b \cdot_{\bm{\lambda}} K_{ab})\\
	       &= (T^c_a \cdot (\bm{\lambda}(K_a)\circ \bm{\rho}(K_b \cdot_{\bm{\lambda}} K_{a \dashv b}) ))\centerdot  (T^c_b \cdot_{\bm{\lambda}} K_{ab}) \\
	       &= (T^c_a \cdot (\bm{\lambda}(K_a)\circ \bm{\rho}(T_b \cdot_{\bm{\lambda}} (K_b\star_{\bm{\lambda}}K_{a \dashv b}) ))\centerdot  (T^c_b\cdot_{\bm{\lambda}} K_{ab}) \\
	       &=(T^c_a \cdot (\bm{\lambda}(K_a)\circ \bm{\rho}(T_b \cdot_{\bm{\lambda}} K_{ab} ))\centerdot  (T^c_b\cdot_{\bm{\lambda}} K_{ab}).
\end{align*}
We then use the fact that $(\bm{\rho},\bm{\lambda}^{-1^{\#}})$ is a matching $\mathcal{O}$-operator to infer 
\begin{align*}
\bm{\lambda}(K_a)\circ \bm{\rho}(T_b \cdot_{\bm{\lambda}} K_{ab} )& = \bm{\lambda}((K_a \cdot_{\rho} (T_b \cdot_{\bm{\lambda}} K_{ab}) ) \cdot_{\rho} (T_b \cdot_{\bm{\lambda}} K_{ab})^{-1^{\star_{\bm{\rho}}}} )\circ \bm{\rho}(T_b \cdot_{\bm{\lambda}} K_{ab} ) \\
& = \bm{\rho}((T_b \cdot_{\bm{\lambda}} K_{ab}) \cdot_{\bm{\lambda}}(K_a \cdot_{\rho} (T_b \cdot_{\bm{\lambda}} K_{ab}))^{-1^{\star_{\bm{\lambda}}}}) \circ \bm{\lambda}(K_a \cdot_{\rho} (T_b \cdot_{\bm{\lambda}} K_{ab})) && \textrm{see eqn.\,\,}\eqref{eqn:matchinggroup} \\
& = \bm{\rho}((T_b \cdot_{\bm{\lambda}} K_{ab}) \cdot_{\bm{\lambda}}(T_b^{-1^{\centerdot}}\star_{\bm{\lambda}}K_{ab})^{-1^{\star_{\bm{\lambda}}}}) \circ \bm{\lambda}(T_b^{-1^{\centerdot}}\star_{\bm{\lambda}}K_{ab}) \\
& = \bm{\rho}(T_b\cdot_{\bm{\lambda}} (T_b^{-1^{\centerdot}})^{-1^{\star_{\bm{\lambda}}}}) \circ \bm{\lambda}(T_b^{-1^{\centerdot}}\star_{\bm{\lambda}}
K_{ab}) \\
&=  \bm{\rho}^{-1^{\#}}(T_b^{-1^{\centerdot}} \cdot_{\bm{\lambda}^{-1^{\#}}} T_b^{-1^{\star_{\bm{\lambda}^{-1^{\#}}}}}) \circ \bm{\lambda}^{-1^{\#}}(T_b) \circ \bm{\lambda}(K_{ab}).
\end{align*}
where the last equality follows the fact that $(T_b^{-1^{\centerdot}})^{-1^{\star_{\bm{\lambda}}}} = (T_b^{-1^{\star_{\bm{\lambda}^{-1^{\#}}}}})^{-1^{\centerdot}}$.
Since 
$$T_b^{-1^{\centerdot}} \cdot_{\bm{\lambda}^{-1^{\#}}} T_b^{-1^{\star_{\bm{\lambda}^{-1^{\#}}}}} =  T_b^{-1^{\star_{\bm{\lambda}^{-1^{\#}}}}}
$$ we infer 
\begin{align*}
K^c_{ab} &= (T^c_a \cdot (\bm{\rho}^{-1^{\#}}(T_b^{-1^{\star_{\bm{\lambda}^{-1^{\#}}}}})  \circ \bm{\lambda}^{-1^{\#}}(T_b)) \cdot_{\bm{\lambda}} K_{ab})\centerdot (T^c_b\cdot_{\bm{\lambda}} K_{ab})\\
	       &= ((T^c_a \cdot (\bm{\rho}\#\bm{\lambda}^{-1^{\#}})(T_b) \centerdot T^c_b ) \cdot_{\bm{\lambda}} K_{ab}
\end{align*}
and the result follows from Theorem \ref{thm:semenovtianshanskii} and the formulae recalled at the beginning of the Section.
 \end{proof}
 \subsubsection{Conditional $H$-transform}
To state the next theorem, we set $E = \frac{I}{1-I} \in \Gamma$.
\begin{theorem} 
\label{thm:conditionalhtransform}
Define the two formal series $H_x$ and $H_x^c$ in $\mathbb{C}_0[End B]$ through :
$$(H_x, H^c_x) := e_{\bm{\lambda}\,|\,\varepsilon}(T_x\cdot E,T_x^c\cdot E) = ((T_x\cdot E,T^c_x\cdot E)^{-1^{\centerdot}})^{-1^{\ltimes_{\bm{\lambda}}}} = (H_x, (T^c_x \cdot E) \cdot_{\lambda} H_x ).
$$
for $x$ a self-adjoint element of $\mathcal{A}$ with $\phi(x)=\psi(x)=1$.
Pick $a,b$ two conditionally monotone random variables with $\phi(a)=\psi(a)=1=\phi(b)=\psi(b)$, then:
\begin{align}
(H_{ba},H_{ba}^c) = (H_{a},H_{a}^c)\ltimes_{\bm{\lambda}}(H_{b},H_{b}^c) .
\end{align}
\end{theorem}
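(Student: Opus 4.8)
The plan is to derive this conditionally monotone factorization from its conditionally free counterpart, Theorem~\ref{thm:factorizationconditionalttransform}, rather than re-running the cumulant combinatorics from scratch. I would start from the realization recalled at the end of Section~\ref{sec:condindependence}: given $a\prec b$ conditionally monotone, there are conditionally free $\tilde a,\tilde b$ in an auxiliary conditional space with $\varphi_{a,b}=\varphi_{\tilde a,\tilde b}$ and with the $\psi$-marginal of one of the two trivial (equal to evaluation at the unit), the other matching $\psi$. Because $H_x$ depends only on the $\psi$-distribution of $x$ and $H_x^c$ only on its $\varphi$-distribution, and because the $\varphi$- and $\psi$-data of the product $ba$ are exactly those produced by this realization, it suffices to transport the conditionally free $T$-factorization through the change of transform.

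The first coordinate requires nothing new: conditional monotonicity is monotonicity with respect to $\psi$, so the identity $H_{ba}=H_a\star_{\bm{\lambda}}H_b$ is precisely the multiplicative monotone relation \eqref{eqn:htransform} at the $\psi$-level, and it coincides with the first coordinate of the crossed product $(H_a,H_a^c)\ltimes_{\bm{\lambda}}(H_b,H_b^c)$ read off from \eqref{eqn:producttwistedgp}. All the content is therefore in the second coordinate $H_{ba}^c=(H_a^c\cdot_{\bm{\lambda}}H_b)\centerdot H_b^c$.

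For the second coordinate I would use that, by definition, the conditional $H$-transform is the relative transform $e_{\bm{\lambda}\,|\,\varepsilon}$ evaluated on the $E$-shifted data $(T_x\cdot E,\,T_x^c\cdot E)$, whereas the conditional $T$-transform uses $(T_x,T_x^c)$. The key device is the equivariance of right multiplication by $E=\tfrac{I}{1-I}$ with respect to the twisted action $\cdot_{\bm{\lambda}^{-1^{\#}}\#\bm{\rho}}$ recorded just before Section~\ref{sec:condindependence}. Concretely, I would take the conditionally free factorization $(T_{ab},T_{ab}^c)=(T_a,T_a^c)\ltimes_{\bm{\lambda}^{-1^{\#}}\#\bm{\rho}}(T_b,T_b^c)$, multiply through by $E$, and push $E$ inside the product using this equivariance. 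The decisive simplification is the trivial-$\psi$ condition: it forces $K=\mathbf 1$, hence $T=\mathbf 1$ and $\Sigma=E$, for the corresponding variable, so the $\bm{\rho}$-twist in $\bm{\lambda}^{-1^{\#}}\#\bm{\rho}$ drops out and the product collapses to the left-translation product $\bm{\lambda}$ governing monotone convolution. Matching second coordinates then yields exactly $H_{ba}^c=(H_a^c\cdot_{\bm{\lambda}}H_b)\centerdot H_b^c$.

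The hard part will be establishing this collapse rigorously — that with a trivial $\psi$-marginal the twisted crossed-product law $\ltimes_{\bm{\lambda}^{-1^{\#}}\#\bm{\rho}}$ becomes $\ltimes_{\bm{\lambda}}$ after the $E$-shift — since one must check the $E$-equivariance is compatible with both the group product $\star$ on the first factor and the module action $\cdot$ on the second, and confirm that the realization identifies $H_{ba}^c$ with the conditional $H$-transform of the genuine product $ba$ (and not merely of the free model), keeping careful track of the order $ab$ versus $ba$. A more self-contained alternative would bypass the realization and instead prove a monotone analogue of Lemma~\ref{lemma:factorization}, expressing $H_{ba}^c$ through the fixed-point equations \eqref{eqn:formulahx} and \eqref{eqn:subordinationfixedpointeqn} and then translating to transforms exactly as in the proof of Theorem~\ref{thm:factorizationconditionalttransform}; there the principal burden would be the partition bookkeeping (interval/monotone partitions in place of the noncrossing ones of \eqref{eqn:krewerasdeux}).
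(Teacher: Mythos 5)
Your proposal follows essentially the same route as the paper's proof: the paper likewise passes to the conditionally free realization $\tilde a,\tilde b$ (with $\tilde a$ having trivial $\psi$-marginal), applies the conditionally free factorization of Theorem~\ref{thm:factorizationconditionalttransform} to $\tilde b\tilde a$, uses the $E$-shift together with the triviality $T_{\tilde a}=1$ to collapse the twisted product to the $\bm{\lambda}$-product, and then identifies $H_{\tilde b\tilde a}=H_{\tilde b}=H_b$ and $H^c_{\tilde b\tilde a}=H^c_{ba}$ via the dependence of $H$ on the $\psi$-moments and of $H^c$ on the $\varphi$-moments (Remark~\ref{rk:dependencehtransform} and equations \eqref{eqn:dependencehc}, \eqref{eqn:momentshc}). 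The points you flag as delicate (the $E$-equivariance and the order of the product) are exactly the ones the paper handles, so your plan is sound and matches the published argument.
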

\begin{remark} Notice that
\label{rk:dependencehtransform}
\begin{align*}
K^c_x \cdot ((1-I\centerdot H_x)^{-1}\centerdot I) &= T^c_x \circ (I\centerdot K) \circ ((1-I\centerdot H_x)^{-1}\centerdot I)\\
&=T^c_x \circ ((1-I\centerdot H_x)^{-1})\centerdot I \centerdot (K_x\circ (I\centerdot (1-I\centerdot H_x)^{-1})) \\
&=T^c_x \circ ((1-I\centerdot H_x)^{-1}) \centerdot (I\centerdot H_x))\\
&=H^c_x.
\end{align*}
Similar computations as done right before Theorem \ref{thm:factorisationconditionalttransform} yield
\begin{align}
\label{eqn:momentshc}
H^c_x = \bar{C}_x^{\varphi}(1+I\centerdot \bar{C}_x^{\varphi})
\end{align}
\end{remark}
\begin{proof} We let $\tilde{a}$ and $\tilde{b}$ be two conditionally free random variables in a conditional probability space $(\mathcal{A},\psi,\varphi,B)$ as in Theorem \ref{thm:conditionalhtransform}. Since $\tilde{a}$ and $\tilde{b}$ are conditionally free,
$$
T^{c}_{\tilde{b}\tilde{a}} = (T^c_{\tilde{b}}\cdot_{\bm{\lambda}^{-1^{\#}}\#{\bm \rho}} T_{\tilde{a}})\centerdot T^{c}_{\tilde{a}}
$$
Thus we infer
\begin{align}
    (T^c_{\tilde{b}\tilde{a}} \cdot E) \cdot_{\bm{\lambda}} H_{\tilde{b}} &=((T^c_{\tilde{b}} \cdot E)\cdot_{\bm{\lambda}} H_{\tilde{b}})\centerdot ((T^c_{\tilde{a}} \cdot E) \cdot_{\bm{\lambda}}H_{\tilde{b}}))
\end{align}
We note that $H_{\tilde{b}\tilde{a}}=H_{\tilde{b}}$ since $\tilde{a}$ is distributed as $1\in\mathcal{A}$ (see equation \eqref{eqn:dependencehc} with respect to $\psi$ and thus from the previous equation we infer that 
\begin{align}
H^c_{\tilde{b}\tilde{a}} = H^c_{\tilde{b}} \centerdot ((T^c_{\tilde{a}} \cdot E) \cdot_{\bm{\lambda}}H_{\tilde{b}})) = H^c_{b} \centerdot ((T^c_{\tilde{a}} \cdot E)\cdot_{\bm{\lambda}} H_{{b}})
\end{align}
where the second equality follows from the fact that $\tilde{b}$ is distributed as $b$ with respect to $\psi$ and $\varphi$. Following Remark \ref{rk:dependencehtransform} and equation \eqref{eqn:momentshc}, $H^c_{\tilde{b}\tilde{a}}$ does only depend upon the moments of $ \tilde{b}\tilde{a}$ under $\varphi$, which are equal to the moments of $ba$ and $H^c_{\tilde{b}\tilde{a}}=H^c_{{b}{a}}$. Also, $H_{\tilde{a}}=1$ implies
$$
H^c_{{b}{a}} = H^c_{\tilde{b}\tilde{a}} = H^c_{b} \centerdot (H^c_{\tilde{a}}\cdot_{\bm{\lambda}} H_{{b}})
$$ 
Now since $\tilde{a}$ is distributed as $a$ under $\varphi$, equation \eqref{eqn:momentshc} implies $H^c_{\tilde{a}}=H^c_{{a}}$ and the result follows.
\end{proof}
We end this section with the computations of the subordination equation for the multiplicative convolution for operator-valued conditional freeness, that is the conditional counterpart of equation \eqref{eqn:subordinationfixedpointeqn}
\begin{align*}
H^c_{ab} &= T^c_{a} \cdot ((\bm{\lambda}^{-1^{\#}} \# \bm{\rho})(T_y)\circ \frac{I}{1-I}\circ \bm{\lambda}(H_{ab})) \centerdot (T^c_y\cdot (\frac{I}{1-I}\circ \bm{\lambda}(H_{ab}))) \\
	      &=T^c_{a} \cdot (E\circ (\bm{\lambda}^{-1^{\#}} \# \bm{\rho})(T_y \cdot E)\circ \bm{\lambda}(H_{ab})) \centerdot (T^c_y\cdot (\frac{I}{1-I}\circ \bm{\lambda}(H_{ab}))) \\
	      &=T^c_{a} \cdot (E\circ (\bm{\lambda}^{-1^{\#}} \# \bm{\rho})(T_y \cdot E)\circ \bm{\lambda}(H_{b}\star_{\bm{\lambda}}H_{a\vdash b})) \centerdot (T^c_y\cdot (\frac{I}{1-I}\circ \bm{\lambda}(H_{b}\star_{\bm{\lambda}}H_{a\vdash b}))) \\
	      &=T^c_{a} \cdot (E\circ (\bm{\lambda}^{-1^{\#}} \# \bm{\rho})(T_y \cdot E)\circ \bm{\lambda}(H_{b}\star_{\bm{\lambda}}H_{a\vdash b})) \centerdot (H^c_b\cdot_{\bm{\lambda}}H_{a\vdash b}) \\
	      &=T^c_{a} \cdot (E\circ \bm{\rho}((T^{-1^{\centerdot}}_y \cdot E)^{-1^{\star_{\bm{\lambda}}}})\bm{\lambda}((T^{-1^{\centerdot}}_y \cdot E))\circ \bm{\lambda}(H_{b}\star_{\bm{\lambda}}H_{a\vdash b})) \centerdot (H^c_b\cdot_{\bm{\lambda}}H_{a\vdash b}) \\
	      &=T^c_{a} \cdot (E\circ \bm{\rho}(H_y)\circ\bm{\lambda}(H_y^{-1^{\star_{\bm{\lambda}}}})\circ \bm{\lambda}(H_{b}\star_{\bm{\lambda}}H_{a\vdash b})) \centerdot (H^c_b\cdot_{\bm{\lambda}}H_{a\vdash b})\\
	      &=T^c_{a} \cdot (E\circ \bm{\rho}(H_y)\circ\bm{\lambda}(H_{a\vdash b})) \centerdot (H^c_b\cdot_{\bm{\lambda}}H_{a\vdash b}) \\
	      &=T^c_{a} \cdot (E\circ \bm{\lambda}(H_{a\vdash b}\cdot_{\bm{\rho}}(H_y\cdot_{\bm{\lambda}}H_{a\vdash b})^{-1^{\star_{\bm{\rho}}}})\circ\bm{\rho}(H_y\cdot_{\bm{\lambda}}H_{a\vdash b})) \centerdot (H^c_b\cdot_{\bm{\lambda}}H_{a\vdash b}) && {\rm see\,\,} \eqref{eqn:matchinggroup} \label{eqn:usematching}\\
	      &=T^c_{a} \cdot (E\circ \bm{\lambda}(H_a)\circ\bm{\rho}(H_y\cdot_{\bm{\lambda}}H_{a\vdash b})) \centerdot (H^c_b\cdot_{\bm{\lambda}}H_{a\vdash b}) \\
	      &=T^c_{a} \cdot (E\circ \bm{\lambda}(H_a)\circ\bm{\rho}(H_y\cdot_{\bm{\lambda}}H_{a\vdash b})) \centerdot (H^c_b\cdot_{\bm{\lambda}}H_{a\vdash b}) \\
	      &=H^c_a\circ\bm{\rho}(H_y\cdot_{\bm{\lambda}}H_{a\vdash b})) \centerdot (H^c_b\cdot_{\bm{\lambda}}H_{a\vdash b})
\end{align*}
Note that we have used once again the defining relation of matching $\mathcal{O}$-operator.
We set $H_{a\dashv b}=H_y\cdot_{\bm{\lambda}}H_{a\vdash b}$. We have proved the following Theorem
\begin{theorem}
    \label{thm:subordinatoin}
    Let $a,b$ be two conditionally free random variables with $ \varphi(a)=\varphi(b)=1=\psi(a)=\psi(b)$, then 
\begin{equation*}
H^c_{ab} = (H^c_a\cdot_{\bm{\rho}}H_{a\dashv b}) \centerdot (H^c_b\cdot_{\bm{\lambda}}H_{a\vdash b})
\end{equation*}
and 
\begin{equation*}
H_{a\dashv b} = H_y \cdot_{\bm{\lambda}} ( H_a \cdot_{\bm{\rho}}H_{a\dashv b}),\quad H_{a\vdash b} = H_a \cdot_{\bm{\rho}} (H_y \cdot_{\bm{\lambda}} H_{a\vdash b})
\end{equation*}

\end{theorem}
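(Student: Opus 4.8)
The plan is to derive both assertions directly from the conditional $H$-transform factorization of Theorem \ref{thm:conditionalhtransform} and the conditional $T$-transform factorization of Theorem \ref{thm:factorizationconditionalttransform}, exploiting that the $H$-transform is obtained from the $T$-transform by precomposition with $E=\frac{I}{1-I}$. First I would pass to auxiliary conditionally free variables $\tilde a,\tilde b$ realizing the monotone pair, exactly as in the proof of Theorem \ref{thm:conditionalhtransform}, so that the multiplicative $T$-transform factorization becomes available; the moment identity \eqref{eqn:momentshc} guarantees that $H^c_{ab}$ depends only on the $\varphi$-moments and may therefore be computed in this conditionally free model.

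Next I would expand $H^c_{ab}=(T^c_{ab}\cdot E)\cdot_{\bm{\lambda}} H_{ab}$ after inserting the factorization $T^c_{ab}=(T^c_a\cdot_{\bm{\lambda}^{-1^{\#}}\#\bm{\rho}}T_b)\centerdot T^c_b$ supplied by Theorem \ref{thm:factorizationconditionalttransform}. The crucial manipulation is to treat the inner composite $\bm{\lambda}(H_a)\circ\bm{\rho}(H_b\cdot_{\bm{\lambda}}H_{a\vdash b})$ by invoking the Matching $\mathcal{O}$-operator relation \eqref{eqn:matchinggroup} for the pair $(\bm{\rho},\bm{\lambda})$: this swaps the order in which the two translations act and converts the free data into the subordination functions, mirroring at the conditional level what the free multiplicative subordination equation \eqref{eqn:subordinationfixedpointeqn} does unconditionally. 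Applying this once and substituting $H_{a\vdash b}=H_a\cdot_{\bm{\rho}}(H_b\cdot_{\bm{\lambda}}H_{a\vdash b})$ then collapses the composite to $\bm{\rho}(H_b\cdot_{\bm{\lambda}}H_{a\vdash b})$.

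Having reduced $H^c_{ab}$ to $H^c_a\circ\bm{\rho}(H_b\cdot_{\bm{\lambda}}H_{a\vdash b})\centerdot(H^c_b\cdot_{\bm{\lambda}}H_{a\vdash b})$, I would set $H_{a\dashv b}:=H_b\cdot_{\bm{\lambda}}H_{a\vdash b}$ and read off the factorization $H^c_{ab}=(H^c_a\cdot_{\bm{\rho}}H_{a\dashv b})\centerdot(H^c_b\cdot_{\bm{\lambda}}H_{a\vdash b})$. The fixed-point equation for $H_{a\vdash b}$ is then just the conditional reading of \eqref{eqn:subordinationfixedpointeqn}, while the one for $H_{a\dashv b}$ follows by inserting the definition $H_{a\dashv b}=H_b\cdot_{\bm{\lambda}}H_{a\vdash b}$ back into that same relation.

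I expect the main obstacle to be purely organizational rather than conceptual: one must track the interacting products $\centerdot,\circ,\star_{\bm{\lambda}},\star_{\bm{\rho}},\cdot_{\bm{\lambda}},\cdot_{\bm{\rho}}$ simultaneously and apply \eqref{eqn:matchinggroup} at precisely the right juncture, ensuring that the inverses $(\,\cdot\,)^{-1^{\star_{\bm{\lambda}}}}$ produced by the matching relation cancel against the factors supplied by the free subordination equation. Since \eqref{eqn:matchinggroup}, \eqref{eqn:subordinationfixedpointeqn} and Theorem \ref{thm:factorizationconditionalttransform} are already in hand, no genuinely new algebraic input is required beyond this careful bookkeeping and the reduction to the conditionally free model via \eqref{eqn:momentshc}.
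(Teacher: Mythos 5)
Your plan is, in its computational core, the same as the paper's: the paper also expands $H^c_{ab}=(T^c_{ab}\cdot E)\cdot_{\bm{\lambda}}H_{ab}$ via the factorization of Theorem \ref{thm:factorizationconditionalttransform}, uses the free subordination $H_{ab}=H_b\star_{\bm{\lambda}}H_{a\vdash b}$, applies the matching relation \eqref{eqn:matchinggroup} exactly once to convert $\bm{\rho}(H_b)\circ\bm{\lambda}(H_{a\vdash b})$ into $\bm{\lambda}(H_a)\circ\bm{\rho}(H_b\cdot_{\bm{\lambda}}H_{a\vdash b})$, absorbs $\bm{\lambda}(H_a)$ into $H^c_a$, and sets $H_{a\dashv b}:=H_b\cdot_{\bm{\lambda}}H_{a\vdash b}$; the two fixed-point equations are then read off from \eqref{eqn:subordinationfixedpointeqn} as you say. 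The one misstep is your opening paragraph: Theorem \ref{thm:subordinatoin} assumes $a,b$ are \emph{conditionally free}, so Theorem \ref{thm:factorizationconditionalttransform} applies to them directly and no passage to an auxiliary model is needed. The reduction via a conditionally free realization of a monotone pair (and the appeal to \eqref{eqn:momentshc} to transport $H^c$) is the device used in the proof of Theorem \ref{thm:conditionalhtransform}, and taken literally here it would be wrong, since replacing the conditionally free pair $(a,b)$ by a realization of a monotone pair changes the joint distribution and hence the law of $ab$. Dropping that paragraph leaves a correct argument coinciding with the paper's.
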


\bibliographystyle{plain}
\bibliography{biblio}
\end{document}